\theoremstyle{definition}
\newtheorem{theorem}{Theorem}[section]
\newtheorem{lemma}[theorem]{Lemma}
\newtheorem{proposition}[theorem]{Proposition}
\newtheorem{corollary}[theorem]{Corollary}
\theoremstyle{definition}
\newtheorem{definition}[theorem]{Definition}
\newtheorem{example}[theorem]{Example}
\newtheorem{remark}[theorem]{Remark}
\newtheorem{examples}[theorem]{Examples}
\newtheorem{remarks}[theorem]{Remarks}
\definecolor{blue-url}{RGB}{0,0,100}
\definecolor{red-url}{RGB}{100,0,0}
\definecolor{green-url}{RGB}{0,100,0}
\definecolor{light-yellow}{RGB}{255,255,128}
\definecolor{light-blue}{RGB}{193,255,255}
\definecolor{light-red}{RGB}{239,83,80}
\renewcommand{\emptyset}{\varnothing}
\renewcommand{\setminus}{\smallsetminus}
\renewcommand{\,}{\kern 0.1em}
\DeclareMathOperator{\diag}{diag}
\providecommand\llb{\llbracket}
\providecommand\rrb{\rrbracket}
\providecommand\sqeq{\sqsubseteq}
\providecommand\sqneq{\sqsubset}
\providecommand{\RR}{\mathbin{R}}
\newcommand{\evid}[1]{\textsf{#1}}
\newline\vspace{\abovedisplayskip}\hbox to \textwidth\bgroup\hss$\displaystyle}
\egroup\vspace{\belowdisplayskip}}
\DeclareFontFamily{OMX}{MnSymbolE}{}
\DeclareSymbolFont{MnLargeSymbols}{OMX}{MnSymbolE}{m}{n}
\DeclareFontShape{OMX}{MnSymbolE}{m}{n}{
	<-6>  MnSymbolE5
	<6-7>  MnSymbolE6
	<7-8>  MnSymbolE7
	<8-9>  MnSymbolE8
	<9-10> MnSymbolE9
	<10-12> MnSymbolE10
	<12->   MnSymbolE12
}{}
\DeclareFontShape{OMX}{MnSymbolE}{b}{n}{
	<-6>  MnSymbolE-Bold5
	<6-7>  MnSymbolE-Bold6
	<7-8>  MnSymbolE-Bold7
	<8-9>  MnSymbolE-Bold8
	<9-10> MnSymbolE-Bold9
	<10-12> MnSymbolE-Bold10
	<12->   MnSymbolE-Bold12
}{}
\let\llangle\@undefined
\let\rrangle\@undefined
\DeclareMathDelimiter{\llangle}{\mathopen}%
{MnLargeSymbols}{'164}{MnLargeSymbols}{'164}
\DeclareMathDelimiter{\rrangle}{\mathclose}%
{MnLargeSymbols}{'171}{MnLargeSymbols}{'171}
\begin{document}

\title{Factorization under Local Finiteness Conditions}
\author{Laura Cossu}
\address{(L.C.)~Institute of Mathematics and Scientific Computing, University of Graz | Heinrichstrasse 36/III, 8010 Graz, Austria}
\email{laura.cossu@uni-graz.at}
\urladdr{https://sites.google.com/view/laura-cossu/home-page}
\author{Salvatore Tringali}
\address{(S.T.)~School of Mathematical Sciences,
Hebei Normal University | Shijiazhuang, Hebei province, 050024 China}
\email{salvo.tringali@gmail.com}
\urladdr{http://imsc.uni-graz.at/tringali}
\subjclass[2020]{Primary 20M10, 20M13. Secondary 13A05, 16U30, 20M14}
%
%
%
%

\keywords{Atom; existence theorems; factorization; irreducible; finiteness conditions; minimal factorizations; monoid; preorder; ring.}

\begin{abstract}
\noindent{}It has been recently observed that fundamental aspects of the classical theory of factorization can be greatly generalized by combining the languages of monoids and preorders. This has led to various theorems on the existence of certain factorizations, herein called $\preceq$-factorizations, for the $\preceq$-non-units of a (multiplicatively written) monoid $H$ endowed with a preorder $\preceq$, where an element $u \in \allowbreak H$ is a $\preceq$-unit if $u \preceq 1_H \preceq u$ and a $\preceq$-non-unit otherwise. The ``building blocks'' of these factorizations are the $\preceq$-irreducibles of $H$ (i.e., the $\preceq$-non-units $a \in H$ that cannot be written as a product of two $\preceq$-non-units each of which is strictly $\preceq$-smaller than $a$); and it is interesting to look for sufficient conditions for the $\preceq$-factorizations of a $\preceq$-non-unit to be  bounded in length or finite in number (if measured or counted in a suitable way). This is precisely the kind of questions addressed in the present work, whose main novelty is the study of the interaction between minimal $\preceq$-factorizations (i.e., a refinement of $\preceq$-factorizations used to counter the ``blow-up phenomena'' that are inherent to factorization in non-commutative or non-cancellative monoids) and some finiteness conditions describing the ``local behaviour'' of the pair $(H, \preceq)$. Besides a number of examples and remarks, the paper includes many arithmetic results, a part of which are new already in the basic case where $\preceq$ is the divisibility preorder on $H$ (and hence in the setup of the classical theory). 
\end{abstract}

\maketitle \thispagestyle{empty}

\section{Introduction}
\label{sec:intro}
A number of problems in different areas of mathematics, herein generically named \emph{factorization problems}, revolve around the possibility or impossibility of expressing certain elements of a (multiplicatively written) monoid as a finite product of certain other elements, henceforth referred to as \emph{elementary factors}, that in a sense cannot be ``broken down into smaller pieces''.
One way of formalizing these ideas is to
combine the languages of monoids and preorders, as was recently done in \cite{Co-Tr-21(a), Tr20(c)} as part of a broader program \cite{Fa-Tr18, Tr19a, An-Tr18} aimed to enlarge the boundaries of the classical theory of factorization (see Remark \ref{rem:preorders}\ref{rem:preorders(1)} for a formal definition of what we mean here by this term),
where the elementary factors used all along the ``factorization process'' 
are most usually \evid{atoms} in the sense of P.\,M.~Cohn \cite[p.~587]{Co69} (i.e., non-units that do not factor as a product of two non-units) and the structures taken under consideration (from monoids of modules \cite{Fa02, Wi-Wi09, BaWi13} and monoids of ideals \cite{Ger-Kha22,HK98} to Krull domains and Krull monoids \cite{Ch-Ge97, Ge-Gr09, Ge13}, from 
rings of integer-valued polynomials \cite{Fr-Na-Ri19, Fad-Fri-Win22} to monoid algebras \cite{Cha-Fad-Win22, Fa-Wi22}, from numerical monoids \cite{As-GaSa16, Ge-Schm18, Ge-Schm19, Bl-GaSa-Ge11} to Puiseux monoids \cite{Go18, Ch-Go-Go19}, etc.) are, apart from rare exceptions, cancellative, if not even cancellative and commutative. 

In fact, it is only in very recent years that first significant steps have been made towards a systematic extension of the theory to monoids that need be neither commutative nor cancellative (and hence to rings that need not be domains), although most of the work in this direction has so far been limited to the \emph{unit-cancellative} case \cite{FGKT, Ge-Schw18, GeZh21, Bae-Sme21, Bie-Ger-22} (see Example \ref{exa:irrds-atoms-quarks}\ref{exa:irrds-atoms-quarks(1)} for further details). A major difficulty with factorization in non-unit-cancellative monoids, no matter whether commutative or not, is due to ``blow-up phenomena'' (triggered, e.g., by the presence of proper idempotents and, more generally, non-units of finite order) whose immediate effect is to make a number of (arithmetic) invariants of classical interest essentially meaningless; and analogous phenomena show up in cancellative but non-commutative monoids too (Example \ref{exa:non-atomic-2-generator-1-relator-canc-mon}). The question is extensively discussed in \cite[Sects.~1 and 4]{An-Tr18} and has led to the idea of replacing classical (atomic) factorizations with \emph{minimal} (atomic) factorizations in an effort to counter the issues arising from the departure from cancellativity: Skipping the details for the moment, the main point is again that preorders play a crucial role in the bigger picture.

Our goal in the present paper is to further develop the paradigm of minimal factorizations (Sect.~\ref{sec:minimal-factorizations}) and study fundamental aspects of the arithmetic of monoids that are, in a sense, ``approximately finitely generated on a local scale''. For, we will fully embrace the philosophy of \cite{Co-Tr-21(a), Tr20(c)} and work out most of our results in the language of \emph{premonoids} (that is, monoids endowed with a preorder). 

\subsection*{Plan of the paper.} After reviewing the key definitions of $\preceq$-[non-]unit, $\preceq$-ir\-re\-duc\-i\-ble, and $\preceq$-atom associated with a premonoid $\mathcal H = (H, \preceq)$ and recalling a few notions from the general theory of monoids (Sect. ~\ref{sec:premonoids}), we throw in preordered monoids, weakly (resp., strongly) positive monoids, etc.~(Definition \ref{def:preordered-monoids-et-alia}) and prove a converse (Proposition \ref{prop:FTF-inverso}) to the main theorem of \cite{Tr20(c)} on the existence of a $\preceq$-factorization (that is, a factorization into $\preceq$-irreducibles) for the $\preceq$-non-units of the monoid $H$ under the hypothesis that the preorder $\preceq$ is artinian (Theorem \ref{thm:abstract-factorization}). Next, we bring up [atomic] $\preceq$-fac\-tor\-i\-za\-tions and minimal [atomic] $\preceq$-factorizations and look for sufficient conditions for the [minimal] $\preceq$-factorizations of a $\preceq$-non-unit to be bounded in length or finite in number (if measured or counted in a proper way): This ultimately leads to BF-factorable premonoids, FmF-atomic premonoids, etc.~(Definition \ref{def:factorizations}) and to investigate the interplay between [minimal] [atomic] $\preceq$-factorizations and some finiteness conditions describing the ``local arithmetic'' of $\mathcal H$. Accordingly, we introduce l.f.g., [weakly] l.f.g.u., and loft premonoids (Definition \ref{def:lfgu}) and, among other things, we prove that 
\begin{enumerate*}[label=\textup{(\roman{*})}]
\item every l.f.g.u.~premonoid is weakly l.f.g.u.~(Proposition \ref{pro:f.g.u. is l.f.g.u. is w.l.f.g.u.}),
\item if $\mathcal H$ is a weakly l.f.g.u.~weakly positive monoid, then it is also factorable, i.e., every $\preceq$-non-unit factors as a product of $\preceq$-irreducibles (Corollary \ref{cor:wlfgu weakly positive is factorable}),
\item a loft premonoid is BF- or BmF-factorable if and only if it is, resp., FF- or FmF-factorable (Theorem \ref{thm:loft-BmF-is-FmF}),
\item if $\mathcal H$ is a weakly l.f.g.u.~weakly positive monoid such that every $\preceq$-irreducible is a $\preceq$-atom, then $\mathcal H$ is loft and hence FmF-atomic (Theorem \ref{thm: sufficient for loftness}), and \item every weakly l.f.g.u.~strongly positive monoid is FF-atomic (Corollary \ref{cor:weakly-lfgu-strongly-pos-is-FF-atom}).
\end{enumerate*} 

When $\preceq$ is the divisibility preorder, BF-atomicity and FF-atomicity recover (and generalize) the standard notions of BF-ness and FF-ness \cite[Definitions 1.3.1 and 1.5.1]{GeHK06}; BmF-atomicity and FmF-atomicity recover the notions of BmF-ness and FmF-ness first considered in \cite{An-Tr18}; and, in the com\-mu\-ta\-tive setting, the premonoid $\mathcal H$ is l.f.g.u.~if and only if the monoid $H$ is l.f.g.~after modding out the units (see Remark \ref{rem:lfgu-monoids}\ref{rem:lfgu-monoids(2)} for additional details). We can therefore translate the aforementioned results (on the arithmetic of premonoids) back into the language of the classical theory and hence obtain results that, especially in the case of non-commutative or non-unit-cancellative monoids, are completely new (Theorem \ref{thm:Dedekind-finite wlfgu FmF-atomic} and Corollary \ref{cor:acyclic-is-FmF-atomic}).
In this regard, it is worth remarking that, to a large extent, the classical theory of factorization is all about monoids $H$ that are ``(locally) arithmetically isomorphic'' to a cancellative, commutative, l.f.g.~monoid $K$: The basic idea fits with the abstract philosophy of ``transfer morphisms'' and, in the specific scenario of factorization, boils down to $H$ being \evid{essentially equimorphic} to $K$ in the sense of  \cite[Definition 3.2]{Tr19a}, by which we mean that there is
an atom-preserving monoid homomorphism $f \colon H \to K$ with $K = \allowbreak K^\times f(H)\, K^\times$ such that every atomic factorization (in $K$) of the image $f(x)$ of a non-unit $x \in H$ can be pulled back to an atomic factorization of $x$ 
(see \cite[Remarks 2.17--2.20]{Fa-Tr18} for a critical comparison with analogous definitions from the literature).
Formalities aside, the bottom line is that cancellative, commutative, l.f.g.~monoids have a prominent role in the classical theory: Due to their relative simplicity, they serve as ``canonical models'' for the study of much more complicated objects.
On the other hand, almost nothing is known about the arithmetic of non-commutative or non-unit-cancellative monoids that are not essentially equimorphic to a cancellative, commutative, l.f.g.~monoid, apart from the little that is known in very specific examples: For one thing, every finite monoid is finitely generated (f.g.), but a ma\-jor\-i\-ty of finite monoids are not even atomic and hence, by Remark \ref{rem:lfgu-monoids}\ref{rem:lfgu-monoids(2)}, cannot be essentially e\-qui\-mor\-phic to a cancellative, commutative, l.f.g.~monoid.

We conclude by observing that cancellative f.g.~monoids need not satisfy the ACCP, i.e., the ascending chain condition on principal two-sided ideals (Remark \ref{rem:ACCP} and Example \ref{exa:non-atomic-2-generator-1-relator-canc-mon}): This lies in stark contrast with the case of cancellative, \emph{commutative}, f.g.~monoids \cite[Proposition 2.7.8.4]{GeHK06} and shows, at the end of the day, that Corollary \ref{cor:wlfgu weakly positive is factorable} does not follow from Theorem \ref{thm:abstract-factorization} in any obvious way (Proposition \ref{prop:FTF-inverso} proves that there \emph{is} a way, but the result is more of theoretical interest than of any practical use). The situation is, however, quite different if we focus attention on the class of left (or right) duo monoids (Example \ref{exa:preord-mons}\ref{exa:preord-mons(2)}), which is exactly what we do in the second half of Sect.~\ref{sec:duo-mons} (most notably, we prove in Theorem \ref{thm:lfgu-left-duo-satisfies-ACCP} that every left duo, l.f.g.u.~monoid satisfies the \textup{ACCP}).

Besides a number of examples and remarks that will gently guide the reader through, the paper contains a stack of new ideas, some of which mark a kind of discontinuity with the past (leaving aside the switch from monoids to premonoids in the wake of \cite{Co-Tr-21(a), Tr20(c)}): These include the role of weakly l.f.g.u.~monoids and germs (Definition \ref{def:lfgu}\ref{def:lfgu(1)}) in the ``local analysis'' of arithmetic properties (to the contrary of the classical theory, where the same role is rather played by divisor-closed submonoids); the introduction of weakly positive monoids (as opposite to the stronger notion of preordered monoid) to ``encode'' the arithmetic of (Dedekind-finite) monoids through the language of premonoids (Example \ref{exa:preord-mons}\ref{exa:preord-mons(1)}); and the first-ever applications to factorization theory of a combinatorial result of G.~Higman (see Theorem \ref{thm:higman} and the comments after Lemma \ref{lem:pseudo-commutativity-in-duo-monoids}) that can be regarded as a non-commutative extension of a well-known lemma of L.\,E.~Dickson which is, in turn, an old ac\-quain\-tance of practitioners in the field (see, e.g., Theorem 1.5.3 in \cite{GeHK06}).

\section{Monoids and preorders}
\label{sec:premonoids}

Throughout, $H$ is a multiplicatively written monoid with identity $1_H$ (e.g., the multiplicative monoid of a ring). 
Undefined terminology and notation are either standard or borrowed from \cite[Sect.~2]{Tr20(c)}. In particular, 
we address the reader to Howie's monograph \cite{Ho95} for basic aspects of semigroup theory. We use $H^\times$ for the \evid{group of units} of $H$ and $\langle X \rangle_H$ for the \evid{submonoid} of $H$ \evid{generated} by a set $X \subseteq H$. Accordingly, we say that $H$ is \evid{reduced} if $H^\times$ is trivial (i.e., the only unit is the identity).

Let $\preceq$ a preorder (i.e., a reflexive and transitive binary relation) on (the carrier set of) $H$; in the parlance of \cite[Definitions 3.2 and 3.4]{Tr20(c)}, we refer to the pair $\mathcal H := (H, \preceq)$ as a \evid{premonoid} and say $x \in H$ is \evid{$\preceq$-equivalent} to $y \in H$ if $x \preceq y \preceq x$ (of course, being $\preceq$-equivalent is an equivalence on $H$). The binary relation $\mid_H$ defined by $x \mid_H y$ if and only if $x \in H$ and $y \in HxH$ is in fact a preorder of the utmost importance in the study of factorization: We call $\mid_H$ the \evid{divisibility preorder} (on $H$), let $x$ be a \evid{divisor} of $y$ in $H$ if $x \mid_H y$, and denote by $\llb x \rrb_H$ the \evid{smallest divisor-closed submonoid} of $H$ containing the element $x$.

An el\-e\-ment $u \in H$ is a \evid{$\preceq$-unit} if it is $\preceq$-equivalent to $1_H$; otherwise, $u$ is a \evid{$\preceq$-non-unit}. A $\preceq$-non-unit $a \in \allowbreak H$ is then a \evid{$\preceq$-quark} if there is no $\preceq$-non-unit $b$ with $b \prec a$ (i.e., $b \preceq a$ and $a \not\preceq b$); and is a \evid{$\preceq$-atom of degree $s$} (resp., a \evid{$\preceq$-ir\-re\-duc\-i\-ble} \evid{of degree $s$}), for a certain $s \in \mathbb N_{\ge 2}$, if $a \ne x_1 \cdots x_k$ for every $k \in \llb 2, s \rrb$ and all $\preceq$-non-units $x_1, \ldots, x_k \in H$ (resp., for all $\preceq$-non-units $x_1, \ldots, x_k \in H$ with $x_1 \prec a, \ldots, x_k \prec a$). In par\-tic\-u\-lar, we will simply refer to a $\preceq$-atom of degree $2$ as a \evid{$\preceq$-atom}; to a $\preceq$-ir\-re\-duc\-i\-ble of degree $2$ as a \evid{$\preceq$-ir\-re\-duc\-i\-ble} (note that, occasionally, the term may also be used as an adjective); and to a $\mid_H$-irreducible as an \evid{irreducible} (of $H$). The no\-tions of $\preceq$-[non-]unit, $\preceq$-quark, $\preceq$-atom, and $\preceq$-ir\-red\-u\-ci\-ble were introduced in \cite[Definition 3.6]{Tr20(c)},
while $\preceq$-ir\-re\-duc\-i\-bles of higher degree were first considered in \cite[Definition 3.1]{Co-Tr-21(a)}. We write $\mathcal H^\times$ for the set of $\preceq$-units, $\mathscr A(\mathcal H)$ for the set of $\preceq$-atoms, and $\mathscr I(\mathcal H)$ for the set of $\preceq$-irreducibles. Moreover, given $x \in H$, we denote by $\mathscr I_x(\mathcal H)$ the set of $\preceq_H$-irreducibles $a$ such that $a \mid_H x$; and by $\mathscr A_x(\mathcal H)$ the intersection of $\mathscr I_x(\mathcal H)$ with $\mathscr A(\mathcal H)$.

\begin{remarks}\label{rem:preorders}
\begin{enumerate*}[label=\textup{(\arabic{*})}, mode=unboxed]
\item\label{rem:preorders(1)} Roughly speaking, the classical theory of factorization comes down to the case where $H$ is a \evid{Dedekind-finite} monoid (i.e., the product of any two non-units is a non-unit) and $\preceq$ is the divisibility preorder $\mid_H$ on $H$: A key observation in this regard is that, under the hypothesis of Dedekind-finiteness, a $\mid_H$-unit is an (ordinary) unit of $H$ and hence a $\mid_H$-atom is an (ordinary) atom, cf.~\cite[Remark 3.7]{Tr20(c)}.
\end{enumerate*}

\vskip 0.05cm

\begin{enumerate*}[label=\textup{(\arabic{*})}, mode=unboxed,resume]
\item\label{rem:preorders(2)} Given a premonoid $\mathcal{H}=(H,\preceq)$, we let a \evid{subpremonoid} of $\mathcal H$ be a premonoid $\mathcal K = (K, \preceq_K)$ such that $K$ is a submonoid of $H$ and $\preceq_K$ is the \evid{restriction} of $\preceq$ to $K$ (i.e., the binary relation on $K$ defined by taking $x \preceq_K y$ if and only if $x, y \in K$ and $x \preceq y$): In particular, we denote by $\llb x \rrb_\mathcal{H}$ the subpremonoid of $\mathcal{H}$ whose ``ground monoid'' is the smallest divisor-closed submonoid $\llb x \rrb_H$ of $H$ containing an element $x \in H$. It follows from the definitions that the $\preceq_K$-units of $K$ are exactly the $\preceq$-units of $H$ that lie in $K$, viz., $\mathcal{K}^\times = K \cap \mathcal{H}^\times$: In fact, we have $1_K = 1_H$ (since $K$ is a submonoid of $H$) and hence $u\in K$ is a $\preceq_K$-unit if and only if $1_H\preceq u\preceq 1_H$.
\end{enumerate*}

\vskip 0.05cm

\begin{enumerate*}[label=\textup{(\arabic{*})}, mode=unboxed,resume]
\item\label{rem:preorders(3)} In the notation of item \ref{rem:preorders(2)}, it is worth noting that the restriction to $K$ of the divisibility preorder $\mid_H$ on $H$ is not necessarily the divisibility preorder $\mid_K$ on $K$. This happens, e.g., if $K$ is a divisor-closed submonoid of $H$: If $a,b\in K$ and $a\mid_H b$, then there exist $x, y\in H$ such that $b = xay$; and $K$ being divisor-closed, $x$ and $y$ must lie in $K$ and hence $a \mid_K b$. 
\end{enumerate*}
\end{remarks}

Some simple examples will help us illustrate the notions we have so far introduced: The first of them will also come in handy in Sect.~\ref{sec:duo-mons}, where we focus attention on divisibility.

\begin{examples}
\label{exa:irrds-atoms-quarks}
\begin{enumerate*}[label=\textup{(\arabic{*})}, mode=unboxed]
\item\label{exa:irrds-atoms-quarks(1)}
Understanding the interrelation between the $\preceq$-ir\-re\-duc\-i\-bles, the $\preceq$-atoms, and the $\preceq$-quarks of a monoid $H$, for a given preorder $\preceq$ on $H$, is often pivotal to a deeper comprehension of various phenomena. For instance, it is obvious that $\preceq$-atoms and $\preceq$-quarks are all $\preceq$-ir\-re\-duc\-i\-bles. Yet, a $\preceq$-ir\-red\-u\-ci\-ble need not be either a $\preceq$-atom or a $\preceq$-quark; and neither need a $\preceq$-quark be a $\preceq$-atom. Most notably, this remains true in the fundamental case when $\preceq$ is the divisibility preorder $\mid_H$, see \cite[Remark 3.7(4), Proposition 4.11(iii), and Theorem 4.12]{Tr20(c)}. If, however, the monoid $H$ is \evid{acyclic} in the sense of \cite[Definition 4.2]{Tr20(c)} (namely, $uxv \ne x$ for all $u, v, x \in H$ such that $u$ or $v$ is a non-unit), then we get from \cite[Corollary 4.4]{Tr20(c)} that $\mid_H$-ir\-re\-duc\-i\-bles, $\mid_H$-atoms, $\mid_H$-quarks, and (ordinary) atoms are all the same, an observation that will come in helpful in Corollary \ref{cor:acyclic-is-FmF-atomic}. In the meanwhile, it is worth noting that if $H$ is acyclic or cancellative, then it is also \evid{unit-cancellative} in the sense of \cite[Sect.~2.1, p.~256]{Fa-Tr18} (namely, $xy \ne x \ne yx$ for all $x,y \in H$ with $y \notin H^\times$); the converse need not be true (e.g., see Example \ref{exa:non-atomic-2-generator-1-relator-canc-mon}). Unit-cancellativity was first introduced, in the com\-mu\-ta\-tive setting, in \cite[Sect.~4, p.~72]{RoGaSaGaGa04} (although under a different name) and, independently, in \cite[Sect.~3]{FGKT}. Most literature on factorization in non-cancellative monoids has been so far limited to the unit-cancellative setting. The notions of acyclicity and unit-cancellativity coincide in the commutative setting; otherwise, they are different \cite[Example 4.8]{Tr20(c)}. 
\end{enumerate*}

\vskip 0.05cm

\begin{enumerate*}[label=\textup{(\arabic{*})}, mode=unboxed, resume]
\item\label{exa:irrds-atoms-quarks(2)}
Let $\mathscr P(S)$ be the premonoid obtained by endowing the power set of a set $S$ with the binary operation $\cup_S$ sending a pair of subsets of $S$ to their union and the inclusion order $\subseteq_S$ defined by $X \subseteq_S Y$ if and only if $X \subseteq Y \subseteq S$. It is immediate that the only $\subseteq_S$-unit is the empty set and hence the $\subseteq_S$-ir\-re\-duc\-i\-bles are the one-element subsets of $S$, which, in addition, are all $\subseteq_S$-quarks. On the other hand, the set of $\subseteq_S$-atoms is empty, because $X = X \cup X$ for every set $X$. 
\end{enumerate*}

\vskip 0.05cm

\begin{enumerate*}[label=\textup{(\arabic{*})}, mode=unboxed, resume]
\item\label{exa:irrds-atoms-quarks(3)}
Let $H$ be a monoid and $(K, \preceq)$ a premonoid. Following \cite[Example 3.3(2)]{Tr20(c)}, we define the \evid{pullback} of $\preceq$ \evid{through a function} $\phi \colon H \to K$ as the binary relation $\preceq_\phi$ on $H$ such that $x \preceq_\phi y$ if and only if $\phi(x) \preceq \allowbreak \phi(y)$. It is routine to check that $\preceq_\phi$ is a preorder on $H$, and we seek a ``sensible characterization'' of the $\preceq_\phi$-units, $\preceq_\phi$-ir\-re\-duc\-i\-bles, $\preceq_\phi$-atoms, and $\preceq_\phi$-quarks in terms of the $\preceq$-units, $\preceq$-ir\-re\-duc\-i\-bles, $\preceq$-atoms, and $\preceq$-quarks of $K$, resp. In general, this is unattainable. Assume, however, that $\phi$ is a monoid isomorphism from $H$ to $K$: An element $u \in H$ is then a $\preceq_\phi$-unit if and only if $\phi(u)$ is a $\preceq$-unit; and an element $a \in H$ is a $\preceq_\phi$-ir\-re\-duc\-i\-ble, a $\preceq_\phi$-atom, or a $\preceq_\phi$-quark if and only if $\phi(a)$ is, resp., a $\preceq$-ir\-re\-duc\-i\-ble, a $\preceq$-atom, or a $\preceq$-quark of $K$ (we leave the details to the reader).
\end{enumerate*}
\end{examples}

The reader will have noted by now that, in the definition of a premonoid $(H, \preceq)$, no compatibility between the operation of the monoid $H$ and the preorder $\preceq$ is assumed. This is intentional, because in general no compatibility is guaranteed, for instance, in the fundamental case of the divisibility preorder \cite[Example 3.5(1)]{Tr20(c)}. However, an interplay between $H$ and $\preceq$ (as weak as it may be) is necessary in certain applications, which leads us straight to:

\begin{definition}\label{def:preordered-monoids-et-alia}
A premonoid $\mathcal H = (H, \preceq)$ is a \evid{preordered} (resp., \evid{strongly preordered}) \evid{monoid} if $x \preceq y$ implies $uxv \preceq uyv$ (resp., $\mathcal H$ is preordered and $x \prec y$ implies $uxv \prec uyv$) for all $u, v \in H$; a \evid{positive} (resp., \evid{strongly positive}) \evid{monoid} if $\mathcal H$ is a preordered (resp., strongly preordered) monoid with the further property that $1_H \preceq H$; and a \evid{weakly positive monoid} if $\mathcal H^\times x \,\mathcal H^\times \preceq x \preceq H x H$ for every $x \in H$, where we write $x$ in place of $\{x\}$ and $A \preceq B$, for some $A, B \subseteq H$, means that $a \preceq b$ for all $a \in A$ and $b \in B$.
\end{definition}

In the next remarks, we list some properties of preordered or weakly (resp., strongly) positive monoids that will come in handy later (e.g., in the proofs of Theorem \ref{thm:fgu weakly positive} and Corollary \ref{cor:weakly-lfgu-strongly-pos-is-FF-atom}), while the sub\-se\-quent examples will further clarify why, in a sense, the definition of a premonoid \emph{has to} be so general.

\begin{remarks}
\label{rem:premonoids}
\begin{enumerate*}[label=\textup{(\arabic{*})}, mode=unboxed]
\item\label{rem:premonoids(1)}
Of course, every strongly preordered (resp., strongly positive) monoid is also a preordered (resp., positive) monoid. In general, neither of these implications can be reversed. However, if $\mathcal H = (H, \preceq)$ is a preordered monoid with the additional property that $H$ is cancellative and $\preceq$ is an order (i.e., an antisymmetric preorder), then $\mathcal H$ is in fact a strongly preordered monoid: If $x \prec y$ and $u, v \in \allowbreak H$, then $uxv \preceq uyv$ (since $\mathcal H$ is a preordered monoid); and if the latter inequality is \emph{not} strict, then $uxv = \allowbreak uyv$ (since $\preceq$ is an order) and hence $x = y$ (since $H$ is cancellative), which is a contradiction.
\end{enumerate*}

\vskip 0.05cm

\begin{enumerate*}[label=\textup{(\arabic{*})}, mode=unboxed, resume]
\item\label{rem:premonoids(2)} Given a preordered monoid $\mathcal{H}=(H, \preceq)$, it is found (by induction on $n$) that, if $x_1 \preceq y_1, \ldots, x_n \preceq y_n$, then $x_1 \cdots x_n \preceq y_1 \cdots y_n$. It follows that, if $1_H \preceq y_i$ for each $i \in \allowbreak \llb 1, n \rrb$ and $\sigma$ is a strictly increasing func\-tion $\llb 1, k \rrb \to \llb 1, n \rrb$ (with $k \in \mathbb N$), then $
1_H \preceq \prod_{i=1}^k y_{\sigma(i)} \preceq y_1 \cdots y_n$,
with strict inequality on the left if and only if $1_H \prec y_{\sigma(i)}$ for some $i \in \llb 1, k \rrb$, and strict inequality on the right if $\mathcal H$ is a strongly positive monoid, $y_i$ is a $\preceq$-non-unit for all $i \in \llb 1, n \rrb$, and $k < n$ (we leave the details to the reader). In particular, this yields that, if $\mathcal{H}$ is strongly positive, then  every $\preceq$-irreducible is a $\preceq$-atom and hence $\mathscr I(\mathcal H) = \mathscr A(\mathcal H)$.
\end{enumerate*}
\vskip 0.05cm
\begin{enumerate*}[label=\textup{(\arabic{*})}, resume, mode=unboxed]
\item\label{rem:premonoids(3)} If $\mathcal H = (H, \preceq)$ is a weakly positive monoid, then $1_H \preceq H 1_H H = H$ and the product of any two $\preceq$-units is itself a $\preceq$-unit (i.e., $\mathcal H^\times$ is a submonoid of $H$). Also, the set of $\preceq$-non-units is a two-sided ideal of $H$ (i.e., a subset $\mathfrak i$ of $H$ with the property that $H \mathfrak i\, H \subseteq \mathfrak i$): In fact, if $u$, $v$, and $x$ are elements of $H$ with $x \notin \mathcal H^\times$ and $uxv$ is a $\preceq$-unit, then $uxv \preceq x (uxv) \preceq \allowbreak x \preceq \allowbreak uxv$ and hence $x \in \mathcal H^\times$ (absurd). 
\end{enumerate*}
\vskip 0.05cm
\begin{enumerate*}[label=\textup{(\arabic{*})}, resume, mode=unboxed]
\item\label{rem:premonoids(4)} A subpremonoid $\mathcal K = (K, \preceq_K)$ of a weakly positive monoid $\mathcal H = (H, \preceq)$ is itself a weakly positive monoid. In fact, we have from Remark \ref{rem:preorders}\ref{rem:preorders(2)} that every $\preceq_K$-unit is a $\preceq$-unit. So, $\mathcal H$ being a weakly positive monoid implies that $\mathcal K^\times x \,\mathcal K^\times \preceq x \preceq KxK$ for every $x \in K$, which shows in turn that $\mathcal K^\times x \,\mathcal K^\times \preceq_K x \preceq_K KxK$ (and finishes the proof) because $\mathcal K^\times x \,\mathcal K^\times \subseteq KxK \subseteq K$.\\

\indent{}It is even easier to prove that a subpremonoid of a preordered (resp., strongly preordered) monoid is still a preordered (resp., strongly preordered) monoid. In particular, a subpremonoid of a positive (resp., strongly positive) monoid is positive (resp., strongly positive).
\end{enumerate*}
\vskip 0.05cm
\begin{enumerate*}[label=\textup{(\arabic{*})}, resume, mode=unboxed]
\item\label{rem:premonoids(5)} If $\mathcal H = (H, \preceq)$ is a positive monoid and $u \in H$ is an (ordinary) unit, then $1_H \preceq u^{-1}$ and hence $1_H \preceq \allowbreak u = \allowbreak 1_H u \preceq \allowbreak u^{-1} u = 1_H$, which ultimately proves that $H^\times \subseteq \mathcal{H}^\times$. Moreover, $\mathcal H$ is a weakly positive monoid. For, we already know from item \ref{rem:premonoids(2)} that $x \preceq HxH$ for every $x \in H$ (recall that, in a positive monoid, $1_H \preceq H$), and it remains to see that $\mathcal H^\times x \, \mathcal H^\times \preceq x$. For, let $u,v\in \mathcal{H}^\times$. Since $u\preceq 1_H$, then $ux \preceq \allowbreak x$ and $uxv \preceq xv$. But $v \preceq 1_H$ implies $xv \preceq x$, and hence $uxv \preceq x$ (as wished).
\end{enumerate*}
\end{remarks}

\begin{examples}
\label{exa:preord-mons}
\begin{enumerate*}[label=\textup{(\arabic{*})}, mode=unboxed]
\item\label{exa:preord-mons(1)}
The divisibility preorder $\mid_H$ on a monoid $H$ has the obvious property that $x \mid_H HxH$ for all $x \in H$ and hence $1_H \mid_H H$. Thus it follows by Remark \ref{rem:preorders}\ref{rem:preorders(1)} that if $H$ is Dedekind-finite, then $(H,\mid_H)$ is a weakly positive monoid. The converse, however, need not be true.\\

\indent{}For, let $M$ be a non-Dedekind-finite monoid so that we can pick $x,y\in M$ with $xy=1_M\ne yx$, and define $H$ as the submonoid of $M$ generated by $x$ and $y$. It is evident that $H$ is not Dedekind-finite, and we get from \cite[Remark 4.9(2)]{Tr20(c)} that $H=HzH$ for every $z\in H$. In consequence, $(H, \mid_H)$ is a positive monoid (with the additional property that every element is a $\mid_H$-unit), which suffices to conclude because every positive monoid is weakly positive by Remark \ref{rem:premonoids}\ref{rem:premonoids(5)}.
\end{enumerate*}

\vskip 0.05cm

\begin{enumerate*}[label=\textup{(\arabic{*})}, mode=unboxed, resume]
\item\label{exa:preord-mons(2)}
In analogy with the case of rings (see, e.g., \cite{Mark04} and references therein), a monoid $H$ is called \evid{left} (resp., \evid{right}) \evid{duo} if $aH \subseteq Ha$ (resp., $Ha \subseteq aH$) for all $a \in H$; and is \evid{duo} if it is left and right duo. Duo monoids (also known as \emph{normal} or \emph{normalizing} monoids) and, more generally, ``one-sided duo'' monoids have much in common with commutative monoids (every commutative monoid is obviously duo) and we will come back to them in Sect.~\ref{sec:duo-mons}. Here we just note, in complement to item \ref{exa:preord-mons(1)}, that the Dedekind-finiteness of the monoid $H$ is not enough for the pair $(H, \mid_H)$ to be a pre\-ordered and hence positive monoid (see, e.g., Example 3.5(1) in \cite{Tr20(c)}): A sufficient condition for this to happen is that $H$ is left or right duo, for then $u \mid_H x$ and $v \mid_H y$ imply $xy \in HuH \cdot HvH \subseteq HuvH$ and hence $uv \mid_H xy$. If, in addition, $H$ is commutative and unit-cancellative, then $(H, \mid_H)$ is strongly preordered and hence strongly positive (we leave the details to the reader).
\end{enumerate*}

\vskip 0.05cm

\begin{enumerate*}[label=\textup{(\arabic{*})}, mode=unboxed, resume]
\item\label{exa:preord-mons(3)}
Let $G$ be a \evid{totally orderable} group (written multiplicatively), by which we mean that there exists a \emph{total} order $\preceq$ on $G$ such that the pair $\mathcal G = (G, \preceq)$ is a preordered monoid; for an extensive list of totally orderable groups, see \cite{Lin-Ret-Rol09} and references therein. The set $\{x \in G \colon 1_G \preceq x\}$, endowed with the restriction of the order $\preceq$, is then a subpremonoid of $\mathcal G$, herein denoted by $\mathrm{Con}(\mathcal G)$ and called the \evid{non-negative cone} of $\mathcal G$. It is routine to check that $\mathrm{Con}(\mathcal G)$ is, in fact, a strongly positive monoid (note that every submonoid of a group is cancellative and hence Remark \ref{rem:premonoids}\ref{rem:premonoids(1)} applies).
\end{enumerate*}
\end{examples}

Following \cite[Definitions 3.8 and 3.11]{Tr20(c)}, we say that a preorder $\preceq$ on a monoid $H$ is \evid{artinian}, or $H$ is a \evid{$\preceq$-artinian} monoid, if there is no infinite sequence $x_1, x_2, \ldots$ of elements of $H$ with $x_{n+1} \prec x_n$ for each $n \in \mathbb N^+$; and we take the \evid{$\preceq$-height} of an element $x \in H$ to be the supremum of the set of all $n \in \allowbreak \mathbb N^+$ for which there are $\preceq$-non-units $x_1, \ldots, \allowbreak x_n \in \allowbreak H$ with $x_1 = x$ and $x_{k+1} \prec x_k$ for each $k \in \llb 1, n-1 \rrb$ (with the understanding that $\sup \emptyset := 0$). We call $H$ a \evid{strongly $\preceq$-artinian} monoid, or $\preceq$ a \evid{strongly artinian} preorder on $H$, if every element of $H$ has a finite $\preceq$-height.

\begin{remark}\label{rem:ACCP} 
A monoid $H$ is said to satisfy the \evid{ascending chain condition} (ACC) \evid{on principal two-sided ideals} (ACCP) if there is no (infinite) sequence $x_1, x_2, \ldots$ of elements of $H$ such that $Hx_i H \subsetneq Hx_{i+1} H$ for each $i \in \mathbb N^+$. As already observed in \cite[Remark 3.9.4]{Tr20(c)}, $H$ satisfies the ACCP if and only if the di\-vis\-i\-bil\-i\-ty preorder $\mid_H$ is artinian. Thus, we get from Remark \ref{rem:preorders}\ref{rem:preorders(3)} that $H$ satisfies the ACCP if and only if so does every divisor-closed submonoid of $H$, if and only if so does $\llb x \rrb_H$ for every $x \in H$. 

For, note in particular that, if $x_1, x_2, \ldots$ is a sequence such that $x_{i+1} \mid_H x_i$ for each $i \in \mathbb N^+$, then $x_1, x_2, \ldots$ lie all in $\llb x_1 \rrb_H$ and, in fact, $x_{i+1} \mid_{\llb x_1 \rrb_H} x_i$ for every $i \in \mathbb N^+$.
So, if $\llb x \rrb_H$ satisfies the ACCP for all $x \in H$, then $x_i \mid_{\llb x_1 \rrb_H} x_{i+1}$ and hence $x_i \mid_H x_{i+1}$ for all large $i \in \mathbb N^+$, which shows $H$ is $\mid_H$-artinian.
\end{remark}

The significance of these definitions is related to the next result and its refinements \cite[Theorems 3.4 and 3.5]{Co-Tr-21(a)}, which can be effectively applied to a variety of situations where the goal is merely to prove the \emph{existence} of certain factorizations, decompositions, etc.

\begin{theorem}\label{thm:abstract-factorization}
If $H$ is a $\preceq$-artinian monoid and $s$ is an integer $\ge 2$, then every $\preceq$-non-unit $x$ fac\-tors as a product of $s^{\mathrm{ht}(x) - 1}$ or fewer $\preceq$-ir\-re\-duc\-i\-bles of degree $s$, where $\mathrm{ht}(x)$ is the $\preceq$-height of $x$.
\end{theorem}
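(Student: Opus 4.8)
The plan is to argue by well-founded (Noetherian) induction along $\prec$, a principle licensed precisely by the $\preceq$-artinian hypothesis: since there is no infinite sequence $x_1 \succ x_2 \succ \cdots$ of elements of $H$, the strict relation $\prec$ is well-founded, and so to establish the claim for a given $\preceq$-non-unit $x$ it suffices to assume it already holds for every $\preceq$-non-unit $y$ with $y \prec x$. Before running the induction, I would isolate the single monotonicity fact on which everything hinges, namely that a $\preceq$-non-unit $y \prec x$ satisfies $\mathrm{ht}(x) \ge \mathrm{ht}(y) + 1$.

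To see this, note first that $\prec$ is transitive, being the strict part of a preorder (if $z \preceq y \preceq x$ and $x \preceq z$, then $x \preceq y$, contradicting $y \prec x$). Hence any chain of $\preceq$-non-units $y = y_1 \succ \cdots \succ y_n$ witnessing a value $n \le \mathrm{ht}(y)$ can be prolonged to the chain $x \succ y_1 \succ \cdots \succ y_n$ of $\preceq$-non-units, so that $\mathrm{ht}(x) \ge n+1$; taking the supremum over all admissible $n$ yields $\mathrm{ht}(x) \ge \mathrm{ht}(y) + 1$, with the convention that this reads $\infty \ge \infty$ when $\mathrm{ht}(y) = \infty$. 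In the regime relevant to the numerical estimate we thus have $\mathrm{ht}(y) \le \mathrm{ht}(x) - 1$.

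For the inductive step, fix a $\preceq$-non-unit $x$ and write $h := \mathrm{ht}(x)$, observing $h \ge 1$ since the one-term chain $x$ already witnesses $\mathrm{ht}(x) \ge 1$. If $x$ is a $\preceq$-irreducible of degree $s$ — and this covers the $\prec$-minimal $\preceq$-non-units, i.e.\ the $\preceq$-quarks, for which no such decomposition exists and the hypothesis is vacuous — then $x$ is a product of a single $\preceq$-irreducible of degree $s$, and $1 = s^0 \le s^{h-1}$ disposes of this case. Otherwise, by the very definition of $\preceq$-irreducibility of degree $s$ there are $k \in \llb 2, s \rrb$ and $\preceq$-non-units $x_1, \ldots, x_k$ with $x = x_1 \cdots x_k$ and $x_i \prec x$ for each $i$; in particular $h \ge 2$, since $x \succ x_1$ exhibits a two-term chain of $\preceq$-non-units. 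By the monotonicity fact $\mathrm{ht}(x_i) \le h-1$, so the induction hypothesis factors each $x_i$ into at most $s^{\mathrm{ht}(x_i)-1} \le s^{h-2}$ $\preceq$-irreducibles of degree $s$. Concatenating these $k$ factorizations writes $x$ as a product of at most $k\, s^{h-2} \le s \cdot s^{h-2} = s^{h-1}$ such irreducibles, closing the induction.

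I expect no deep obstacle: once the framework is set, the argument is bookkeeping, the decisive numerical step being merely $k\, s^{h-2} \le s^{h-1}$ for $2 \le k \le s$ and $h \ge 2$. The one point demanding care is the handling of heights. I would state the inequality $\mathrm{ht}(x) \ge \mathrm{ht}(y)+1$ at the level of suprema so that it stays correct, and the asserted bound stays (vacuously) valid, when some height is infinite — a genuine possibility, since $\preceq$-artinianness forbids infinite descending chains but not arbitrarily long finite ones. Correspondingly, I would take the base of the recursion to be the $\prec$-minimal $\preceq$-non-units rather than rely on any a priori finiteness of $\mathrm{ht}(x)$, the well-foundedness of $\prec$ guaranteeing termination and hence a genuinely \emph{finite} product in all cases.
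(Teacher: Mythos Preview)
The paper does not actually prove this theorem: it is stated as Theorem~\ref{thm:abstract-factorization} and attributed to prior work (the main theorem of \cite{Tr20(c)}, with refinements in \cite{Co-Tr-21(a)}), and is used throughout as a black box. So there is no in-paper proof to compare against.

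That said, your argument is correct and is essentially the expected one: well-founded induction along $\prec$, using that a $\preceq$-non-unit which is not a degree-$s$ $\preceq$-irreducible splits as $x = x_1 \cdots x_k$ with $2 \le k \le s$ and each $x_i \prec x$, together with the height drop $\mathrm{ht}(x_i) \le \mathrm{ht}(x) - 1$, gives the bound $k\,s^{\mathrm{ht}(x)-2} \le s^{\mathrm{ht}(x)-1}$. Your handling of the edge cases is sound: the transitivity check for $\prec$ is fine, the base case correctly absorbs $\preceq$-quarks, and you are right to run the induction on the well-founded relation $\prec$ itself rather than on the height, so that the conclusion (a finite factorization) holds even when $\mathrm{ht}(x) = \infty$ and the numerical bound is vacuous. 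This is, as far as one can tell from the references, the same strategy as in \cite{Tr20(c)}.
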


To date, applications of Theorem \ref{thm:abstract-factorization} include a generalization to unit-cancellative monoids \cite[Corollary 4.6]{Tr20(c)} of a classical theorem of Cohn \cite[Proposition 0.9.3]{Co06} on \emph{atomic factorizations} (i.e., factorizations into atoms) in cancellative monoids; a non-commutative generalization \cite[Corollary 4.1]{Tr20(c)} of a factorization theorem of D.\,D.~Anderson and S.~Valdes-Leon \cite[Theorem 3.2]{AnVL96} for commutative rings; a refinement \cite[Proposition 4.11 and Theorem 4.12]{Tr20(c)} of a characterization theorem of A.\,A.~Antoniou and Tringali \cite[Theorem 3.9]{An-Tr18} on atomic factorizations in various ``monoids of sets'' naturally arising from arithmetic combinatorics (see Example \ref{exa:lfgu-premonoids}\ref{exa:lfgu-premonoids(2)} and references therein); and a quantitative strengthening \cite[Theorem 5.19]{Co-Tr-21(a)} of the classical theorems of J.\,A.~Erdos \cite{Er68}, R.\,J.\,H.~Dawlings \cite{Da81}, T.\,J.~Laffey \cite[Theorem 1]{Laf83}, and J.~Fountain \cite[Theorem 4.6]{Fo91} on \emph{idempotent fac\-tor\-i\-za\-tions} (i.e., factorizations into idempotents) in the multiplicative monoid of the ring of $n$-by-$n$ matrices over a skew field or a commutative DVD. 

By and large, the object of the present paper is to discuss further applications of the same ideas. We begin with a result that can be viewed as a sort of converse to Theorem \ref{thm:abstract-factorization}.

\begin{proposition}\label{prop:FTF-inverso}
Let $A$ and $S$ be subsets of a monoid $H$ with $1_H \notin A \cup S$. The following are equivalent:
\begin{enumerate}[label=\textup{(\alph{*})}]
    \item\label{prop:FTF-inverso(a)}  Every element of $S$ factors as a non-empty product of elements of $A$.
    \item\label{prop:FTF-inverso(b)} There exists a strongly artinian preorder $\preceq$ on $H$ such that every $x \in S$ is a $\preceq$-non-unit and an element $a \in H$ is $\preceq$-ir\-re\-duc\-i\-ble if and only if it is a $\preceq$-quark, if and only if $a \in A$.
    \item\label{prop:FTF-inverso(c)} There exists an artinian preorder $\preceq$ on $H$ such that every $x \in S$ is a $\preceq$-non-unit and every $\preceq$-ir\-re\-duc\-i\-ble is an element of $A$.
\end{enumerate}
\end{proposition}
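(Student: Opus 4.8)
The plan is to establish the cyclic chain of implications \ref{prop:FTF-inverso(a)} $\Rightarrow$ \ref{prop:FTF-inverso(b)} $\Rightarrow$ \ref{prop:FTF-inverso(c)} $\Rightarrow$ \ref{prop:FTF-inverso(a)}, with the last implication being the easy one. Indeed, given an artinian preorder $\preceq$ as in \ref{prop:FTF-inverso(c)}, I would apply Theorem \ref{thm:abstract-factorization} with $s = 2$: every $x \in S$ is a $\preceq$-non-unit, hence factors as a product of finitely many $\preceq$-irreducibles of degree $2$, that is, of $\preceq$-irreducibles; each such factor lies in $A$ by hypothesis, and the product is non-empty because a $\preceq$-non-unit is never equal to $1_H$. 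This is exactly \ref{prop:FTF-inverso(a)}, so the substance of the proof lies in the first two implications.

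For \ref{prop:FTF-inverso(a)} $\Rightarrow$ \ref{prop:FTF-inverso(b)} I would manufacture a preorder out of factorization lengths. Define $\ell \colon H \to \mathbb N$ by letting $\ell(x)$ be the least $k \in \mathbb N$ for which $x = a_1 \cdots a_k$ with $a_1, \ldots, a_k \in A$ (the empty product being $1_H$), and $\ell(x) := 0$ when $x$ admits no such factorization; then set $x \preceq y$ if and only if $\ell(x) \le \ell(y)$. As a pullback of the natural order on $\mathbb N$, this $\preceq$ is automatically a preorder whose $\preceq$-units are precisely the elements $x$ with $\ell(x) = 0$, i.e.\ (using $1_H \notin A$) the element $1_H$ together with the elements outside $\langle A \rangle_H$. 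The routine checks are then: every $x \in S$ satisfies $\ell(x) \ge 1$ by \ref{prop:FTF-inverso(a)} and $1_H \notin S$, so is a $\preceq$-non-unit; every $a \in A$ satisfies $\ell(a) = 1$ by $1_H \notin A$, and any $b \prec a$ forces $\ell(b) = 0$, hence $b$ is a $\preceq$-unit, so $a$ is a $\preceq$-quark. Strong artinianness is immediate, since along any chain of $\preceq$-non-units $x = x_1 \succ x_2 \succ \cdots$ the positive integers $\ell(x_i)$ strictly decrease, whence $\mathrm{ht}(x) \le \ell(x) < \infty$.

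The heart of \ref{prop:FTF-inverso(a)} $\Rightarrow$ \ref{prop:FTF-inverso(b)}, and the step I expect to be the main obstacle, is the identification $A = \{\preceq\text{-quarks}\} = \{\preceq\text{-irreducibles}\}$, which I would obtain by a squeeze: $\preceq$-quarks are always $\preceq$-irreducible (Example \ref{exa:irrds-atoms-quarks}\ref{exa:irrds-atoms-quarks(1)}) and $A \subseteq \{\preceq\text{-quarks}\}$ by the previous paragraph, so it is enough to show that every $\preceq$-irreducible belongs to $A$; the inclusions $A \subseteq \{\preceq\text{-quarks}\} \subseteq \{\preceq\text{-irreducibles}\} \subseteq A$ then collapse to equalities. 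Given a $\preceq$-irreducible $c$, it is a $\preceq$-non-unit, so $m := \ell(c) \ge 1$, and I claim $m = 1$. If $m \ge 2$, then splitting a shortest factorization as $c = a_1 \cdot (a_2 \cdots a_m)$ writes $c = x_1 x_2$ with $x_1, x_2$ both $\preceq$-non-units, $\ell(x_1) = 1 < m$ and $1 \le \ell(x_2) \le m - 1 < m$ (the lower bound holds because $x_2 = 1_H$ would force $\ell(c) \le 1$), so $x_1 \prec c$ and $x_2 \prec c$, contradicting the $\preceq$-irreducibility of $c$. Hence $m = 1$ and $c \in A$.

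Finally, \ref{prop:FTF-inverso(b)} $\Rightarrow$ \ref{prop:FTF-inverso(c)} reuses the very same preorder furnished by \ref{prop:FTF-inverso(b)}: its $\preceq$-irreducibles all lie in $A$ (which is weaker than the characterization in \ref{prop:FTF-inverso(b)}) and every $x \in S$ is already a $\preceq$-non-unit, so the only real content is that a strongly artinian preorder is artinian, a further point that requires care because the $\preceq$-height counts only chains of $\preceq$-non-units whereas artinianness forbids arbitrary strictly descending chains. The fix is the observation that such a chain contains at most one $\preceq$-unit — two $\preceq$-units $x_i, x_j$ with $i < j$ would satisfy $x_i \preceq 1_H \preceq x_j$, contradicting $x_j \prec x_i$ — so an infinite strictly descending chain is eventually composed of $\preceq$-non-units and therefore produces an element of infinite $\preceq$-height, contradicting strong artinianness.
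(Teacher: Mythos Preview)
Your proof is correct and follows essentially the same route as the paper: the same cyclic chain \ref{prop:FTF-inverso(a)} $\Rightarrow$ \ref{prop:FTF-inverso(b)} $\Rightarrow$ \ref{prop:FTF-inverso(c)} $\Rightarrow$ \ref{prop:FTF-inverso(a)}, the same length function $\ell = \phi$ (pullback of $\leq$ on $\mathbb N$) for \ref{prop:FTF-inverso(a)} $\Rightarrow$ \ref{prop:FTF-inverso(b)}, and the same splitting argument to show every $\preceq$-irreducible lies in $A$. Your treatment of \ref{prop:FTF-inverso(b)} $\Rightarrow$ \ref{prop:FTF-inverso(c)} is actually more careful than the paper's (which simply declares it ``obvious''), since you spell out why a strictly descending chain can contain at most one $\preceq$-unit and hence strongly artinian implies artinian.
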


\begin{proof}
We will focus attention on proving that \ref{prop:FTF-inverso(a)} $\Rightarrow$ \ref{prop:FTF-inverso(b)}, because the implication \ref{prop:FTF-inverso(b)} $\Rightarrow$ \ref{prop:FTF-inverso(c)} is obvious and \ref{prop:FTF-inverso(c)} $\Rightarrow$ \ref{prop:FTF-inverso(a)} is a trivial consequence of Theorem \ref{thm:abstract-factorization}.

\vskip 0.05cm

\ref{prop:FTF-inverso(a)} $\Rightarrow$ \ref{prop:FTF-inverso(b)}:  
In the language of Example \ref{exa:irrds-atoms-quarks}\ref{exa:irrds-atoms-quarks(3)}, let $\preceq$ be the pullback of the usual order $\leq$ on $\mathbb N$ through the function $\phi \colon H \to \mathbb N$ that maps a non-identity element $x \in \langle A \rangle_H$ to the smallest integer $n \ge 1$ such that $x = a_1 \cdots a_n$ for some $a_1, \ldots, a_n \in A$ and any other element of $H$ to $0$. 
Since $(\mathbb N, +)$ is a strongly $\leq$-artinian monoid (by the well-ordering principle), $\preceq$ is a strongly artinian preorder on $H$. Moreover, $u \in H$ is a $\preceq$-unit if and only if $\phi(u) = 0$. So, every $x \in S$ is a $\preceq$-non-unit (by the hypothesis that $1_H \notin \allowbreak S$ and the assumption that $x$ factors as a non-empty product of elements of $A$), and every $a \in A$ is a $\preceq$-quark (because $\phi(a) = 1$ and hence $b \prec a$ only if $b$ is a $\preceq$-unit). Recalling that a $\preceq$-quark is ob\-vi\-ous\-ly a $\preceq$-ir\-re\-duc\-i\-ble, it remains to see that all $\preceq$-ir\-re\-duc\-i\-bles are in $A$.

For, let $x \in H$ be $\preceq$-ir\-re\-duc\-i\-ble. In particular, this means that $x$ is a $\preceq$-non-unit, with the result that $n := \phi(x) \in \mathbb N^+$ and $x = a_1 \cdots a_n$ for some $a_1, \ldots, a_n \in A$. Suppose for a contradiction that $x \notin A$. Then $n \ge 2$ and hence $x = yz$, where $y := a_1 \cdots a_{n-1}$ and $z := a_n$ are $\preceq$-non-units with $1 \le \phi(y) \le \allowbreak n-1 < n$ and $1 = \allowbreak \phi(z) < n$, namely, $y \prec x$ and $z \prec x$ (if $\phi(y) = 0$, then $y = 1_H$ and $x = a_n \in A$, which is absurd). This, however, means that $x$ is not $\preceq$-ir\-re\-duc\-i\-ble, which is impossible and finishes the proof.
\end{proof}

Proposition \ref{prop:FTF-inverso} shows that Theorem \ref{thm:abstract-factorization} is, in a certain sense, ``best possible'': In a monoid, proving that every element of a given set $S$ factors through the elements of a prescribed set $A$ of elementary factors is equivalent to the artinianity of a suitable preorder. The result is of more theoretical than
practical interest, but the reader will hopefully agree that it adds to the ``roundness and soundness'' of the ideas put forth in the present work and its predecessors \cite{Tr20(c), Co-Tr-21(a)}.

With that said, there are in fact many possible ways to improve on Theorem \ref{thm:abstract-factorization} when attention is focused on a specific class of premonoids. Similarly as in the classical theory, one may want to check, e.g., if the factorizations (however defined) of a fixed element are all ``bounded in length'', or if there are only finitely many of them that are ``essentially different'', or if the same conditions hold true for some ``restricted class'' of factorizations. Formalizing these ideas will keep us busy in the next section. 

\section{Factorizations and minimal factorizations}
\label{sec:minimal-factorizations}
Given a set $X$, we denote by $\mathscr F(X)$ the free monoid on $X$; use the symbols $\ast_X$ and $\varepsilon_X$, resp., for the operation and the identity of $\mathscr F(X)$; and refer to an element of $\mathscr F(X)$ as an \evid{$X$-word}, or simply as a \evid{word} if no confusion can arise. 
We recall that $\mathscr F(X)$ consists, as a set, of all finite tuples of elements of $X$; and $\mathfrak u \ast_X \mathfrak v$ is the \evid{con\-cat\-e\-na\-tion} of two such tuples $\mathfrak u$ and $\mathfrak v$. Accordingly, the identity of $\mathscr F(X)$ is the empty tuple, herein called the \evid{empty $X$-word} (or simply the \evid{empty word} if $X$ is clear from the context). 

We take the (\evid{word}) \evid{length} of an $X$-word $\mathfrak u$, denoted by $\|\mathfrak u\|_X$, to be the unique non-negative integer $h$ such that $\mathfrak u \in X^{\times h}$ (so the empty word is the only $X$-word whose length is zero). Note that, if $\mathfrak u$ is an $X$-word of positive length $h$, then $\mathfrak u = u_1 \ast_X \cdots \ast_X u_h$ for some uniquely de\-ter\-mined $u_1,\, \ldots,\, \allowbreak u_h \in \allowbreak X$, named the \evid{letters} of the word. Given $i \in \llb 1, h \rrb$, we then denote by $\mathfrak u[i]$ the $i^\text{th}$ letter of $\mathfrak u$; whence we have $\mathfrak u = \mathfrak u[1] \ast \cdots \ast \mathfrak u[h]$ provided that $\mathfrak u$ is not the empty word. 

When there is no serious risk of ambiguity, we will usually drop the ``$X$'' from the above notation and write $\mathfrak u^{\ast k}$ for the $k^\mathrm{th}$ power of an $X$-word $\mathfrak u$ (so that $\mathfrak u^{\ast 0} := \varepsilon$ and $\mathfrak u^{\ast (k+1)} := \mathfrak u^{\ast k} \ast \mathfrak u$). 

Building on these premises, we aim to extend an approach first envisioned in \cite[Section~4]{An-Tr18}
to counter the ``blow-up phenomena'' already mentioned in the introduction. The starting point is the following:

\begin{definition}
\label{def:shuffling-preorder}
Given a premonoid $\mathcal H = (H, \preceq)$, we denote by $\sqeq_\mathcal{H}$  
the binary re\-la\-tion on the free monoid $\mathscr F(H)$ defined by $\mathfrak a \sqeq_\mathcal{H} \mathfrak b$, for some $H$-words $\mathfrak a$ and $\mathfrak b$, if and only if there is an injective function $\sigma \colon \llb 1, \|\mathfrak a\|_H \rrb \to \llb 1, \|\mathfrak b\|_H \rrb$ such that $\mathfrak a[i] \preceq \mathfrak b[\sigma(i)] \preceq \mathfrak a[i] $ for every $i \in \llb 1, \|\mathfrak a\|_H \rrb$.
\end{definition}

Since the composition of two injective functions is injective, it is immediate that the relation $\sqeq_\mathcal{H}$ in Definition \ref{def:factorizations} is in fact a preorder on the free monoid $\mathscr F(H)$; more precisely, $\sqeq_\mathcal{H}$ is an artinian preorder, because $\mathfrak a \sqneq_\mathcal{H} \mathfrak b$ implies $\|\mathfrak a\|_H < \|\mathfrak b\|_H$ (it is easily seen that, if $\mathfrak a \sqeq_\mathcal{H} \mathfrak b$ and $\|\mathfrak a\|_H = \|\mathfrak b\|_H$, then $\mathfrak b \sqeq_\mathcal{H} \mathfrak a$). This makes it possible to talk of $\sqeq_\mathcal{H}$-minimality and $\sqeq_\mathcal{H}$-equivalence in $\mathscr F(H)$, so leading to:

\begin{definition}\label{def:factorizations}
\begin{enumerate*}[label=\textup{(\arabic{*})}, mode=unboxed]
\item\label{def:factorizations(1)} A \evid{$\preceq$-factorization} of an element $x \in H$ is an $\mathscr I(\mathcal H)$-word $\mathfrak a \in \pi_H^{-1}(x)$ and we set $
\mathcal{Z}_{\mathcal H}(x) := \allowbreak \pi_H^{-1}(x) \cap \allowbreak \mathscr F(\mathscr I(\mathcal H))$, where $\pi_H$ is the \evid{factorization homomorphism} of $H$, i.e., the unique extension of the identity map on $H$ to a monoid homomorphism $\mathscr F(H) \to H$.
\end{enumerate*}

\vskip 0.05cm

\begin{enumerate*}[label=\textup{(\arabic{*})}, resume, mode=unboxed]
\item\label{def:factorizations(2)}
A \evid{minimal $\preceq$-factorization} of $x$ is then a $\sqeq_\mathcal{H}$-minimal word in $\mathcal{Z}_{\mathcal H}(x)$, namely, an $\mathscr I(\mathcal H)$-word $\mathfrak a \in \allowbreak \pi_H^{-1}(x)$ with the additional property that there is no $\mathscr I(\mathcal H)$-word $\mathfrak b \in \pi_H^{-1}(x)$ such that $\mathfrak b  \sqneq_\mathcal{H} \mathfrak a$. We denote the set of minimal $\preceq$-factorizations of $x$ by $
\mathcal{Z}_{\mathcal H}^{\sf m}(x)$; and we refer to
\[
\mathsf{L}_{\mathcal H}(x) := \bigl\{ \|\mathfrak{a}\|_H: \mathfrak{a} \in \mathcal{Z}_{\mathcal H}(x) \bigr\}
\quad\text{and}\quad
\mathsf{L}_{\mathcal H}^{\sf m}(x) := \bigl\{ \|\mathfrak{a}\|_H: \mathfrak{a} \in \mathcal{Z}_{\mathcal H}^{\sf m}(x) \bigr\},
\]
resp., as the \evid{set of lengths} and the \evid{set of minimal lengths} of $x$ (relative to the premonoid $\mathcal H$).
\end{enumerate*}

\vskip 0.05cm

\begin{enumerate*}[label=\textup{(\arabic{*})}, resume, mode=unboxed]
\item\label{def:factorizations(3)} 
Likewise, an \evid{atomic $\preceq$-factorization} of $x$ is an $\mathscr A(\mathcal H)$-word $\mathfrak a \in \mathcal Z_\mathcal{H}(x)$ and a \evid{minimal atomic $\preceq$-fac\-tor\-i\-za\-tion} of $x$ is an $\mathscr A(\mathcal H)$-word $\mathfrak a \in \mathcal Z_\mathcal{H}^\mathsf{m}(x)$. Then we define 
\[
\mathcal Z_\mathcal{H}(x\,; \mathscr A(\mathcal H)) := \mathcal Z_\mathcal{H}(x) \cap \mathscr F(\mathscr A(\mathcal H))
\quad\text{and}\quad
\mathcal Z_\mathcal{H}^\mathsf{m}(x\,; \mathscr A(\mathcal H)) := \mathcal Z_\mathcal{H}^\mathsf{m}(x) \cap \mathscr F(\mathscr A(\mathcal H)), 
\]
and we let the \evid{set of atomic lengths} and the \evid{set of minimal atomic lengths} of $x$ be, resp., the sets
\[
\mathsf{L}_{\mathcal H}(x\,;\mathscr A(\mathcal H)) := \bigl\{ \|\mathfrak{a}\|_H: \mathfrak{a} \in \mathcal{Z}_{\mathcal H}(x\,;\mathscr A(\mathcal H)) \bigr\}
\quad\text{and}\quad
\mathsf{L}_{\mathcal H}^{\sf m}(x\,;\mathscr A(\mathcal H)) := \bigl\{ \|\mathfrak{a}\|_H: \mathfrak{a} \in \mathcal{Z}_{\mathcal H}^{\sf m}(x\,;\mathscr A(\mathcal H)) \bigr\}.
\]
\end{enumerate*}

\begin{enumerate*}[label=\textup{(\arabic{*})}, resume, mode=unboxed]
\item\label{def:factorizations(4)}
The premonoid $\mathcal H$ is \evid{factorable} if every $\preceq$-non-unit has at least one $\preceq$-factorization; \evid{BF-factorable} (resp., \evid{BmF-factorable}) if the set of lengths (resp., of minimal lengths) of each $\preceq$-non-unit is finite and non-empty; \evid{HF-factorable} (resp., \evid{HmF-factorable}) if the same sets of lengths (resp., of minimal lengths) are all singletons; \evid{FF-factorable} (resp., \evid{FmF-factorable}) if the quotient of $\mathcal Z_H(x)$ (resp., of $\mathcal Z_H^\mathsf{m}(x)$) by the relation of $\sqeq_\mathcal{H}$-equivalence (properly restricted) is finite and non-empty for every $\preceq$-non-unit $x$; and \evid{UF-factorable} (resp., \evid{UmF-factorable}) if the same quotients are all singletons.
\end{enumerate*}

\vskip 0.05cm

\begin{enumerate*}[label=\textup{(\arabic{*})}, resume, mode=unboxed]
\item\label{def:factorizations(5)}
In a similar way, $\mathcal H$ is \evid{atomic} if every $\preceq$-non-unit has an atomic $\preceq$-factorization; and is \evid{BF-atomic}, \evid{FmF-atomic}, etc., if the
analogous definitions given in item \ref{def:factorizations(4)} are reformulated in terms of $\preceq$-atoms.
\end{enumerate*}

\vskip 0.05cm

\begin{enumerate*}[label=\textup{(\arabic{*})}, resume, mode=unboxed]
\item\label{def:factorizations(6)}
Finally, we say that the monoid $H$ is factorable, \textup{BF}-factorable, etc., or atomic, \textup{BF}-atomic, etc., if the premonoid $(H, \mid_H)$ is, resp., factorable, \textup{BF}-factorable, etc., or atomic, \textup{BF}-atomic, etc. Furthermore, we denote by $\mathscr{I}(H)$ and $\mathscr{A}(H)$, resp., the set of ir\-red\-u\-ci\-bles and the set of $\mid_H$-atoms of $H$.
\end{enumerate*}
\end{definition}

In the next remark we examine the interrelationship among the notions from the last definition, and then we work out a couple of examples (see also Remark \ref{rem:4.14}).

\begin{remarks}\label{rem:diagram}
\begin{enumerate*}[label=\textup{(\arabic{*})}, mode=unboxed]
\item\label{rem:diagram(1)}
In the notation of Definition \ref{def:factorizations}, it is evident that, for every $x \in H$, $\mathcal{Z}^{\rm m}_\mathcal{H}(x) $ is contained in $\mathcal{Z}_\mathcal{H}(x)$ and hence $\mathsf{L}^{\rm m}_\mathcal{H}(x)$ is contained in $\mathsf{L}_\mathcal{H}(x)$. Moreover, $\mathcal{Z}^{\rm m}_\mathcal{H}(x)$ is empty if and only if so is $\mathcal{Z}_\mathcal{H}(x)$: In particular, if $\mathcal{Z}_\mathcal{H}(x)$ is non-empty, then the artinianity of the preorder $\sqeq_\mathcal{H}$ implies that $\mathcal{Z}_\mathcal{H}(x)$ has a $\sqeq_\mathcal{H}$-minimal element $\mathfrak a$ (see, e.g., \cite[Remark 3.9(3)]{Tr20(c)}) and hence $\mathfrak a \in \mathcal{Z}^{\rm m}_\mathcal{H}(x)$. It follows that $\mathcal{H}$ is fac\-tor\-able if and only if every $\preceq$-non-unit admits a minimal $\preceq$-factorization.
\end{enumerate*}

\vskip 0.05cm

\begin{enumerate*}[label=\textup{(\arabic{*})}, mode=unboxed, resume]
\item\label{rem:diagram(2)} We have already noted that every $\preceq$-atom of a premonoid $\mathcal H = (H, \preceq)$ is a $\preceq$-irreducible; whence $\mathcal{H}$ is atomic only if it is factorable. On the other hand, two $H$-words $\mathfrak a$ and $\mathfrak b$ are $\sqeq_\mathcal{H}$-equivalent (i.e., $\mathfrak a \sqeq_\mathcal{H} \allowbreak \mathfrak b \sqeq_\mathcal{H} \mathfrak a$) only if $\| \mathfrak a\|_H=\|\mathfrak b\|_H$. Therefore, if $\mathcal{H}$ is UF-factorable (resp., UmF-factorable), then it is HF-factorable (resp., HmF-factorable); and if $\mathcal{H}$ is FF-factorable (resp., FmF-factorable), then it is BF-factorable (resp., BmF-factorable). It is also clear from the definitions that $\mathcal{H}$ is BF-factorable (resp., BmF-factorable) whenever it is HF-factorable (resp., HmF-factorable); and is FF-factorable (resp., FmF-factorable) whenever it is UF-factorable (resp., UmF-factorable). Further, we have from item \ref{rem:diagram(1)} that if $\mathcal H$ is UF-, FF-, HF-, or BF-factorable, then it is UmF-, FmF-, HmF, or BmF-factorable, resp.
Lastly, all these statements remain true (essentially by the same arguments) with ``factorable'' replaced by ``atomic''. \\

\indent{}We summarize the above conclusions in the following diagram, which is ultimately a refinement (and a generalization) of analogous diagrams that are often encountered in the literature:
\end{enumerate*}

\begin{figure}[!h]
\scalebox{0.8}{
\begin{tikzpicture}[shorten >=1pt, auto, node distance=1.75cm, main node/.style={rectangle, rounded corners, minimum width=10mm, minimum height=7mm}]

  \node[main node] (UF) {UF-factorable};
  \node[main node] (FF) [below of=UF, xshift=-1.5cm] {FF-factorable};
  \node[main node] (HF) [below of=UF, xshift=1.5cm, yshift=-1.25cm] {HF-factorable};
  \node[main node] (BF) [below of=HF, xshift=-1.5cm] {BF-factorable};
  \node[main node] (A) [below of=BF, xshift=1.9cm] {atomic};
  
  \node[main node] (UmF) [right of=UF, xshift=5cm] {UmF-factorable};
  \node[main node] (HmF) [below of=UmF, xshift=-1.5cm, yshift=-1.25cm] {HmF-factorable};
  \node[main node] (FmF) [below of=UmF, xshift=1.5cm] {FmF-factorable};
  \node[main node] (BmF) [below of=HmF, xshift=1.5cm] {BmF-factorable};
  \node[main node] (F) [below of=BmF, xshift=-1.9cm] {factorable};
  
  \node[main node] (BmFa) [below of=A, xshift=-1.8cm] {BmF-atomic};
  \node[main node] (FmFa) [below of=BmFa, xshift=-1.5cm, yshift=-1.25cm] {FmF-atomic};
  \node[main node] (HmFa) [below of=BmFa, xshift=1.5cm] {HmF-atomic};
  \node[main node] (UmFa) [below of=FmFa, xshift=1.5cm] {UmF-atomic};
  
  \node[main node] (BFa) [below of=F, xshift=1.8cm] {BF-atomic};
  \node[main node] (FFa) [below of=BFa, xshift=1.5cm, yshift=-1.25cm] {FF-atomic};
  \node[main node] (HFa) [below of=BFa, xshift=-1.5cm] {HF-atomic};
  \node[main node] (UFa) [below of=FFa, xshift=-1.5cm] {UF-atomic};

  \path[every node/.style={font=\sffamily\small,
  		inner sep=1pt}]

    (UF) edge [bend right=0, double, -latex'] node[right=1mm] {} (FF)
         edge [bend left=10, double, -latex'] node[right=1mm] {} (HF)
         edge [bend left=10, double, -latex'] node[right=1mm] {} (UmF)
    (FF) edge [bend left=15, double, -latex'] node[right=1mm] {} (FmF)
         edge [bend right=10, double, -latex'] node[right=1mm] {} (BF)
    (HF) edge [bend left=45, double, -latex'] node[right=1mm] {} (HmF)
         edge [bend left=0, double, -latex'] node[right=1mm] {} (BF)
    (BF) edge [bend left=10, double, -latex'] node[right=1mm] {} (BmF)  
         
   (UmF) edge [bend right=0, double, -latex'] node[right=1mm] {} (FmF)
         edge [bend right=15, double, -latex'] node[right=1mm] {} (HmF)
   (FmF) edge [bend left=10, double, -latex'] node[right=1mm] {} (BmF)
   (HmF) edge [bend left=0, double, -latex'] node[right=1mm] {} (BmF)
   
     (A) edge [bend left=0, double, -latex'] node[right=1mm] {} (F)
   (BmF) edge [bend left=8, double, -latex'] node[right=1mm] {} (F)
  (BmFa) edge [bend left=8, double, -latex'] node[right=1mm] {} (A)
  
   (BFa) edge [bend left=10, double, -latex'] node[right=1mm] {} (BmFa)
   (FFa) edge [bend right=10, double, -latex'] node[right=1mm] {} (BFa)
         edge [bend left=15, double, -latex'] node[right=1mm] {} (FmFa)
   (HFa) edge [bend left=0, double, -latex'] node[right=1mm] {} (BFa)
         edge [bend left=45, double, -latex'] node[right=1mm] {} (HmFa)
   (UFa) edge [bend left=0, double, -latex'] node[right=1mm] {} (FFa)
         edge [bend left=10, double, -latex'] node[right=1mm] {} (HFa)
         edge [bend left=10, double, -latex'] node[right=1mm] {} (UmFa)
   
   (FmFa) edge [bend left=10, double, -latex'] node[right=1mm] {} (BmFa)
   (HmFa) edge [bend left=0, double, -latex'] node[right=1mm] {} (BmFa)
   (UmFa) edge [bend left=0, double, -latex'] node[right=1mm] {} (FmFa)
          edge [bend right=10, double, -latex'] node[right=1mm] {} (HmFa);
\end{tikzpicture}
}
\end{figure}
Below in Example \ref{exa:properties-of-factorizations}\ref{exa:properties-of-factorizations(1)}, we construct a UmF-atomic commutative monoid that is not BF-atomic and whose irreducibles are all atoms; and in Example \ref{exa:properties-of-factorizations}\ref{exa:properties-of-factorizations(2)}, we discuss a UmF-factorable commutative monoid that is not atomic. On the other hand, one can already check in the classical case of commutative domains \cite[Sect.~1.2]{GeHK04} that there exist (i) FF-atomic monoids that are not HF-atomic, and conversely; (ii) BF-atomic monoids that are neither FF- nor HF-atomic; and (iii) atomic monoids that are not BF-a\-tom\-ic. Considering that, in a unit-cancellative commutative monoid $H$, every $\mid_H$-factorization is a minimal atomic $\mid_H$-factorization (see the proof of Corollary \ref{cor:comm-unit.canc-weakly.lfgu-is-FFatom}), we can thus conclude that, in general, none of the implications in the above diagram is reversible.
\end{remarks}

\begin{examples}\label{exa:properties-of-factorizations}
\begin{enumerate*}[label=\textup{(\arabic{*})}, mode=unboxed]
\item\label{exa:properties-of-factorizations(1)}
Let $H$ be the multiplicative monoid of the integers modulo $p^n$, where $p \in \mathbb{N}$ is a prime and $n$ be an integer $\ge 2$. Set $\mathcal H = (H, \mid_H)$. Given $x \in \mathbb Z$, we write $\bar{x}$ for the residue class of $x$ modulo $p^n$. 
It is immediate that the units of $H$ are the residue classes modulo $p^n$ of the integers between $1$ and $p^n$ that are not divisible by $p$ (and there are precisely $p^{n-1}(p-1)$ of them); and the (ordinary) atoms of $H$ are the elements of the form $\bar{p} \, u$ with $u \in H^\times$ (note that $H$ is Dedekind-finite and hence, by Remark \ref{rem:preorders}\ref{rem:preorders(1)}, every $\mid_H$-unit is a unit and every $\mid_H$-atom is an atom). We claim that $H$ is an atomic monoid.\\

\indent{}In fact, every non-zero non-unit $x \in H$ can be uniquely written as $\bar{p}^{\,k} u$ for some $k \in \llb 1, n-1 \rrb$ and $u \in H^\times$, and this shows, in particular, that every irreducible is an atom. The $\mid_H$-factorizations of $x$ are therefore the non-empty $H$-words $\bar{p}\, u_1 \ast \cdots \ast \bar{p} \, u_k$ of length $k$ with $u_1, \ldots, u_k \in H$ and $u_1 \cdots u_k = u$ (which implies that the $u_i$'s are all units). On the other hand, the $\mid_H$-factorizations of $\bar{0}$ are all and only the length-$l$ $H$-words of the form $\bar{p} \, v_1 \ast \cdots \ast \bar{p}\, v_l$ with $l \ge n$ and $v_1, \ldots, v_l \in H^\times$. \\

\indent{}Thus, $H$ is UmF-atomic: All the minimal $\mid_H$-factorizations of a non-unit $x\in H$ are $\sqeq_\mathcal{H}$-equivalent (in particular, the minimal $\mid_H$-factorizations of $\bar{0}$ are the $H$-words $\bar{p}\, v_1 \ast \cdots \ast \bar{p}\, v_n$ with $v_1, \ldots, v_n \in H^\times$). On the other hand, $H$ is not even BF-atomic, for the set of  $\mid_H$-factorizations of $\bar{0}$ contains the $H$-word $\bar{p}^{\,\ast (n+k)}$ for every $k \in \mathbb N$ (and hence there are infinitely many of them that are pairwise $\sqeq_\mathcal{H}$-inequivalent). 
\end{enumerate*}

\vskip 0.05cm

\begin{enumerate*}[label=\textup{(\arabic{*})}, mode=unboxed, resume]
\item\label{exa:properties-of-factorizations(2)}
In the notation of Example \ref{exa:irrds-atoms-quarks}\ref{exa:irrds-atoms-quarks(2)}, it is clear that the premonoid $\mathscr P(S)$ is (i) atomic if and only if $S = \emptyset$, and (ii) factorable if and only if it is BmF-factorable, if and only if it is FmF-factorable, if and only if it is UmF-factorable, if and only if $|S| < \infty$: In particular, recall that the $\subseteq_S$-irreducibles are the one-element subsets of $S$ and hence every finite $X \subseteq S$ has a unique minimal $\subseteq_S$-factorization; and note that, if $a$ is an arbitrary element of $S$ (and hence $S$ is non-empty), then the length-$n$ word $\{a\} \ast \allowbreak \cdots \ast \allowbreak \{a\}$ is a $\subseteq_S$-factorization of $\{a\}$ for all $n \in \mathbb N^+$.
\end{enumerate*}
\end{examples}

On the whole, the present paper is about sufficient or necessary conditions for a premonoid to be factorable, BmF-factorable, etc.; and in particular for a monoid to be atomic, BF-atomic, FmF-atomic, etc. 
The literature abounds with results of this kind, but they are mainly about commutative or unit-cancellative monoids, see, e.g., \cite[Theorems 2.3, 2.4, and 3.1]{Co63}, \cite[Theorems 4 and 7]{Fl69}, \cite[Th\'eor\`eme de Structure]{Bouv74b}, \cite[Theorems 3.9, 3.11, 3.13, 4.4, and 4.9]{AnVL96}, \cite[Theorems 3.3, 3.4, and 3.6]{ChAnVLe11}, and \cite[parts (i) and (iv) of Theorem 2.28, and Corollary 2.29]{Fa-Tr18}, \cite[Theorem 2.8]{BaBaGo14}, and \cite{Be-Br-Na-Sm22}. On the other hand, much less is known for non-commutative non-unit-cancellative monoids (including non-commutative rings with zero divisors), see, e.g., \cite{An-Tr18, Tr20(c), Co-Tr-21(a)} (for an alternative notion of BF-ness, cf.~\cite[Lemma 2.2]{FaFa18}). Our starting point is the following proposition (the reader may want to review Remark \ref{rem:preorders}\ref{rem:preorders(2)} before reading further):

\begin{proposition}\label{prop:x-closedness}
Let $\mathcal{H}=(H,\preceq)$ be a premonoid and $\mathcal K = (K, \preceq_K)$ be a subpremonoid of $\mathcal H$ containing all the divisors (in $H$) of a fixed $x\in H$ (and hence $x$ itself). The following hold:

\vskip 0.05cm

\begin{enumerate*}[label=\textup{(\roman{*})}, mode=unboxed]
\item\label{prop:x-closedness(i)}
$\mathscr I_x(\mathcal H) = \mathscr I_x(\mathcal K)$ and $\mathscr A_x(\mathcal H) = \mathscr A_x(\mathcal K)$.
\end{enumerate*}

\vskip 0.05cm

\begin{enumerate*}[label=\textup{(\roman{*})}, mode=unboxed, resume]
\item\label{prop:x-closedness(ii)} $\mathcal{Z}_{\mathcal H}(x)=\mathcal{Z}_{\mathcal K}(x)$ and $\mathcal{Z}_{\mathcal H}(x; \mathscr{A}(\mathcal{H}))=\mathcal{Z}_{\mathcal K}(x; \mathscr{A}(\mathcal{K}))$.
\end{enumerate*}

\vskip 0.05cm

\begin{enumerate*}[label=\textup{(\roman{*})}, mode=unboxed, resume]
\item\label{prop:x-closedness(iii)} $\mathcal{Z}^{\rm m}_{\mathcal H}(x)=\mathcal{Z}^{\rm m}_{\mathcal K}(x)$ and $\mathcal{Z}^{\rm m}_{\mathcal H}(x; \mathscr{A}(\mathcal{H}))=\mathcal{Z}^{\rm m}_{\mathcal K}(x; \mathscr{A}(\mathcal{K}))$.
\end{enumerate*}

\vskip 0.05cm

\begin{enumerate*}[label=\textup{(\roman{*})}, mode=unboxed, resume]
\item\label{prop:x-closedness(iv)} $\mathsf{L}_{\mathcal H}(x)=\mathsf{L}_{\mathcal K}(x)$ and  $\mathsf{L}_{\mathcal H}(x; \mathscr{A}(\mathcal{H}))=\mathsf{L}_{\mathcal K}(x; \mathscr{A}(\mathcal{K}))$.
\end{enumerate*}

\vskip 0.05cm

\begin{enumerate*}[label=\textup{(\roman{*})}, mode=unboxed, resume]
\item\label{prop:x-closedness(v)} $\mathsf{L}^{\rm m}_{\mathcal H}(x)=\mathsf{L}^{\rm m}_{\mathcal K}(x)$ and  $\mathsf{L}^{\rm m}_{\mathcal H}(x; \mathscr{A}(\mathcal{H}))=\mathsf{L}^{\rm m}_{\mathcal K}(x; \mathscr{A}(\mathcal{K}))$.
\end{enumerate*}

\vskip 0.05cm

\noindent{}In particular, these conclusions are true when $K$ is a divisor-closed submonoid of $H$ containing $x$.
\end{proposition}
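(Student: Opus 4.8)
The plan is to reduce everything to item \ref{prop:x-closedness(i)} together with a handful of elementary observations about how $\preceq$ and $\mid_H$ behave on $K$. First I would dispose of the final sentence: if $K$ is a divisor-closed submonoid of $H$ containing $x$, then by definition every $d \in H$ with $d \mid_H x$ lies in $K$, so $\mathcal K$ contains all divisors of $x$ and the hypothesis of the proposition is met; it therefore suffices to argue under that hypothesis. The three facts I would record at the outset are: (a) by Remark \ref{rem:preorders}\ref{rem:preorders(2)}, $\mathcal K^\times = K \cap \mathcal H^\times$, so an element of $K$ is a $\preceq_K$-non-unit if and only if it is a $\preceq$-non-unit; (b) since $\preceq_K$ is the restriction of $\preceq$ to $K$, for $c, d \in K$ we have $c \preceq_K d \iff c \preceq d$ and hence $c \prec_K d \iff c \prec d$, where $\prec_K$ denotes the strict part of $\preceq_K$; and (c) for every $a \in K$ one has $a \mid_H x \iff a \mid_K x$. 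The only non-formal point is the forward implication in (c): if $x = uav$ with $u, v \in H$, then $u \mid_H x$ and $v \mid_H x$, so $u$ and $v$ are divisors of $x$ and therefore lie in $K$ by hypothesis, whence $a \mid_K x$.

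Next I would prove \ref{prop:x-closedness(i)}. By (c) and the hypothesis, an element $a$ divides $x$ in $H$ exactly when $a \in K$ and $a \mid_K x$, so it remains to show that such an $a$ is $\preceq$-irreducible in $\mathcal H$ if and only if it is $\preceq_K$-irreducible in $\mathcal K$ (and similarly for atoms). The forward direction is immediate: a factorization $a = yz$ into $\preceq_K$-non-units $y, z \in K$ with $y \prec_K a$ and $z \prec_K a$ is, by (a) and (b), a witness that $a$ is not $\preceq$-irreducible. For the converse---the crux of the whole argument---suppose $a = yz$ with $y, z \in H$ both $\preceq$-non-units and $y \prec a$, $z \prec a$; then $y \mid_H a \mid_H x$ and $z \mid_H a \mid_H x$, so $y$ and $z$ are divisors of $x$ and hence lie in $K$, and (a), (b) turn this into a witness against the $\preceq_K$-irreducibility of $a$. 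The argument for $\mathscr A_x$ is the same, dropping the conditions $y \prec a$, $z \prec a$ (the definition of $\preceq$-atom imposes no such constraint), which yields $\mathscr A_x(\mathcal H) = \mathscr A_x(\mathcal K)$.

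Finally I would harvest \ref{prop:x-closedness(ii)}--\ref{prop:x-closedness(v)}. Every letter of a word $\mathfrak a$ with $\pi_H(\mathfrak a) = x$ divides $x$ in $H$ and hence lies in $K$; combined with the fact that $K$ is a submonoid (so $\pi_H$ and $\pi_K$ agree on $\mathscr F(K) \subseteq \mathscr F(H)$) and with \ref{prop:x-closedness(i)}, this shows that the $\preceq$-factorizations of $x$ and the $\preceq_K$-factorizations of $x$ are literally the same words, and likewise for atomic factorizations; this is \ref{prop:x-closedness(ii)}. For \ref{prop:x-closedness(iii)}, note that $\mathcal Z_{\mathcal H}(x) = \mathcal Z_{\mathcal K}(x)$ consists of words over $K$, and by (b) the preorder $\sqeq_\mathcal H$ restricted to $\mathscr F(K)$ coincides with $\sqeq_\mathcal K$ (the relation only compares letters up to $\preceq$-equivalence); hence $\sqeq_\mathcal H$-minimality within this common set agrees with $\sqeq_\mathcal K$-minimality, and the atomic case follows by intersecting with $\mathscr F(\mathscr A(\mathcal H)) = \mathscr F(\mathscr A(\mathcal K))$ on the relevant words, using \ref{prop:x-closedness(i)} once more. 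Items \ref{prop:x-closedness(iv)} and \ref{prop:x-closedness(v)} are then immediate, since the corresponding factorization sets coincide and the word length of any element of $\mathscr F(K) \subseteq \mathscr F(H)$ is the same whether it is read in $K$ or in $H$. The main obstacle, as flagged, is the converse half of \ref{prop:x-closedness(i)}: it is exactly there that the hypothesis on $K$ (rather than mere submonoid-hood) is used, to pull an ostensibly ``larger'' factorization in $H$ back inside $K$.
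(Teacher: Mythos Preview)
Your proof is correct and follows essentially the same approach as the paper's: both reduce everything to item \ref{prop:x-closedness(i)}, establish that equality by using the hypothesis to pull a would-be factorization $a=yz$ in $H$ back into $K$ (via $y\mid_H x$ and $z\mid_H x$), and then harvest \ref{prop:x-closedness(ii)}--\ref{prop:x-closedness(v)} from the observation that $\sqeq_{\mathcal K}$ is the restriction of $\sqeq_{\mathcal H}$ to $\mathscr F(K)$. Your packaging of facts (a)--(c) up front is a bit tidier than the paper's inline treatment, but the logical content is identical.
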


\begin{proof}
The ``In particular'' part of the proposition is obvious, because every divisor-closed submonoid of $H$ containing $x$ does also contain each and every divisor of $x$ in $H$. Moreover, \ref{prop:x-closedness(iv)} and \ref{prop:x-closedness(v)} are immediate consequences of \ref{prop:x-closedness(ii)} and \ref{prop:x-closedness(iii)}, resp. So, we concentrate on items \ref{prop:x-closedness(i)}--\ref{prop:x-closedness(iii)}.

\vskip 0.05cm

\ref{prop:x-closedness(i)} We will only check that $\mathscr I_x(\mathcal H) = \mathscr I_x(\mathcal K)$; showing $\mathscr A_x(\mathcal H) = \mathscr A_x(\mathcal K)$ can be done in a sim\-i\-lar (and even simpler) way, and we leave the details to the reader. 

To start with, pick $a \in \mathscr{I}_x(\mathcal{K})$ and assume for a contradiction that $a\notin \mathscr{I}_x(\mathcal{H})$. Then $a \mid_K x$ and hence $a \mid_H x$ (because $K$ is a submonoid of $H$). We thus get that $a \notin \mathscr{I}(\mathcal{H})$; and since $a$ is a $\preceq$-non-unit (recall from Remark \ref{rem:preorders}\ref{rem:preorders(2)} that $\mathcal K^\times = K \cap \mathcal H^\times$), it follows that there exist $\preceq$-non-units $u, v \in H$ with $u \prec a$ and $v \prec a$ such that $a = uv$. So, $u$ and $v$ are $\mid_H$-divisors of $x$ and hence elements of $K$, because $K$ contains all the divisors of $x$ in $H$ (by hypothesis). This is, however, in contradiction with the $\preceq_K$-irreducibility of $a$, because $u$ and $v$ are then $\preceq_K$-non-units with $u \prec_K a$ and $v \prec_K a$. In consequence, $ \mathscr{I}_x(\mathcal{K})\subseteq  \mathscr{I}_x(\mathcal{H})$. 

As for the opposite inclusion, let $a \in \mathscr{I}(\mathcal{H})$ such that $x=yaz$ for some $y,z\in H$. Since $K$ contains all the divisors of $x$ in $H$, each of $a$, $y$, and $z$ lies in $K$. So, $a$ is a $\preceq_K$-non-unit and a divisor of $x$ in $K$; in particular, if $a$ is not $\preceq_K$-irreducible, then there exist $\preceq_K$-non-units $u,v\in K$ with $u \prec_K a$ and $v \prec_K a$ such that $a=uv$, which is impossible as it means that $a$ is not $\preceq$-irreducible. All in all, we can therefore conclude that $a \in \mathscr{I}_x(\mathcal{K})$ and hence $\mathscr I_x(\mathcal H) \subseteq \mathscr{I}_x(\mathcal{K})$.
    
\ref{prop:x-closedness(ii)} Let $\mathfrak a \in \mathcal{Z}_{\mathcal H}(x)$ be a $\preceq$-factorization of $x$, meaning that $\mathfrak a$ is an  $\mathscr{I}(\mathcal{H})$-word such that $\pi_H(\mathfrak a) = x$. Then $\mathfrak a[i]\in\mathscr{I}_x(\mathcal{H})$ for each $i \in \llb 1, \|\mathfrak a\|_H \rrb$; and by item \ref{prop:x-closedness(i)}, this implies that $\mathfrak a$ is an $\mathscr{I}(\mathcal{K})$-word such that $\pi_K(\mathfrak a) = x$, that is, a $\preceq_K$-factorization of $x$. It follows that $\mathcal Z_\mathcal{H}(x)$ is contained in $\mathcal Z_\mathcal{K}(x)$, and the reverse inclusion is analogous. In consequence, we find that
\begin{equation}
\label{equ:atoms-in-certain-subpremonoids}
\begin{split}
\mathcal{Z}_{\mathcal H}(x;\mathscr{A}(\mathcal{H}))& = \mathcal{Z}_{\mathcal H}(x)\cap \mathscr{F}(\mathscr{A}(\mathcal{H}))  = \mathcal{Z}_{\mathcal H}(x)\cap \mathscr{F}(\mathscr{A}_x(\mathcal{H})) \\
& \stackrel{\ref{prop:x-closedness(i)}}{=} \mathcal{Z}_{\mathcal K}(x)\cap \mathscr{F}(\mathscr{A}_x(\mathcal{K}))= \mathcal{Z}_{\mathcal K}(x)\cap \mathscr{F}(\mathscr{A}(\mathcal{K}))= \mathcal{Z}_{\mathcal K}(x;\mathscr{A}(\mathcal{K})),
\end{split}
\end{equation}
where, among other things, we have used that if an $\mathscr{A}(\mathcal{H})$-word (resp., an $\mathscr{A}(\mathcal{K})$-word) $\mathfrak a$ is a $\preceq$-fac\-tor\-i\-za\-tion (resp., a $\preceq_K$-factorization) of $x$, then $\mathfrak a$ is an $\mathscr{A}_x(\mathcal{H})$-word (resp., an $\mathscr{A}_x(\mathcal{K})$-word).

\vskip 0.05cm

\ref{prop:x-closedness(iii)} 
Let $\mathfrak a$ and $\mathfrak b$ be $K$-words. In the notation of Definition \ref{def:shuffling-preorder}, we have that $\mathfrak a \sqeq_\mathcal{K} \mathfrak b$ if and only if there exists an injective function $\sigma: \llb 1, \|\mathfrak a\|_K \rrb \to \llb 1, \|\mathfrak b\|_K\rrb$ such that $\mathfrak a[i]\preceq_K \mathfrak b[\sigma(i)]\preceq_K \mathfrak a[i]$ for each $i \in \allowbreak \llb 1, \allowbreak \|\mathfrak a\|_K \rrb$. Since $\|\mathfrak z\|_K = \|\mathfrak z\|_H$ for every $K$-word $\mathfrak z$ and, on the other hand, $a \preceq_K b$ if and only if $a \preceq b$ and $a, b \in K$, we thus find that $\mathfrak a \sqeq_{\mathcal{K} }\mathfrak b$ if and only if $\mathfrak a \sqeq_{\mathcal{H} }\mathfrak b$.

At the end of the day, this shows that the preorder $\sqeq_\mathcal{K}$ is the restriction to $\mathscr F(K)$ of the preorder $\sqeq_\mathcal{H}$. It is therefore clear that a word $\mathfrak z$ in a set of $K$-words is $\sqeq_\mathcal{K}$-minimal if and only if it is $\sqeq_\mathcal{H}$-minimal; and by the first part of item \ref{prop:x-closedness(ii)}, we obtain that $\mathcal{Z}^{\rm m}_{\mathcal H}(x)=\mathcal{Z}^{\rm m}_{\mathcal K}(x)$ for every $x \in K$. Similarly as in the proof of Eq.~\eqref{equ:atoms-in-certain-subpremonoids}, it then follows that $\mathcal{Z}^{\rm m}_{\mathcal H}(x;\mathscr{A}(\mathcal{H})) = 
\mathcal{Z}^{\rm m}_{\mathcal K}(x;\mathscr{A}(\mathcal{K}))$.
\end{proof} 

\begin{corollary}\label{cor:divisor-closedness}
Let $\mathcal{H} = (H,\preceq)$ be a premonoid and $\mathcal K = (K, \preceq_K)$ be a subpremonoid of $\mathcal H$ such that $K$ is a divisor-closed submonoid. Then
$\mathscr{I}(\mathcal{K}) = K \cap \mathscr{I}(\mathcal{H})$ and $\mathscr{A}(\mathcal{K}) = K \cap \mathscr{A}(\mathcal{H})$.
\end{corollary}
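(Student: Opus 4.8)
The plan is to deduce both equalities directly from part \ref{prop:x-closedness(i)} of Proposition~\ref{prop:x-closedness} by specializing the ``local'' invariants $\mathscr{I}_x$ and $\mathscr{A}_x$ to the point $x = a$ and recovering the ``global'' sets $\mathscr{I}$ and $\mathscr{A}$ from them. The first thing I would record is that, because $K$ is a divisor-closed submonoid of $H$, every $a \in K$ has \emph{all} of its divisors (in $H$) lying in $K$; hence for each such $a$ the hypotheses of Proposition~\ref{prop:x-closedness} are satisfied with the fixed element taken to be $a$ itself, and item \ref{prop:x-closedness(i)} supplies the identities $\mathscr{I}_a(\mathcal{H}) = \mathscr{I}_a(\mathcal{K})$ and $\mathscr{A}_a(\mathcal{H}) = \mathscr{A}_a(\mathcal{K})$.

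Next I would observe the elementary reflexivity $a \mid_H a$ and $a \mid_K a$, valid for every element, which lets one pass from the local to the global sets. Indeed, $a$ belongs to $\mathscr{I}_a(\mathcal{H})$ exactly when it is a $\preceq$-irreducible dividing $a$, i.e.\ exactly when $a \in \mathscr{I}(\mathcal{H})$; and likewise $a \in \mathscr{I}_a(\mathcal{K})$ if and only if $a \in \mathscr{I}(\mathcal{K})$. The same reduction works for atoms once we invoke the fact (recorded in Example~\ref{exa:irrds-atoms-quarks}\ref{exa:irrds-atoms-quarks(1)}, or in Remark~\ref{rem:diagram}\ref{rem:diagram(2)}) that every $\preceq$-atom is a $\preceq$-irreducible: since $\mathscr{A}_a(\mathcal{H}) = \mathscr{I}_a(\mathcal{H}) \cap \mathscr{A}(\mathcal{H})$ and $\mathscr{A}(\mathcal{H}) \subseteq \mathscr{I}(\mathcal{H})$, membership $a \in \mathscr{A}_a(\mathcal{H})$ collapses to $a \in \mathscr{A}(\mathcal{H})$, and symmetrically in $\mathcal{K}$.

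Combining these two observations, for each $a \in K$ I would chain the equivalences
\[
a \in \mathscr{I}(\mathcal{K}) \iff a \in \mathscr{I}_a(\mathcal{K}) \iff a \in \mathscr{I}_a(\mathcal{H}) \iff a \in \mathscr{I}(\mathcal{H}),
\]
and, since $\mathscr{I}(\mathcal{K}) \subseteq K$ by definition, this gives $\mathscr{I}(\mathcal{K}) = K \cap \mathscr{I}(\mathcal{H})$. Replacing $\mathscr{I}$ by $\mathscr{A}$ throughout yields $\mathscr{A}(\mathcal{K}) = K \cap \mathscr{A}(\mathcal{H})$ verbatim. I do not expect any real obstacle: essentially all the content is already packaged inside Proposition~\ref{prop:x-closedness}\ref{prop:x-closedness(i)}, and the only point deserving a moment's attention is the translation between the local sets $\mathscr{I}_a, \mathscr{A}_a$ and the global sets $\mathscr{I}, \mathscr{A}$, which rests on the trivial reflexivity $a \mid_H a$ together with the inclusion $\mathscr{A} \subseteq \mathscr{I}$ needed for the atom statement.
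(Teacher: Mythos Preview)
Your proposal is correct and follows essentially the same approach as the paper's proof: both arguments specialize Proposition~\ref{prop:x-closedness}\ref{prop:x-closedness(i)} to the element $a$ itself (using divisor-closedness of $K$ to verify the hypothesis) and then pass from $\mathscr{I}_a$ back to $\mathscr{I}$ via the reflexivity $a \mid a$. The only cosmetic difference is that the paper verifies the two inclusions separately while you package them as a single chain of biconditionals.
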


\begin{proof}
We just prove the first equality, the second can be proved in an analogous fashion. 

To begin, pick $x \in \mathscr{I}(\mathcal{K})$. It is obvious from the definitions that $x\in K$ and $x \in \mathscr{I}_x(\mathcal{K})$. Since $K$ is divisor-closed in $H$ (and hence contains all the divisors of $x$ in $H$), we thus get from Proposition \ref{prop:x-closedness}\ref{prop:x-closedness(i)} that $x \in \mathscr{I}_x(\mathcal{H})$ and hence $x \in K \cap \allowbreak \mathscr{I}(\mathcal{H})$. It follows that $\mathscr{I}(\mathcal{K})\subseteq K\cap \mathscr{I}(\mathcal{H})$. 

As for the reverse inclusion, let $x \in K \cap \mathscr{I}(\mathcal{H})$. As before, we have $x\in \mathscr{I}_x(\mathcal{H})=\mathscr{I}_x(\mathcal{K})$, since $x$ is in $K$ and $K$ satisfies the hypotheses of Proposition \ref{prop:x-closedness}\ref{prop:x-closedness(i)}. It follows that $x\in \mathscr{I}(\mathcal{K})$, and so we are done.
\end{proof} 

For the next result we recall from Remark \ref{rem:preorders}\ref{rem:preorders(2)} that $\llb x \rrb_\mathcal{H}$ denotes the subpremonoid of a premonoid $\mathcal{H}$, whose ``ground monoid'' is the smallest divisor-closed submonoid $\llb x \rrb_H$ of $H$ containing an element $x \in H$.

\begin{corollary}\label{cor:cor2}
A premonoid $\mathcal{H} = (H, \preceq)$ is factorable, \textup{UF}-factorable, \textup{HF}-factorable, \textup{FF}-factorable, \textup{BF}-fac\-tor\-able, \textup{UmF}-factorable, \textup{HmF}-factorable, \textup{BmF}-factorable, or \textup{FmF}-factorable (resp., atomic, \textup{UF}-a\-tom\-ic, etc.) if and only if so is $\llb x \rrb_\mathcal H$ for every $\preceq$-non-unit $x$. 
\end{corollary}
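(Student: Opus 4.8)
The statement claims that each of these global arithmetic properties of $\mathcal{H}$ holds if and only if it holds for every ``local'' premonoid $\llb x \rrb_\mathcal{H}$ as $x$ ranges over the $\preceq$-non-units. The key observation is that for any $\preceq$-non-unit $x$, the submonoid $\llb x \rrb_H$ is divisor-closed and contains $x$, so Proposition \ref{prop:x-closedness} applies with $K = \llb x \rrb_H$ and $\mathcal{K} = \llb x \rrb_\mathcal{H}$. Thus all the relevant invariants attached to $x$ — the sets $\mathcal{Z}_\mathcal{H}(x)$, $\mathcal{Z}_\mathcal{H}^{\mathsf m}(x)$, $\mathsf{L}_\mathcal{H}(x)$, $\mathsf{L}_\mathcal{H}^{\mathsf m}(x)$, and their atomic counterparts — coincide with the same invariants computed in $\llb x \rrb_\mathcal{H}$. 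The whole proof is really an exercise in unwinding the definitions and noting that each of the listed properties is a quantifier over $\preceq$-non-units of a predicate depending only on these local invariants.

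First I would fix a $\preceq$-non-unit $x$ of $\mathcal{H}$ and apply Proposition \ref{prop:x-closedness} to record the five identities (i)--(v) for this $x$, together with the observation that being a $\preceq$-non-unit is itself a local condition: by Remark \ref{rem:preorders}\ref{rem:preorders(2)} we have $\mathcal{K}^\times = K \cap \mathcal{H}^\times$, so $x$ is a $\preceq$-non-unit of $\mathcal{H}$ if and only if it is a $\preceq_K$-non-unit of $\llb x \rrb_\mathcal{H}$. Next I would observe that the $\preceq$-non-units of $\llb x \rrb_\mathcal{H}$ are precisely the $\preceq$-non-units of $\mathcal{H}$ that lie in $\llb x \rrb_H$; hence quantifying ``over every $\preceq$-non-unit $y$ of $\llb x \rrb_\mathcal{H}$, for every $\preceq$-non-unit $x$ of $\mathcal{H}$'' reduces to quantifying over every $\preceq$-non-unit of $\mathcal{H}$ — the key point being that any $\preceq$-non-unit $y$ appears among the elements of its own $\llb y \rrb_\mathcal{H}$, and conversely every element of any $\llb x \rrb_H$ that is a $\preceq$-non-unit is a $\preceq$-non-unit of $\mathcal{H}$.

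I would then treat the two directions uniformly. Each property in the list is of the shape ``$P(y)$ holds for every $\preceq$-non-unit $y$'', where $P$ is a predicate decided entirely by the local data at $y$ (e.g.\ for BmF-factorability, $P(y)$ is ``$\mathsf{L}_\mathcal{H}^{\mathsf m}(y)$ is finite and non-empty''; for FF-atomicity, $P(y)$ is ``$\mathcal{Z}_\mathcal{H}(y;\mathscr A(\mathcal H))$ modulo $\sqeq_\mathcal{H}$-equivalence is finite and non-empty''). By Proposition \ref{prop:x-closedness}\ref{prop:x-closedness(i)}--\ref{prop:x-closedness(v)}, the truth value of $P(y)$ is unchanged whether it is computed in $\mathcal{H}$ or in $\llb x \rrb_\mathcal{H}$ for any divisor-closed $\llb x \rrb_\mathcal{H}$ containing $y$ (for the FF/UF properties one additionally uses that, by item \ref{prop:x-closedness(iii)}, $\sqeq_{\llb x \rrb_\mathcal{H}}$ is the restriction of $\sqeq_\mathcal{H}$, so the two quotients are literally equal). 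Hence $\mathcal{H}$ satisfies the property iff $P(y)$ holds for all $\preceq$-non-units $y$, iff $P(y)$ holds for all $\preceq$-non-units $y \in \llb x \rrb_H$ for every $\preceq$-non-unit $x$, iff each $\llb x \rrb_\mathcal{H}$ satisfies the property.

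The main obstacle, such as it is, is purely bookkeeping: there is no single hard step, but one must verify that every property in the (long) list is indeed expressible as a local predicate covered by the appropriate item of Proposition \ref{prop:x-closedness}, and that the atomic variants are handled by the parallel identities involving $\mathscr A(\mathcal{H})$ and $\mathscr A(\mathcal{K})$. The only subtlety worth spelling out is that, for the ``if'' direction, one must take $K = \llb y \rrb_H$ itself — so that $y$ is a $\preceq$-non-unit of $\llb y \rrb_\mathcal{H}$ and the hypothesis supplies $P(y)$ directly — while for the ``only if'' direction one takes an arbitrary $\preceq$-non-unit $x$ and notes that every $\preceq$-non-unit $y$ of $\llb x \rrb_\mathcal{H}$ is a $\preceq$-non-unit of $\mathcal{H}$, so the global hypothesis transfers. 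Given this, I would simply state that all the listed equivalences follow at once from Proposition \ref{prop:x-closedness}, rather than re-deriving each of the roughly twenty cases.
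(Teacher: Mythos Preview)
Your proposal is correct and follows essentially the same route as the paper: both arguments reduce each listed property to a local predicate on $\preceq$-non-units, invoke Remark~\ref{rem:preorders}\ref{rem:preorders(2)} to identify the $\preceq$-non-units of $\llb x \rrb_\mathcal{H}$ with those of $\mathcal{H}$ lying in $\llb x \rrb_H$, and then appeal to Proposition~\ref{prop:x-closedness}\ref{prop:x-closedness(ii)}--\ref{prop:x-closedness(v)} (together with the fact, from the proof of \ref{prop:x-closedness(iii)}, that $\sqeq_{\llb x \rrb_\mathcal{H}}$ is the restriction of $\sqeq_\mathcal{H}$) to transfer the relevant invariants. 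The paper spells out only the factorable case and leaves the rest to the reader, whereas you package the argument uniformly via a predicate $P(y)$; but the substance is identical.
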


\begin{proof}
We will just show that, if $\mathcal{H}$ is a factorable premonoid, then so is $\llb x \rrb_\mathcal H$ for every $\preceq$-non-unit $x$: The converse is immediate from Proposition \ref{prop:x-closedness}\ref{prop:x-closedness(ii)}, because every $\preceq$-non-unit has a $\preceq_{K}$-factorization; and the other logical equivalences (involving atomicity, BF-ness, etc.)~are proved in essentially the same way, based on Proposition \ref{prop:x-closedness}\ref{prop:x-closedness(ii)}--\ref{prop:x-closedness(v)} (we leave the details to the reader).

So, assume $\mathcal{H}$ is factorable, let $x$ be a $\preceq$-non-unit, and for ease of notation set $K := \llb x \rrb_H$ and $\mathcal{K} := \allowbreak \llb x \rrb_\mathcal H$. We claim that $\mathcal K$ is a factorable premonoid. For, fix $y \in K \setminus \mathcal{K}^\times$. By Remark \ref{rem:preorders}\ref{rem:preorders(2)}, $y$ is a $\preceq$-non-unit and hence, by assumption, $\mathcal Z_\mathcal{H}(y)$ is non-empty. Therefore, we gather from Proposition \ref{prop:x-closedness}\ref{prop:x-closedness(ii)} that $\mathcal Z_\mathcal{K}(y)$, too, is non-empty, since $K$ is a divisor-closed submonoid of $H$. By the arbitrariness of $x$ and $y$, this is enough to finish the proof.
\end{proof}

Incidentally, we get from Remark \ref{rem:preorders}\ref{rem:preorders(3)} that Corollary \ref{cor:divisor-closedness} and items \ref{prop:x-closedness(ii)}--\ref{prop:x-closedness(v)} of Proposition \ref{prop:x-closedness} (specialized to the divisibility preorder) are a generalization of \cite[Proposition 2.21]{Fa-Tr18} and \cite[Proposition 4.7]{An-Tr18} for the part concerning factorizations and sets of lengths relative to (ordinary) atoms. Note in this regard that the existence itself of an atom in a monoid $H$ is a sufficient condition for $H$ to be Dedekind-finite \cite[Proposition 2.30]{Fa-Tr18}, with the result that the set of $\mid_H$-units is nothing else than the set $H^\times$ of (ordinary) units and hence the set of $\mid_H$-atoms is the set of atoms (Remark \ref{rem:preorders}\ref{rem:preorders(1)}).

\section{Finitely generated monoids and beyond}
\label{sec:beyond-fg-mons}
In this section, we focus attention on premonoids that are, in a sense, ``arithmetically small''. The basic idea is formalized in the next definition, which contains some of the main novelties of the paper.

\begin{definition}\label{def:lfgu}
\begin{enumerate*}[label=\textup{(\arabic{*})}, mode=unboxed]
\item\label{def:lfgu(1)}
Given a premonoid $\mathcal H = (H, \preceq)$, we denote by $\llangle x \rrangle_H$ the submonoid of $H$ generated by the divisors of an element $x \in H$ and by $\llangle x\rrangle_\mathcal H$ the subpremonoid of $\mathcal H$ obtained by endowing $\llangle x\rrangle_H$ with the restriction of the preorder $\preceq$. In particular, we call $\llangle x \rrangle_\mathcal H$ the \evid{germ of $\mathcal H$ at $x$}.
\end{enumerate*}

\vskip 0.05cm

\begin{enumerate*}[label=\textup{(\arabic{*})}, mode=unboxed, resume]
\item\label{def:lfgu(2)}
The premonoid $\mathcal H$ is \evid{finitely generated} or \evid{f.g.}~if $H$ is a finitely generated monoid, i.e., $H = \langle A \rangle_H$ for a finite $A \subseteq H$; \evid{finitely generated up to units} or \evid{f.g.u.}~if there exists a finite $A \subseteq H$ such that $H = \allowbreak \langle \mathcal H^\times A\, \mathcal H^\times \rangle_H$; \evid{locally f.g.u.}~or \textsf{l.f.g.u.}~(resp., \evid{locally f.g.}~or \evid{l.f.g.})~if, for each $\preceq$-non-unit $x$, the premonoid $\llb x \rrb_\mathcal{H}$ is f.g.u.~(resp., f.g.); and \evid{weakly l.f.g.u.}~(resp., \evid{weakly l.f.g.})~if the germ of $\mathcal H$ at every $\preceq$-non-unit is an f.g.u.~(resp., f.g.)~premonoid.
\end{enumerate*}

\vskip 0.05cm

\begin{enumerate*}[label=\textup{(\arabic{*})}, resume, mode=unboxed]
\item\label{def:lfgu(3)}
On the other hand, $\mathcal H$ is \evid{of finite type} or \evid{oft} if there is a finite set $A \subseteq \mathscr{I}(\mathcal{H})$ such that every $\preceq$-fac\-tor\-i\-za\-tion of a $\preceq$-non-unit is $\sqeq_\mathcal{H}$-equivalent to an $A$-word; and is \evid{locally of finite type} or \evid{loft} if, for each $\preceq$-non-unit $x$, there is a finite set $A_x \subseteq \mathscr{I}(\mathcal{H})$ such that every $\preceq$-factorization of $x$ is $\sqeq_\mathcal{H}$-e\-quiv\-a\-lent to an $A_x$-word.
\end{enumerate*}

\vskip 0.05cm

\begin{enumerate*}[label=\textup{(\arabic{*})}, resume, mode=unboxed]
\item\label{def:lfgu(4)}
Finally, we say that the monoid $H$ is [weakly] l.f.g., f.g.u., [weakly] l.f.g.u., oft, or loft if $(H, \mid_H)$ is, resp., a [weakly] l.f.g., f.g.u., [weakly] l.f.g.u., oft, or loft premonoid.
\end{enumerate*}
\end{definition}

In particular, the notion of l.f.g.u.~monoid is ultimately a generalization of \cite[Definition 2.7.6.5]{GeHK06} from commutative, cancellative to arbitrary monoids (see Remark \ref{rem:lfgu-monoids}\ref{rem:lfgu-monoids(2)} for further details).

\begin{lemma}
\label{lem:germs-of-fgu-premonos-are-fgu}
Every germ of an f.g.u.~premonoid is itself an f.g.u.~premonoid.
\end{lemma}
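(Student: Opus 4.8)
The plan is to unwind the definitions and locate a finite witness for the f.g.u.\ property of the germ. So fix a finite set $A \subseteq H$ with $H = \langle \mathcal H^\times A\, \mathcal H^\times\rangle_H$, fix $x \in H$, and write $\mathcal K = \llangle x\rrangle_\mathcal H = (K, \preceq_K)$, where $K = \llangle x\rrangle_H$ is generated by the $\mid_H$-divisors of $x$. Recalling from Remark \ref{rem:preorders}\ref{rem:preorders(2)} that $\mathcal K^\times = K \cap \mathcal H^\times$, my goal is to exhibit a finite $B \subseteq K$ with $K = \langle \mathcal K^\times B\, \mathcal K^\times\rangle_K$. Since $K$ is by definition generated (as a monoid) by the divisors of $x$, it will suffice to show that each such divisor already lies in the submonoid of $K$ generated by $\mathcal K^\times B\, \mathcal K^\times$.

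The engine of the argument is the elementary observation that \emph{any factor of a divisor of $x$ is again a divisor of $x$}: if $d \mid_H x$ and $d = \alpha\beta$ in $H$, then $x \in H d H = H\alpha\beta H \subseteq H\alpha H \cap H\beta H$, whence $\alpha \mid_H x$ and $\beta \mid_H x$; iterating, every letter occurring in an arbitrary monoid-word expressing $d$ is a divisor of $x$, and so lies in $K$. I would then set $B := A_x := \{a \in A : a \mid_H x\}$, which is finite (being a subset of $A$) and contained in $K$ (each of its elements being a divisor of $x$).

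To finish, take any divisor $d$ of $x$. Using that $H = \langle \mathcal H^\times A\, \mathcal H^\times\rangle_H$, write $d = u_1 a_1 v_1 \cdots u_n a_n v_n$ with $u_i, v_i \in \mathcal H^\times$ and $a_i \in A$ (the case $d = 1_H$ being trivial). By the observation above, each of the letters $u_i$, $a_i$, $v_i$ divides $x$ and hence lies in $K$; in particular $u_i, v_i \in K \cap \mathcal H^\times = \mathcal K^\times$ and $a_i \in A_x$. Consequently each block $u_i a_i v_i$ belongs to $\mathcal K^\times A_x\, \mathcal K^\times$, and since $K$ is a submonoid of $H$ the product $d = u_1 a_1 v_1 \cdots u_n a_n v_n$ may be read inside $K$; thus $d \in \langle \mathcal K^\times A_x\, \mathcal K^\times\rangle_K$. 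As $d$ was an arbitrary divisor of $x$, I conclude $K = \langle \mathcal K^\times A_x\, \mathcal K^\times\rangle_K$, so $\mathcal K$ is f.g.u.

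The step I expect to be the real (if mild) obstacle is controlling the units $u_i, v_i$: a priori they are produced by the f.g.u.\ factorization in $H$ and need not lie in $K$, and one cannot argue $H u_i H = H$ since a $\preceq$-unit need not be an ordinary unit of $H$. The point is that the ``factor of a divisor is a divisor'' observation never uses that a given factor is a unit — only that it is a factor — so these potentially stray units are swept up as divisors of $x$, and hence as elements of $\mathcal K^\times$, all the same. Once this is in hand the rest is bookkeeping with the definitions.
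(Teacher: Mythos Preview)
Your proof is correct and essentially identical to the paper's: both define $A_x$ as the elements of $A$ dividing $x$ (the paper phrases this as ``$a \mid_H y$ for some divisor $y$ of $x$'', which by transitivity is the same condition), and both pivot on the observation that every factor of a divisor of $x$ is itself a divisor of $x$, forcing the $u_i$, $v_i$, and $a_i$ from an $\mathcal H^\times A\,\mathcal H^\times$-factorization of a divisor $d$ into $K$. Your closing paragraph on why the $\preceq$-units $u_i, v_i$ are not an obstacle makes explicit a point the paper leaves implicit.
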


\begin{proof}
Let $\mathcal H = (H, \preceq)$ be an f.g.u.~premonoid and pick $x \in H$. There then exists a finite $A \subseteq H$ such that $H = \langle \mathcal H^\times A\, \mathcal H^\times \rangle_H$. Denote by $A_x$ the set of all $a \in A$ with the property that $a \mid_H \allowbreak y$ for some divisor $y$ of $x$ in $H$ and by $\mathcal K = (K, \preceq_K)$ the germ of $\mathcal H$ at $x$ (so that $K$ is the submonoid $\llangle x \rrangle_H$ of $H$ generated by the divisors of $x$ in $H$). It is obvious that $A_x$ is a finite subset of $K$, since it is a subset of $A$ and every $a \in A_x$ is a divisor of $x$ in $H$ (by transitivity of $\mid_H$). It follows that $\mathcal K^\times A_x \mathcal K^\times \subseteq K$, and we claim $K \subseteq \allowbreak \langle \mathcal K^\times A_x \mathcal K^\times \rangle_K$: This will show that $\mathcal K$ is an f.g.u.~premonoid and finish the proof.

Fix $y \in H$ such that $y \mid_H x$. Since $K$ is generated by the divisors of $x$ in $H$, it suffices to check that $y \in \langle \mathcal K^\times A_x \mathcal K^\times \rangle_K$. For, note that $H = \langle \mathcal H^\times A\, \mathcal H^\times \rangle_H$ implies the existence of $u_1, v_1, \ldots, u_n, v_n \in \mathcal H^\times$ and $a_1, \ldots, a_n \in A$ such that $y = u_1 a_1 v_1 \cdots u_n a_n v_n$; and since each of the factors on the right is a divisor of a divisor of $x$ in $H$, we have (by the definitions of $K$ and $A_x$) that $u_i, v_i \in K \cap \mathcal H^\times$ and $a_i \in A_x$ for each $i \in \llb 1, n \rrb$. This is enough to conclude, as we know from Remark \ref{rem:preorders}\ref{rem:preorders(2)} that $K \cap \mathcal H^\times = \mathcal K^\times$.
\end{proof}

\begin{proposition}\label{pro:f.g.u. is l.f.g.u. is w.l.f.g.u.}
For a premonoid $\mathcal H = (H, \preceq)$, all implications in the following diagram hold:
\begin{figure}[!h]
\scalebox{0.9}{
\begin{tikzpicture}[shorten >=1pt, auto, node distance=1.75cm, main node/.style={rectangle, rounded corners, minimum width=10mm, minimum height=7mm}]

  \node[main node] (fg) {f.g.};
  \node[main node] (lfg) [right of=fg, xshift=2cm] {l.f.g.};
  \node[main node] (wlfg) [right of=lfg, xshift=2cm] {weakly l.f.g.};
  
  \node[main node] (fgu) [below of=fg] {f.g.u.};
  \node[main node] (lfgu) [right of=fgu, xshift=2cm] {l.f.g.u.};
  \node[main node] (wlfgu) [right of=lfgu, xshift=2cm] {weakly l.f.g.u.};

  \path[every node/.style={font=\sffamily\small,
  		inner sep=1pt}]

    (fg) edge [bend right=0, double, -latex'] node[midway, above=0.25mm] {} (lfg)
        edge [bend left=0, double, -latex'] node {} (fgu)
    (fgu) edge [bend left=0, double, -latex'] node[midway, below=0.5mm] {} (lfgu)
    (lfg) edge [bend left=0, double, -latex'] node {} (wlfg)
        edge [bend left=0, double, -latex'] node {} (lfgu)
    (lfgu) edge [bend left=0, double, -latex'] node {} (wlfgu)
    (wlfg) edge [bend left=0, double, -latex'] node {} (wlfgu);
\end{tikzpicture}
}
\end{figure}

\noindent{}In particular, every f.g.u.~monoid is l.f.g.u.~and every l.f.g.u.~monoid is weakly l.f.g.u.
\end{proposition}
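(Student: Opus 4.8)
The plan is to verify each of the six implications depicted in the diagram, namely f.g. $\Rightarrow$ l.f.g., f.g. $\Rightarrow$ f.g.u., f.g.u. $\Rightarrow$ l.f.g.u., l.f.g. $\Rightarrow$ weakly l.f.g., l.f.g. $\Rightarrow$ l.f.g.u., l.f.g.u. $\Rightarrow$ weakly l.f.g.u., and weakly l.f.g. $\Rightarrow$ weakly l.f.g.u. Several of these are immediate from the definitions: f.g. $\Rightarrow$ f.g.u. follows by taking the witnessing finite generating set $A$ and noting that $A \subseteq \mathcal H^\times A\, \mathcal H^\times$ (since $1_H \in \mathcal H^\times$), so $\langle A \rangle_H \subseteq \langle \mathcal H^\times A\, \mathcal H^\times \rangle_H$; and similarly f.g. $\Rightarrow$ l.f.g. and f.g.u. $\Rightarrow$ l.f.g.u. reduce to showing that each germ-free local piece $\llb x \rrb_\mathcal H$ inherits finite generation from $\mathcal H$.

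**Handling the local implications:**

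For the ``$\Rightarrow$ l.f.g.'' and ``$\Rightarrow$ l.f.g.u.'' arrows, I would argue that if $H = \langle A \rangle_H$ (resp.\ $H = \langle \mathcal H^\times A\,\mathcal H^\times\rangle_H$) with $A$ finite, then every divisor-closed submonoid $\llb x \rrb_H$ is generated by the (finitely many) elements of $A$ that divide some divisor of $x$, reasoning exactly as in the proof of Lemma \ref{lem:germs-of-fgu-premonos-are-fgu}. The key point is that any $y \in \llb x \rrb_H$ can be written as a product of conjugated generators, each of which must itself be a divisor of $x$ and hence lie in the relevant finite subset. For the implications l.f.g. $\Rightarrow$ l.f.g.u.\ and weakly l.f.g. $\Rightarrow$ weakly l.f.g.u., the argument is trivial: an f.g.\ premonoid is a fortiori f.g.u., applied locally.

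**The main arrow and the ``In particular'' claim:**

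The arrow l.f.g.u. $\Rightarrow$ weakly l.f.g.u.\ is the one requiring genuine content, and it is exactly the assertion of the ``In particular'' sentence at the end. Here the plan is to fix a $\preceq$-non-unit $x$ and compare the germ $\llangle x \rrangle_\mathcal H$ with the divisor-closed local premonoid $\llb x \rrb_\mathcal H$. The crucial observation is that $\llangle x \rrangle_H$, being generated by the divisors of $x$, is a \emph{subpremonoid} of $\llb x \rrb_H$ (every divisor of $x$ lies in the divisor-closed submonoid $\llb x \rrb_H$), so the germ is a subpremonoid of the l.f.g.u.\ local piece. Since by hypothesis $\llb x \rrb_\mathcal H$ is f.g.u., I would invoke Lemma \ref{lem:germs-of-fgu-premonos-are-fgu}: the germ of an f.g.u.\ premonoid is f.g.u. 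Applying this lemma to $\llb x \rrb_\mathcal H$ at the element $x$ — noting that the germ of $\llb x \rrb_\mathcal H$ at $x$ coincides with $\llangle x \rrangle_\mathcal H$, because the divisors of $x$ computed inside the divisor-closed submonoid $\llb x \rrb_H$ are the same as those computed in $H$ (Remark \ref{rem:preorders}\ref{rem:preorders(3)}) — yields that $\llangle x \rrangle_\mathcal H$ is f.g.u., which is precisely weak l.f.g.u.

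The main obstacle, and the step I would be most careful about, is this identification of the germ of the \emph{local} premonoid $\llb x \rrb_\mathcal H$ at $x$ with the germ $\llangle x \rrangle_\mathcal H$ of the ambient premonoid; this hinges on divisor-closedness ensuring that ``divisor of $x$ in $H$'' and ``divisor of $x$ in $\llb x \rrb_H$'' agree, which is exactly Remark \ref{rem:preorders}\ref{rem:preorders(3)}. Once this coincidence is established, Lemma \ref{lem:germs-of-fgu-premonos-are-fgu} closes the argument. Finally, the ``In particular'' statement about monoids follows by specializing every implication to the premonoid $(H, \mid_H)$, as sanctioned by Definition \ref{def:lfgu}\ref{def:lfgu(4)}.
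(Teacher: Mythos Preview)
Your proposal is correct and follows essentially the same approach as the paper's proof, including the key use of Lemma~\ref{lem:germs-of-fgu-premonos-are-fgu} for the l.f.g.u.\ $\Rightarrow$ weakly l.f.g.u.\ step and the identification of the germ of $\llb x \rrb_{\mathcal H}$ at $x$ with $\llangle x \rrangle_{\mathcal H}$. One small imprecision worth flagging: in your second paragraph, the generators appearing in a factorization of an arbitrary $y \in \llb x \rrb_H$ are divisors of $y$, not necessarily of $x$ itself, so the correct finite witnessing set is $A \cap \llb x \rrb_H$ (as the paper writes), obtained via divisor-closedness of $\llb x \rrb_H$ rather than the stronger and generally false claim that each such generator divides $x$.
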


\begin{proof}
The implications corresponding to the vertical arrows in the above diagram are obvious. Moreover, showing that an f.g.~premonoid is l.f.g.~and an l.f.g.~premonoid is weakly l.f.g., is pretty much the same as (and in fact easier than) showing that an f.g.u.~premonoid is l.f.g.u.~and an l.f.g.u.~premonoid is weakly l.f.g.u. Lastly, proving that an f.g.u.~premonoid $\mathcal H$ is l.f.g.u., is tantamount to proving that the premonoid $\llb x \rrb_\mathcal{H}$ is f.g.u.~for each $\preceq$-non-unit $x$; and this can be done in a similar way as to Lemma \ref{lem:germs-of-fgu-premonos-are-fgu}, with the set $A_x$ now equal to $A \cap \llb x \rrb_H$ (we leave the details to the reader). Therefore, it only remains to check that an l.f.g.u.~premonoid is weakly l.f.g.u.

For, assume $\mathcal H$ is l.f.g.u.~and fix a $\preceq$-non-unit $x \in H$. Then $\llb x \rrb_{\mathcal{H}}$ is an f.g.u.~premonoid and, by Lemma \ref{lem:germs-of-fgu-premonos-are-fgu}, so is every germ of $\llb x \rrb_{\mathcal{H}}$. It follows that the germ $\llangle x \rrangle_{\mathcal{H}}$ of $\mathcal H$ at $x$ is an f.g.u.~premonoid, because the submonoid $\llangle x \rrangle_H$ of $H$ generated by the divisors of $x$ in $H$ is obviously a submonoid of the smallest divisor-closed submonoid $\llb x \rrb_H$ of $H$ containing $x$ and, on the other hand, it is routine to check that the restriction of the preorder $\preceq$ to $\llangle x \rrangle_H$ is nothing different from the restriction to $\llangle x \rrangle_H$ of the restriction of $\preceq$ to $\llb x \rrb_H$ (with the result that $\llangle x \rrangle_{\mathcal{H}}$ is also the germ of $\llb x \rrb_{\mathcal{H}}$ at $x$). This finishes the proof, since $x$ was an arbitrary $\preceq$-non-unit.
\end{proof}

In the remainder of the section, we will show that not only weakly l.f.g.u.~premonoids are a generalization of l.f.g.u.~monoids (Proposition \ref{pro:f.g.u. is l.f.g.u. is w.l.f.g.u.}), but they are also well behaved with respect to certain properties of interest. First, though, some remarks and examples are in order:

\begin{remarks}\label{rem:lfgu-monoids}
\begin{enumerate*}[label=\textup{(\arabic{*})}, mode=unboxed]
\item\label{rem:lfgu-monoids(1)}
A premonoid $\mathcal H = (H, \preceq)$ is f.g.u.~if and only if there is a finite $A \subseteq H$ such that every $\preceq$-non-unit lies in the submonoid of $H$ generated by $\mathcal H^\times A\, \mathcal H^\times $: In particular, the latter condition implies that $H = \langle \mathcal H^\times \bar{A} \, \mathcal H^\times \rangle_H$, where $\bar{A} := A \cup \{1_H\} \subseteq H$ is still finite (the other direction is obvious).
\end{enumerate*}

\vskip 0.05cm

\begin{enumerate*}[label=\textup{(\arabic{*})}, mode=unboxed, resume]
\item\label{rem:lfgu-monoids(2)}
A Dedekind-finite reduced monoid is l.f.g.u.~if and only if it is l.f.g.; and a commutative monoid is f.g.u.~(resp., l.f.g.u.)~if and only if modding out the group of units gives an f.g.~(resp., l.f.g.)~monoid, cf.~\cite[Propositions 2.7.4.1 and 2.7.8.1]{GeHK06}. 
Moreover, it follows from \cite[Proposition 1.1.7]{GeHK06} that a cancellative, commutative monoid $H$ is l.f.g.u.~if and only if, for every $x \in H$, the set of (ordinary) atoms $a \in H$ such that $a \mid_H x$ is finite up to associates; and from \cite[Proposition 2.7.8.4]{GeHK06} that every cancellative, commutative, l.f.g.u.~monoid is FF-atomic. This last result will be generalized in Corollary \ref{cor:comm-unit.canc-weakly.lfgu-is-FFatom}.
\end{enumerate*}

\vskip 0.05cm

\begin{enumerate*}[label=\textup{(\arabic{*})}, mode=unboxed, resume]
\item\label{rem:lfgu-monoids(3)}
We say that a monoid $H$ is \textsf{idf} (for  irreducible-divisor-finite) if, for every $\mid_H$-non-unit $x$, the set $\mathscr I_x(H)$ of irreducible divisors of $x$ is finite up to $\mid_H$-equivalence. This is ultimately a generalization of (commutative) idf-domains in the sense of A.~Grams and H.~Warner \cite{Gra-War75}, which are in turn a generalization of Cohen-Kaplansky domains, i.e., idf-domains where each non-zero non-unit is a product of atoms \cite{CoKa46, AndMo90}. \\

\indent{}We claim that every idf monoid $H$ is loft. For, let $x$ be a $\mid_H$-non-unit, $a_1, \ldots, a_n$ be representatives of the (finitely many) $\mid_H$-equivalence classes in $\mathscr I_x(H)$, and $b_1 \ast \dots \ast b_k$ be a non-empty $\mid_H$-factorization of $x$ of length $k$. Then $b_i \in \mathscr I_x(H)$ for each $i \in \llb 1, k \rrb$ and there is an index $j_i \in \llb 1, n\rrb$ such that $b_i$ is $\mid_H$-equivalent to $a_{j_i}$. It is then clear from Definition \ref{def:shuffling-preorder} that $b_1\ast \dots \ast b_k$ is $\sqeq_\mathcal{H}$-equivalent to the $A_x$-word $a_{j_1}\ast \dots \ast a_{j_k}$, where $\mathcal H := (H, \mid_H)$ and  $A_x := \{a_1, \ldots, a_n\}$. Thus $H$ is loft (by the arbitrariness of $x$).
\end{enumerate*}
\end{remarks}

\begin{examples}\label{exa:lfgu-premonoids}
\begin{enumerate*}[label=\textup{(\arabic{*})},mode=unboxed]
\item\label{exa:lfgu-premonoids(1)}
A list of cancellative, commutative, l.f.g.u.~monoids (and, in particular, of commutative domains whose non-zero elements form an l.f.g.u.~monoid under multiplication) can be found in \cite[Example 3.4]{Tr19a} and \cite[Example 2.1]{Zh19}. The list includes Cohen-Kaplansky domains (Remark \ref{rem:lfgu-monoids}\ref{rem:lfgu-monoids(3)}), Krull monoids \cite[Definition 2.3.1.5]{GeHK06}, Krull (and especially Dedekind) domains \cite[Definition 2.10.1.1]{GeHK06}, rings of integer-valued polynomials over a unique factorization domain \cite{Rei14}, numerical monoids \cite{As-GaSa16}, etc.
\end{enumerate*}

\vskip 0.05cm

\begin{enumerate*}[label=\textup{(\arabic{*})},mode=unboxed, resume]
\item\label{exa:lfgu-premonoids(2a)} Fix $k \in \mathbb N^+$. Every submonoid of $(\mathbb N^{\times k}, +)$ is weakly l.f.g., because $(y_1, \ldots, y_k)$ divides $(x_1, \ldots, x_k)$ in $(\mathbb N^{\times k}, +)$ only if $y_i \in \llb 0, x_i \rrb$ for each $i \in \mathbb N$. However, the submonoid $H$ of $(\mathbb N^{\times 2}, +)$ generated by the set $A := \bigcup_{n \ge 1} \{(1, n), (n,1)\}$ is not l.f.g.u.: In fact, $H$ is a reduced monoid with $\mathscr A(H) = A$ (implying that $H$ is not f.g.u.); and since $n\,(1,1) = (1,n) + (n,1)$ for all $n \in \mathbb N^+$, we have $\llb (1,1) \rrb_H = H$.
\end{enumerate*}

\vskip 0.05cm

\begin{enumerate*}[label=\textup{(\arabic{*})},mode=unboxed, resume]
\item\label{exa:lfgu-premonoids(2)} Following \cite[Sect.~3]{Fa-Tr18}, let $\mathcal P_{\mathrm{fin},1}(H)$ be the \evid{reduced power monoid} of a monoid $H$, i.e., the monoid obtained by endowing the family of all \emph{finite} subsets of $H$ containing the identity $1_H$ with the binary operation of \evid{setwise multiplication} $(X, Y) \mapsto XY$.
We gather from \cite[Proposition 4.11(i)]{Tr20(c)} that $\mathcal P_{\mathrm{fin},1}(H)$ is a Dedekind-finite reduced monoid; and this implies by Example \ref{exa:preord-mons}\ref{exa:preord-mons(1)} that, endowed with the divisibility preorder, $\mathcal P_{\mathrm{fin},1}(H)$ is a weakly positive monoid. Furthermore, it is easily checked that $\mathcal P_{\mathrm{fin},1}(H)$ is loft and weakly l.f.g.u.: Both of these properties follow at once from considering that, if $Y$ is a divisor of $X$ in $\mathcal P_{\mathrm{fin},1}(H)$, then $Y$ is contained in $X$ and hence $X$ has finitely many divisors (in fact, no more than $2^{|X|-1}$ of them), because $Y = 1_H Y 1_H \subseteq UYV$ for all $U, V \in \mathcal P_{\mathrm{fin},1}(H)$. \\

\indent{}On the other hand, $\mathcal P_{\mathrm{fin},1}(H)$ need not be an l.f.g.u.~monoid. For, denote by $\mathcal P_{\mathrm{fin},0}(\mathbb N)$ the reduced power monoid of the additive monoid of non-negative integers and write the operation of $\mathcal P_{\mathrm{fin},0}(\mathbb N)$ additively. For every finite set $X \subseteq \mathbb N$ with $0 \in X$, there is an integer $n \ge 1$ such that $n\,\{0, 1\} = X + Y$ for some $Y \in \allowbreak \mathcal P_{\mathrm{fin},0}(\mathbb N)$: It suffices to take $n = 2\max(X)$ and $Y = \llb 0, \max(X) \rrb$. Thus, the smallest divisor-closed submonoid of $\mathcal P_{\mathrm{fin},0}(\mathbb N)$ containing $\{0, 1\}$ is $\mathcal P_{\mathrm{fin},0}(\mathbb N)$ itself; and since $\mathcal P_{\mathrm{fin},0}(\mathbb N)$ is reduced and has infinitely many atoms (for instance, we get from \cite[Proposition 4.1(iv)]{Fa-Tr18} that any set of the form $\{0, a\}$ with $a \in \allowbreak \mathbb N^+$ is an atom), we conclude that the monoid is not l.f.g.u. \\

\indent{}Lastly, we note that $\mathcal P_{\mathrm{fin},1}(H)$ is finite if and only if so is $H$, which gives a class of finite monoids with a rich arithmetic already in the case when $H$ is a finite cyclic group \cite[Sect.~5]{An-Tr18}. Moreover, the $\mid_{\mathcal P_{\mathrm{fin},1}(H)}$-irreducibles of $\mathcal P_{\mathrm{fin},1}(H)$ are not necessarily atoms (whereas the $\mid_{\mathcal P_{\mathrm{fin},1}(H)}$-atoms are atoms, since $\mathcal P_{\mathrm{fin},1}(H)$ is Dedekind-finite): In fact, it is known from \cite[Proposition 4.11(iii)]{Tr20(c)} that every $\mid_{\mathcal P_{\mathrm{fin},1}(H)}$-irreducible is a $\mid_{\mathcal P_{\mathrm{fin},1}(H)}$-quark (and vice versa); and from \cite[Theorem 4.12]{Tr20(c)} that every $\mid_{\mathcal P_{\mathrm{fin},1}(H)}$-irreducible is an atom if and only if $1_H \ne x^2 \ne x$ for each non-identity $x \in H$.
\end{enumerate*}

\vskip 0.05cm

\begin{enumerate*}[label=\textup{(\arabic{*})},mode=unboxed, resume]
\item\label{exa:lfgu-premonoids(3)}
Let $G_0$ be a subset of a (multiplicatively written) group $G$ and $\mathscr F_{\rm ab}(G)$ be the \evid{free abelian monoid} on $G$, namely, the quotient of the free monoid $\mathscr F(G)$ by the monoid congruence $\equiv$ that identifies two $G$-words if and only if they can be obtained from each other by a permutation of their letters. 
We will use $\bar{\mathfrak u}$ for the (congruence) class in $\mathscr F_{\rm ab}(G)$ of a $G$-word $\mathfrak u$ and write $\mathscr F_{\rm ab}(G)$ multiplicatively.\\

\indent{}Consider the function $\Pi_G \colon \mathscr F_{\rm ab}(G) \to G$ that maps the class of a $G$-word to its direct image under the factorization homomorphism $\pi_G \colon \mathscr F(G) \to G$ of $G$ (note that $\Pi_G$ is well defined, because if $\mathfrak u \equiv \mathfrak v$ then $\bar{\mathfrak u} = \bar{\mathfrak v}$). 
The set of all classes $\bar{\mathfrak u}$ with $\mathfrak u \in \mathscr F(G_0)$ and $1_G \in \Pi_G(\mathfrak u)$ forms then a submonoid of $\mathscr F_{\rm ab}(G)$, called the \evid{monoid of product-one sequences over $G$ with support in $G_0$} and herein denoted by $\mathcal B(G_0)$: This is a monoid with a central role in the classical theory of factorization since it can often be used as a ``canonical model'' for much more complicated objects (see \cite{Fad-Zho22} and references therein).\\

\indent{}It is fairly obvious that $\mathcal B(G_0)$ is a cancellative, commutative, weakly l.f.g.u.~monoid with trivial group of units: In particular, if $\mathfrak u$ and $\mathfrak v$ are $G_0$-words with $\bar{\mathfrak u} \mid_{\mathcal B(G_0)} \bar{\mathfrak v}$, then $\mathfrak u$ is a \evid{scrambled subword} of $\mathfrak v$, i.e., there exists an injective function $\sigma \colon \llb 1, \| \mathfrak u \|_G \rrb \to \llb 1, \| \mathfrak v \|_G \rrb$ such that $\mathfrak u[i] = \mathfrak v[\sigma(i)]$ for each $i \in \llb 1, \| \mathfrak u \|_G \rrb$; and there are, of course, finitely many scrambled subwords of $\mathfrak v$. Yet, $\mathcal B(G)$ need not be an l.f.g.u.~monoid. \\

\indent{}For, let $G$ be a dihedral group of infinite order with generators $\alpha$ and $\tau$ such that $\tau^2 = 1_G$ and $\alpha \,\tau = \allowbreak \tau \alpha^{-1}$. The $G$-word $\mathfrak u_n := \alpha^{\ast 2n} \ast \tau^{\ast 2}$ rep\-re\-sents a product-one sequence $\bar{\mathfrak u}_n$ over $G$ for each $n \in \mathbb N^+$, because $(\alpha^n \, \tau)^2 = \alpha^n \tau^2 \alpha^{-n} = 1_G$; and it is routine to check that $\bar{\mathfrak u}_n$ is in fact an atom of $\mathcal B(G)$. It follows that $\llb \bar{u}_1 \rrb_{\mathcal B(G)}$ is not an f.g.u.~monoid, since $\bar{\mathfrak u}_n \mid_{\mathcal B(G)} \bar{\mathfrak u}_1^{\,n}$ for all $n \in \mathbb N^+$. So, $\mathcal B(G)$ is not l.f.g.u.
\end{enumerate*}

\vskip 0.05cm

\begin{enumerate*}[label=\textup{(\arabic{*})},mode=unboxed, resume]
\item\label{exa:lfgu-premonoids(4)}
Let $R$ be a commutative PID and $\mathcal M_n(R)$ be the ring of $n$-by-$n$ matrices over $R$ (we refer to \cite{Jac85} for basic aspects of ring theory). It is well known that the set of matrices $A \in \mathcal M_n(R)$ with non-zero de\-ter\-mi\-nant is a cancellative submonoid, herein denoted by $H$, of the multiplicative monoid of $\mathcal M_n(R)$. In consequence, $H$ is Dedekind-finite (as is the case with any cancellative monoid) and, by Remark \ref{rem:preorders}\ref{rem:preorders(1)}, the $\mid_H$-units are exactly the units of $\mathcal M_n(R)$, i.e., the elements of the general linear group ${\rm GL}_n(R)$. \\

\indent{}We claim that $H$ is a weakly l.f.g.u.~monoid. For, let $A$ be a non-unit of $H$. Since $R$ is a commutative PID and hence a UFD, there are an integer $k \ge 1$ and prime elements $p_1, \ldots, p_k \in R$ such that $\det A = \allowbreak p_1 \cdots p_k$. 
Now let $B$ be a divisor of $A$ in $H$. By \cite[Theorem 3.8]{Jac85} (namely, by the existence of a Smith normal form for matrices over a commutative PID), we can find $U, V \in H^\times$ and a diagonal matrix $D := \allowbreak \diag(b_1, \ldots, b_n) \in H$ such that $B = UDV$. It follows (by the Cauchy-Binet formula) that $\det D = b_1\cdots b_n$ divides $\det(A) = p_1 \cdots p_k$ in $R$ and, by the elementary properties of UFDs, there are a sequence $I_1, \dots, I_n$ of pairwise disjoint subsets of $\llb 1, k \rrb$ and units $u_1, \ldots, u_n \in R$ such that $b_i = u_i \wp_i$ for every $i \in \llb 1, n \rrb$, where $\wp_i := \prod_{j \in I_i} p_j$. We can thus write $B = U'D'V$, where $U' := U \diag(u_1, \ldots, u_n) \in H^\times$ and $D' := \allowbreak \diag(\wp_1, \ldots, \wp_n) \in H$. Since there are finitely many choices for the $n$-tuple $(I_1, \dots, I_n)$ and hence for the definition of the matrix $D'$, 
we therefore conclude from the arbitrariness of $B$ that $A$ has finitely many divisors up to associates and then, from the arbitrariness of $A$, that $H$ is a weakly l.f.g.u.~monoid.
\end{enumerate*}

\vskip 0.05cm

\begin{enumerate*}[label=\textup{(\arabic{*})},mode=unboxed, resume]
\item\label{exa:lfgu-premonoids(5)}
We say that a premonoid $(H, \preceq)$ is \evid{finite} if so is the monoid $H$. We have already encountered finite premonoids in Example \ref{exa:properties-of-factorizations}\ref{exa:properties-of-factorizations(1)}. Further examples arising from the interplay between factorization theory and additive combinatorics are given by restricted power monoids (see item \ref{exa:lfgu-premonoids(2)}) of finite abelian groups. Of course, every finite premonoid is loft and f.g., regardless of the actual choice of the preorder.
\end{enumerate*}
\end{examples}

We will see that, under some circumstances, weakly l.f.g.u.~premonoids are factorable (Corollary \ref{cor:wlfgu weakly positive is factorable}), FmF-atomic (Theorem \ref{thm: sufficient for loftness}), or even FF-atomic (Corollary \ref{cor:weakly-lfgu-strongly-pos-is-FF-atom}). We start with a preliminary lemma.

\begin{lemma}\label{lem:removing-the-units}
Let $A$ and $Q$ be subsets of a monoid $H$. If $Q^2 \subseteq Q$, then $\langle QAQ \rangle_H \subseteq \allowbreak Q \cup \allowbreak \langle Q \,(A \setminus Q)\, Q \rangle_H$.
\end{lemma}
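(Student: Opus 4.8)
The plan is to realize the right-hand side as a submonoid of $H$ that already contains the generating set $QAQ$, and then invoke the fact that $\langle QAQ\rangle_H$ is, by definition, the \emph{smallest} submonoid of $H$ containing $QAQ$. Concretely, set $B := A \setminus Q$ and $S := Q \cup \langle QBQ\rangle_H$, so that the goal becomes $\langle QAQ\rangle_H \subseteq S$. It then suffices to check two things: (i) $QAQ \subseteq S$, and (ii) $S$ is a submonoid of $H$. Once both are in place, minimality of $\langle QAQ\rangle_H$ forces $\langle QAQ\rangle_H \subseteq S$, which is exactly the claim.

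For step (i) I would split a generator $qaq' \in QAQ$, with $q, q' \in Q$ and $a \in A$, according to whether $a \in Q$ or not. If $a \in Q$, then two applications of $Q^2 \subseteq Q$ give $qaq' \in Q \subseteq S$; if instead $a \notin Q$, i.e.\ $a \in B$, then $qaq' \in QBQ \subseteq \langle QBQ\rangle_H \subseteq S$. Since $1_H \in \langle QBQ\rangle_H$, this part is routine.

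Step (ii), closure of $S$ under multiplication, carries the real content, and I expect the mixed products to be the main obstacle. Products of two elements of $Q$ stay in $Q$ (again by $Q^2 \subseteq Q$), and products of two elements of $\langle QBQ\rangle_H$ stay in that submonoid; the only delicate cases are the \emph{mixed} products $q\,t$ and $t\,q$ with $q \in Q$ and $t \in \langle QBQ\rangle_H$. Here I would use an absorption trick: if $t = 1_H$ then $q\,t = q \in Q \subseteq S$, and otherwise I write $t = v_1 \cdots v_r$ as a nonempty product of generators $v_j = q_j b_j q_j' \in QBQ$ and rewrite $q\,t = q\,(q_1 b_1 q_1')\,v_2 \cdots v_r = (q q_1)\,b_1\,q_1'\,v_2 \cdots v_r$; since $q q_1 \in Q$ by $Q^2 \subseteq Q$, the new first factor again lies in $QBQ$, so $q\,t \in \langle QBQ\rangle_H \subseteq S$. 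The product $t\,q$ is handled symmetrically by absorbing $q$ into the last generator $v_r$.

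The crux — and the step I would phrase most carefully — is thus this mixed case together with the bookkeeping of the identity. Note that $Q$ need not contain $1_H$, since the hypothesis $Q^2 \subseteq Q$ only makes $Q$ a subsemigroup; hence $1_H$ must be supplied by $\langle QBQ\rangle_H$, and the absorption argument has to be written so that it survives the degenerate possibilities $t = 1_H$ (and, symmetrically, an empty factor on the other side). Everything else reduces to repeated use of $Q^2 \subseteq Q$, so the whole proof hinges on that single closure hypothesis being applied both to keep products of $Q$-elements inside $Q$ and to keep the decorated boundary generators inside $QBQ$.
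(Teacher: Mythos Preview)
Your argument is correct and takes a genuinely different route from the paper. The paper picks a generic element $x = u_1 a_1 v_1 \cdots u_n a_n v_n$ of $\langle QAQ\rangle_H$ and argues by induction on $n$: if some $a_j \in Q$, it absorbs $a_j$ into the adjacent $Q$-factors (using $Q^2 \subseteq Q$) to reduce to a product of length $n-1$, eventually landing in $Q$ (if all the $a_i$'s get absorbed) or in $\langle Q(A\setminus Q)Q\rangle_H$ (once no $a_i$ lies in $Q$). You instead show directly that $S = Q \cup \langle Q(A\setminus Q)Q\rangle_H$ is a submonoid containing $QAQ$ and then invoke minimality of $\langle QAQ\rangle_H$. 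Both proofs rest on the same absorption trick, but yours packages it more structurally: the induction is hidden inside the universal property of the generated submonoid, and the case analysis is cleaner (four closure cases rather than a somewhat fiddly index-juggling rewrite). The paper's version has the minor advantage of being entirely self-contained and elementwise, which fits its hands-on style; yours is shorter and arguably more transparent about \emph{why} the lemma holds.
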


\begin{proof}
Let $x \in H$ be an element of the form $u_1 a_1 v_1 \cdots u_n a_n v_n$ (with $n \in \mathbb N^+$), where $u_1, v_1, \ldots, \allowbreak u_n, \allowbreak v_n \in Q$ and $a_1, \ldots, \allowbreak a_n \in A$. We have to prove that $x$ is either in $Q$ or in $\langle Q\,(A \setminus Q)\,Q \rangle_H$.

We may assume that there exists $j \in \allowbreak \llb 1, n \rrb$ with $a_j \in Q$; otherwise, $a_1, \ldots, a_n$ are all in $A \setminus Q$ and there is nothing left to prove because $x \in \langle Q\,(A \setminus Q)\,Q \rangle_H$. If $n = 1$, then $j = 1$ and $x \in Q^3$, so again we are done because $Q^k \subseteq Q$ for all $k \in \mathbb N^+$ (by the hypothesis that $Q^2 \subseteq Q$). If, on the other hand, $n \ge 2$, then we can write $x = \bar{u}_1 b_1 \bar{v}_1 \cdots \bar{u}_{n-1} b_{n-1} \bar{v}_{n-1}$, where we put
	$$
	\bar{u}_i := 
	\left\{
	\begin{array}{ll}
	\! u_i & \text{if } 1 \le i < j \\
	\! u_i a_i v_i u_{i+1} & \text{if } i = j \ne n \\
	\! u_{i+1} & \text{if } j < i \le n-1
	\end{array}
	\right.\!
	\quad\text{and}\quad
	\bar{v}_i := 
	\left\{
	\begin{array}{ll}
	\! v_i & \text{if } 1 \le i < j \le n-1 \\
	\! v_{i+1} & \text{if } j \le i \le n-1 \\
	\! v_i u_{i+1} a_{i+1} v_{i+1} & \text{if } i+1 = j = n
	\end{array}
	\right.\!,
	$$
	and we take $b_i := a_i$ for $1 \le i < j$ and $b_i := a_{i+1}$ for $j \le i \le n-1$.
	It follows, by induction on $n$, that $x \in H$ (as wished), since it is clear from the above that $\bar u_i, \bar v_i \in Q$ and $b_i \in A$ for all $i \in \llb 1, n-1 \rrb$.
\end{proof}

For the next theorem we recall from Definition \ref{def:factorizations}\ref{def:factorizations(4)} that a premonoid $\mathcal H = (H, \preceq)$ is factorable if each $\preceq$-non-unit can be written as a product of $\preceq$-irreducibles; and from Definition \ref{def:preordered-monoids-et-alia} that $\mathcal H$ is a weakly positive monoid if $\mathcal H^\times x \mathcal{H}^\times \preceq x \preceq HxH$ for every $x \in H$.

\begin{theorem}\label{thm:fgu weakly positive}
Let $\mathcal H = (H, \preceq)$ be an f.g.u.~weakly positive monoid. There then exists a finite set $A$ of $\preceq$-irreducibles such that every $\preceq$-non-unit can be written as a product of elements of $\mathscr I(\mathcal H) \cap \mathcal H^\times A \,\mathcal H^\times$. In particular, $\mathcal H$ is factorable.
\end{theorem}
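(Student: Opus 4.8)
The plan is to combine Lemma \ref{lem:removing-the-units} with an induction on a suitable ``complexity'' of a $\preceq$-non-unit, and then to promote the resulting generating set to a \emph{finite} set of $\preceq$-irreducibles. First I would use the f.g.u.\ hypothesis to fix a finite $A_0 \subseteq H$ with $H = \langle \mathcal H^\times A_0\, \mathcal H^\times\rangle_H$. Since $\mathcal H$ is weakly positive, Remark \ref{rem:premonoids}\ref{rem:premonoids(3)} tells us that $\mathcal H^\times$ is a submonoid (so $Q := \mathcal H^\times$ satisfies $Q^2 \subseteq Q$) and that the $\preceq$-non-units form a two-sided ideal. Applying Lemma \ref{lem:removing-the-units} with this $Q$ and $A = A_0$, and discarding the units, I get that \emph{every} $\preceq$-non-unit is a product of elements of $\mathcal H^\times A'\, \mathcal H^\times$, where $A' := A_0 \setminus \mathcal H^\times$ is a finite set of $\preceq$-non-units (note that $\mathcal H^\times A'\, \mathcal H^\times \subseteq HA'H$ consists of $\preceq$-non-units, again by Remark \ref{rem:premonoids}\ref{rem:premonoids(3)}). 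For a $\preceq$-non-unit $x$ I then let $n(x)$ be the least number of factors from $A'$ needed in such an expression; this is a well-defined positive integer, because an expression with no $A'$-factor would place $x$ in the submonoid generated by $\mathcal H^\times$, i.e.\ among the $\preceq$-units.

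The core of the argument is a strong induction on $n(x)$ showing that every $\preceq$-non-unit is a product of elements of $\mathcal I_0 := \mathscr I(\mathcal H) \cap \mathcal H^\times A'\, \mathcal H^\times$. The base case $n(x) = 1$ is the key observation: if $x$ were a product $yz$ of two $\preceq$-non-units, then $y$ and $z$ would each contribute at least one $A'$-factor, forcing $n(x) \ge 2$; hence $x$ is a $\preceq$-atom, in particular a $\preceq$-irreducible, and by construction $x \in \mathcal H^\times A'\, \mathcal H^\times$, so $x \in \mathcal I_0$. For the inductive step with $n(x) = n \ge 2$, I would fix a shortest expression $x = w_0 a_1 w_1 \cdots a_n w_n$ (with $w_i \in \mathcal H^\times$ and $a_i \in A'$) and simply split it as $x = bc$ with $b := w_0 a_1 \cdots a_{n-1} w_{n-1}$ and $c := a_n w_n$. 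Both factors contain an $A'$-letter, hence are $\preceq$-non-units by the ideal property, and they satisfy $n(b), n(c) < n$; so the induction hypothesis writes each of them as a product of $\mathcal I_0$-elements, and therefore so does $x$. The point worth stressing is that I never need to decide whether $x$ itself is $\preceq$-irreducible: splitting the \emph{minimal} expression sidesteps the awkward possibility of a $\preceq$-irreducible with $n(x) \ge 2$, which is precisely where a naive ``irreducible-or-factor'' recursion would stall.

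It remains to replace $A'$ by a finite set of genuine $\preceq$-irreducibles, and this is the step I expect to be the main obstacle. I would prove that left/right multiplication by $\preceq$-units reflects reducibility: if $a \in A'$ is \emph{not} $\preceq$-irreducible, say $a = bc$ with $\preceq$-non-units $b \prec a$ and $c \prec a$, then for all $u, v \in \mathcal H^\times$ one has $uav = (ub)(cv)$ with $ub, cv$ $\preceq$-non-units, and weak positivity gives both $ub \prec uav$ and $cv \prec uav$. Here $ub \preceq uav$ comes from $ub \mid_H uav$, while $uav \not\preceq ub$ follows from the $\preceq$-equivalences $uav \preceq a \preceq uav$ and $ub \preceq b \preceq ub$ (instances of $\mathcal H^\times x\, \mathcal H^\times \preceq x \preceq HxH$ at $x = a$ and $x = b$) together with $a \not\preceq b$. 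Thus $uav$ is reducible, so \emph{every} $\preceq$-irreducible of the form $uav$ already forces $a \in \mathscr I(\mathcal H)$. Consequently $\mathcal I_0 = \mathscr I(\mathcal H) \cap \mathcal H^\times A\, \mathcal H^\times$ for the finite set $A := A' \cap \mathscr I(\mathcal H)$ of $\preceq$-irreducibles, which is exactly the set demanded by the statement; factorability is then immediate, since each $\preceq$-non-unit has been exhibited as a product of $\preceq$-irreducibles.
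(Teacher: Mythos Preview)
Your proof is correct, and it follows a genuinely different route from the paper's.

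The paper proceeds in two parts. In Part~1 it takes a finite $A$ of \emph{minimal cardinality} with $H \setminus \mathcal H^\times \subseteq \langle \mathcal H^\times A\, \mathcal H^\times \rangle_H$ and argues, via Lemma~\ref{lem:removing-the-units} and weak positivity, that any non-irreducible $a \in A$ could be expressed through $A \setminus \{a\}$, contradicting minimality; so $A$ already consists of $\preceq$-irreducibles. Part~2 then shows $\mathcal H^\times A\, \mathcal H^\times \subseteq \langle I_A \rangle_H$ by an infinite-descent argument: assuming an $\alpha \in \mathcal H^\times A\, \mathcal H^\times \setminus \langle I_A \rangle_H$, one recursively produces a strictly $\preceq$-decreasing sequence in $\mathcal H^\times A\, \mathcal H^\times$, which (since each term is $\preceq$-equivalent to some $a \in A$) contradicts the finiteness of $A$.

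Your argument replaces both parts by a single strong induction on the minimal number $n(x)$ of $A'$-factors, with the pleasant base-case observation that $n(x) = 1$ forces $x$ to be a $\preceq$-\emph{atom} (not just a $\preceq$-irreducible), so in particular $x \in \mathcal I_0$. The refinement from $A'$ to $A := A' \cap \mathscr I(\mathcal H)$ via the ``unit-conjugation reflects reducibility'' lemma is correct and plays the role of the paper's Part~1. Your approach is more elementary in that it avoids the descent argument entirely and never relies on a cardinality-minimality assumption; the paper's approach, on the other hand, isolates the conceptually clean fact that any cardinality-minimal finite generating set modulo $\preceq$-units consists of $\preceq$-irreducibles, which may be of independent interest.
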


\begin{proof}
Let the set of $\preceq$-non-units be non-empty, or else the conclusion is obvious. We split the proof into two parts: In \textsc{Part 1}, we prove that there is a finite set $A \subseteq \mathscr I(\mathcal H)$ such that $H \setminus \mathcal H^\times \subseteq \langle \mathcal H^\times A\, \mathcal H^\times \rangle_H$; and in \textsc{Part 2}, that $H \setminus \mathcal H^\times \subseteq \langle \mathscr I(\mathcal H) \cap \mathcal H^\times A\, \mathcal H^\times \rangle_H$.

\vskip 0.05cm

\textsc{Part 1:} By the hypothesis that $\mathcal H$ is an f.g.u.~premonoid, there is a subset $A$ of $H$ with $|A| < \infty$ and $H \setminus \mathcal H^\times \subseteq \langle \mathcal H^\times A \, \mathcal H^\times \rangle_H$; and it is, of course, harmless to assume (as we do) that 
\begin{equation}\label{equ:minimal-in-size}
H \setminus \mathcal H^\times \not\subseteq \langle \mathcal H^\times B \, \mathcal H^\times \rangle_H,
\qquad
\text{for every }
B \subseteq H \text{ with } 
|B| < \allowbreak |A|.
\end{equation}
On the other hand, since $\mathcal H$ is a weakly positive monoid, we have from Remark \ref{rem:premonoids}\ref{rem:premonoids(3)} that $(\mathcal H^\times)^2 = \mathcal H^\times$. Therefore, we gather from the above and Lemma \ref{lem:removing-the-units} that 
\[
\emptyset \ne H \setminus \mathcal H^\times \subseteq \langle \mathcal H^\times A \, \mathcal H^\times \rangle_H \setminus \mathcal H^\times \subseteq \langle \mathcal H^\times (A \setminus \mathcal H^\times) \, \mathcal H^\times \rangle_H,
\]
which, in view of Equ.~\eqref{equ:minimal-in-size}, is only possible if $A$ is a set of $\preceq$-non-units. Suppose by way of contradiction that there is an element $a \in A$ that is not $\preceq$-irreducible. Then $a = xy$ for some $\preceq$-non-units $x, y \in H$ with $x \prec \allowbreak a$ and $y \prec a$ (as we have just found that $a$ is a $\preceq$-non-unit); and since every $\preceq$-non-unit lies in the submonoid of $H$ generated by $\mathcal H^\times A \, \mathcal H^\times$, there exist $a_1, \ldots, a_{m+n} \in A$ (with $m, n \in \mathbb N^+$) such that 
\[
x \in \mathcal H^\times a_1 \mathcal H^\times \cdots \mathcal H^\times a_m \mathcal H^\times
\quad\text{and}\quad
y \in \mathcal H^\times a_{m+1} \mathcal H^\times \cdots \mathcal H^\times a_{m+n} \mathcal H^\times.
\]
Thus, using again that $\mathcal H$ is a weakly positive monoid (and hence, in particular, $z \preceq HzH$ for each $z \in \allowbreak H$), we conclude that $a_i \preceq x \prec a$ for every $i \in \llb 1, m \rrb$ and $a_{m+i} \preceq y \prec a$ for every $i \in \llb 1, n \rrb$; that is to say, $a_i \ne a$ for all $i \in \llb 1, m+n \rrb$. This, however, means that $a = xy$ lies in the submonoid of $H$ generated by $\mathcal H^\times \bar{A} \, \mathcal H^\times$, where $\bar{A} := A \setminus \{a\}$. It follows that 
\[
H \setminus \mathcal H^\times \subseteq \langle \mathcal H^\times A \, \mathcal H^\times \rangle_H \subseteq \langle \mathcal H^\times \bar{A} \, \mathcal H^\times \rangle_H,
\]
which is in contradiction with Eq.~\eqref{equ:minimal-in-size} and ultimately shows that $A$ is a set of $\preceq$-irreducibles. 

\vskip 0.05cm

\textsc{Part 2:} It remains to see that every $\preceq$-non-unit factors as a product of elements from the set $I_A := \mathscr I(\mathcal H) \cap \allowbreak \mathcal H^\times A\, \mathcal H^\times$; and since $H \setminus \mathcal H^\times \subseteq \langle \mathcal H^\times A\, \mathcal H^\times \rangle_H$, it will suffice to check that $\mathcal H^\times A\, \mathcal H^\times \subseteq \langle I_A \rangle_H$. 

Assume to the contrary that $\alpha \notin \langle I_A \rangle_H$ for some $\alpha \in \mathcal H^\times A\, \mathcal H^\times$; we will freely use that, by Remark \ref{rem:premonoids}\ref{rem:premonoids(3)}, $H \setminus \mathcal H^\times$ is a two-sided ideal of $H$ and hence $\mathcal H^\times A \mathcal H^\times$ is a set of $\preceq$-non-units. We claim that there exists a se\-quence $\alpha_1, \alpha_2, \ldots$ of elements of $\mathcal H^\times A\, \mathcal H^\times \setminus \langle I_A \rangle_H$ with $\alpha_{i+1} \prec \alpha_i$ for all $i \in \mathbb N^+$. For, put $\alpha_1 := \allowbreak \alpha$ and suppose that, for a certain $k \in \mathbb N^+$, we have recursively found $\alpha_1, \ldots, \alpha_k \in \mathcal H^\times A\, \mathcal H^\times \setminus \langle I_A \rangle_H$ with the property that $\alpha_{i+1} \prec \alpha_i$ for each $i \in \llb 1, k-1 \rrb$. Since $\alpha_k$ is neither a $\preceq$-unit nor a $\preceq$-irreducible (or else $\alpha_k \in \allowbreak I_A \subseteq \allowbreak \langle I_A \rangle_H$), we have $\alpha_k = xy$ for some $\preceq$-non-units $x, y \in H$ with $x \prec \alpha_k$ and $y \prec \alpha_k$. So, there are $u_1, \allowbreak v_1, \ldots, \allowbreak u_{m+n}, v_{m+n} \in \mathcal H^\times$ and $a_1, \ldots, a_{m+n} \in A$ (with $m, n \in \mathbb N^+$) such that 
\[
x = u_1 a_1 v_1 \cdots u_n a_n v_n
\quad\text{and}\quad
y = u_{m+1} a_{m+1} v_{m+1} \cdots u_{m+n} a_{m+n} v_{m+n},
\]
which is only possible if $\alpha_{k+1} := u_i a_i v_i \notin \langle I_A \rangle_H$ for some $i \in \llb 1, m+n \rrb$ (or else $\alpha_k = xy \in \langle I_A \rangle_H$). Since, on the other hand, $\alpha_{k+1} \preceq x \prec \alpha_k$ or $\alpha_{k+1} \preceq y \prec \alpha_k$ (by the hypothesis that $\mathcal H$ is a weakly positive monoid), this is enough to prove our claim (by induction).

Considering that every $\alpha \in \mathcal H^\times A \,\mathcal H^\times$ is $\preceq$-equivalent to an element $a \in A$ (again by the weak positivity of $\mathcal H$), it follows that there is a sequence $a_1, a_2, \ldots$ of elements of $A$ such that $a_{i+1} \preceq \alpha_{i+1} \prec \alpha_i \preceq a_i$ for all $i \in \mathbb N^+$, which is a contradiction because $A$ is a finite set.
\end{proof}

\begin{corollary}\label{cor:wlfgu weakly positive is factorable}
Every weakly l.f.g.u.~weakly positive monoid is factorable.
\end{corollary}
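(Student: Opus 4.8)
The plan is to reduce the statement to the finitely generated case by localizing at germs and then invoking Theorem~\ref{thm:fgu weakly positive}. Concretely, fix a $\preceq$-non-unit $x \in H$; since $x$ is arbitrary, it suffices to produce a single $\preceq$-factorization of $x$. I would carry out the argument inside the germ $\mathcal K := \llangle x \rrangle_{\mathcal H} = (K, \preceq_K)$ of $\mathcal H$ at $x$, where $K = \llangle x \rrangle_H$ is the submonoid of $H$ generated by the divisors of $x$ in $H$.

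The first step is to verify that $\mathcal K$ is an f.g.u.~weakly positive monoid, so that Theorem~\ref{thm:fgu weakly positive} is applicable. That $\mathcal K$ is f.g.u.~is precisely the content of the weakly l.f.g.u.~hypothesis on $\mathcal H$ (Definition~\ref{def:lfgu}\ref{def:lfgu(2)}). That $\mathcal K$ is weakly positive follows from Remark~\ref{rem:premonoids}\ref{rem:premonoids(4)}, since $\mathcal K$ is by construction a subpremonoid of the weakly positive monoid $\mathcal H$. I would also record that $x$ is a $\preceq_K$-non-unit: indeed $x \in K$ (as $x \mid_H x$) and $x \notin \mathcal H^\times$, whence $x \notin K \cap \mathcal H^\times = \mathcal K^\times$ by Remark~\ref{rem:preorders}\ref{rem:preorders(2)}.

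Applying Theorem~\ref{thm:fgu weakly positive} to $\mathcal K$ then shows that $\mathcal K$ is factorable, so the $\preceq_K$-non-unit $x$ admits a $\preceq_K$-factorization, i.e.~$\mathcal Z_{\mathcal K}(x) \ne \emptyset$. To conclude, I transfer this factorization back to $\mathcal H$: because $K$ is generated by the divisors of $x$ in $H$, it contains every one of them, so $\mathcal K$ satisfies the hypotheses of Proposition~\ref{prop:x-closedness}. Part~\ref{prop:x-closedness(ii)} of that proposition yields $\mathcal Z_{\mathcal H}(x) = \mathcal Z_{\mathcal K}(x) \ne \emptyset$, which furnishes the desired $\preceq$-factorization of $x$ and, by arbitrariness of $x$, proves that $\mathcal H$ is factorable.

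I do not expect a genuine obstacle, since the real work has already been packaged into Theorem~\ref{thm:fgu weakly positive}; the only care needed lies in the bookkeeping that legitimizes the localization. The two points to get right are that the \emph{germ} $\llangle x \rrangle_{\mathcal H}$ (rather than $\llb x \rrb_{\mathcal H}$) is the correct object — it is simultaneously f.g.u.~from the weakly l.f.g.u.~assumption and weakly positive by Remark~\ref{rem:premonoids}\ref{rem:premonoids(4)} — and that it contains every divisor of $x$, which is exactly what lets Proposition~\ref{prop:x-closedness}\ref{prop:x-closedness(ii)} pull a $\preceq_K$-factorization back to a $\preceq$-factorization.
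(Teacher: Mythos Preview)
Your proof is correct and follows essentially the same route as the paper's: localize at the germ $\llangle x \rrangle_{\mathcal H}$, note that it is f.g.u.~(by the weakly l.f.g.u.~hypothesis) and weakly positive (by Remark~\ref{rem:premonoids}\ref{rem:premonoids(4)}), apply Theorem~\ref{thm:fgu weakly positive} there, and transfer back via Proposition~\ref{prop:x-closedness}\ref{prop:x-closedness(ii)}. Your commentary on why the germ (rather than $\llb x \rrb_{\mathcal H}$) is the right object is apt and matches the paper's reasoning exactly.
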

\begin{proof}
Let $\mathcal H = (H, \preceq)$ be a weakly l.f.g.u.~weakly positive monoid and fix a $\preceq$-non-unit $x$; we have to check that $\mathcal{Z}_\mathcal{H}(x)$ is non-empty. Since $\mathcal H$ is a weakly l.f.g.u.~premonoid, the germ $\mathcal K := \llangle x\rrangle_\mathcal{H}$ is f.g.u. On the other hand, we gather from Remark \ref{rem:premonoids}\ref{rem:premonoids(4)} that $\mathcal K$ is a weakly positive monoid. Thus, Theorem \ref{thm:fgu weakly positive} implies that $\mathcal K$ is factorable. It follows that $\mathcal Z_{\mathcal K}(x)$ is non-empty, as we have from Remark \ref{rem:preorders}\ref{rem:preorders(2)} that $x$ is a $\preceq_{\mathcal K}$-non-unit; and by Proposition \ref{prop:x-closedness}\ref{prop:x-closedness(ii)}, this suffices to finish the proof (by definition, the monoid $\llangle x \rrangle_H$ contains all the divisors of $x$ in $H$).
\end{proof}

\begin{remarks}\label{rem:atoms-irreducibles}
\begin{enumerate*}[label=\textup{(\arabic{*})}, mode=unboxed]
    \item\label{rem:atoms-irreducibles(1)} Let $\mathcal{H} = (H,\preceq)$ be a weakly positive monoid and assume the existence of a finite set $A$ of $\preceq$-irreducibles such that $\mathscr I(\mathcal{H}) \subseteq \mathcal{H}^\times A \, \mathcal{H}^\times$; we aim to show that $\mathcal{H}$ is oft. For, let $\mathfrak b = b_1 \ast \cdots \ast b_n$ be a non-empty $\mathscr I(\mathcal{H})$-word of length $n$. Then, for every $i \in \llb 1, n \rrb$, $b_i = u_i a_i v_i$ for some $u_i, v_i \in \mathcal{H}^\times$ and $a_i \in A$; and since $\mathcal{H}$ is weakly positive, $b_i \preceq a_i \preceq b_i$ for each $i \in \llb 1, n \rrb$. Hence $\mathfrak b$ is $\sqeq_\mathcal{H}$-equivalent to the $A$-word $a_1 \ast \dots \ast a_n$, and this suffices to conclude.
\end{enumerate*}

\vskip 0.05cm
\begin{enumerate*}[label=\textup{(\arabic{*})}, mode=unboxed, resume]
    \item\label{rem:atoms-irreducibles(2)} Let $\mathcal H = (H, \preceq)$ be an f.g.u.~weakly positive monoid. By Theorem \ref{thm:fgu weakly positive}, there is a finite $A \subseteq \mathscr I(\mathcal H)$ with the property that $H = \langle \mathcal H^\times A \, \mathcal H^\times \rangle_H$, and we claim $\mathscr{A}(\mathcal{H}) \subseteq \allowbreak \mathcal{H}^\times A \,\mathcal{H}^\times$: By item \ref{rem:atoms-irreducibles(1)}, this will imply that, if $\mathscr{I}(\mathcal{H}) \subseteq \allowbreak \mathscr{A}(\mathcal{H})$, then $\mathcal H$ is oft. For the claim, let $a \in H$ be a $\preceq$-atom. Then $a \ne 1_H$ and hence $a = \allowbreak u_1 a_1 v_1 \cdots \allowbreak u_n a_n v_n$ for some $u_1, v_1, \ldots, u_n, v_n \in \mathcal{H}^\times$ and $a_1, \dots, a_n \in A$ (with $n \in \mathbb N^+$). However, if $n \ge 2$ then we get from Remark \ref{rem:premonoids}\ref{rem:premonoids(3)} that $a$ factors as a product of two $\preceq$-non-units, contradicting that $a$ is a $\preceq$-atom. So, $a = u_1 a_1 v_1 \in \mathcal H^\times A \, \mathcal H^\times$ (as wished).
\end{enumerate*}

\vskip 0.05cm

\begin{enumerate*}[label=\textup{(\arabic{*})}, mode=unboxed, resume]
    \item\label{rem:atoms-irreducibles(3)} As a complement to item \ref{rem:atoms-irreducibles(2)}, we note that not for every l.f.g.u.~positive monoid $\mathcal{H} = (H,\preceq)$ there exists a finite $A \subseteq \mathscr I(\mathcal{H})$ such that $\mathscr I(\mathcal{H})\subseteq \mathcal{H}^\times A \, \mathcal{H}^\times$ (recall from Remark \ref{rem:premonoids}\ref{rem:premonoids(5)} that positive implies weakly positive). E.g., let $H := (\mathbb N, +)$ be the additive monoid of the non-negative integers and, for all $x, y \in \mathbb N$, define $x \preceq y$ if and only if $x = 0$ or $x, y \in \mathbb N^+$. The only $\preceq$-unit is then the identity $0$, the only $\preceq$-atom is $1$, and every non-zero element of $H$ is a $\preceq$-quark and hence a $\preceq$-irreducible. Moreover, the premonoid $\mathcal H = (H, \preceq)$ is obviously positive and f.g.~(in fact, $H$ is generated by $1$). Yet, every positive integer is a $\preceq$-irreducible, with the result that $\mathscr I(\mathcal H)$ is not contained in $\mathcal H^\times A\, \mathcal H^\times = A$ for any \emph{finite} $A \subseteq \allowbreak H$. (Incidentally, $\mathcal H$ is not a strongly positive monoid, because $0 \prec 1$ but $0+1 \preceq 1+1 \preceq 0+1$.)
\end{enumerate*}
\end{remarks}

Given a set $X$, we say that an $X$-word $\mathfrak u$ is a \evid{scattered subword} of an $X$-word $\mathfrak v$  if there is a (strictly) increasing function $\sigma \colon \llb 1, \| \mathfrak u \|_X \rrb \to \llb 1, \| \mathfrak v \|_X \rrb$ such that $\mathfrak u[i] = \mathfrak v[\sigma(i)]$ for each $i \in \llb 1, \| \mathfrak u \|_X \rrb$ (cf.~Example \ref{exa:lfgu-premonoids}\ref{exa:lfgu-premonoids(3)}).
Our interest in scattered subwords lies in the following result, which is commonly referred to as \emph{Higman's lemma} and will be a main ingredient in the proof of Theorem \ref{thm:loft-BmF-is-FmF}\ref{thm:loft-BmF-is-FmF(ii)} and Corollary \ref{cor:weakly-lfgu-strongly-pos-is-FF-atom} (the result will also enter the proof of Theorem \ref{thm:lfgu-left-duo-satisfies-ACCP}, though in a more general form).

\begin{theorem}[Higman's lemma]\label{thm:higman}
	If $X$ is a finite set, then every infinite sequence of $X$-words contains an infinite sub\-se\-quence each of whose terms is a scattered subword of the next.
\end{theorem}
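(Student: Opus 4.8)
The plan is to recognize the statement as the assertion that the \emph{scattered-subword} relation is a well-quasi-order on $\mathscr F(X)$ and to prove this via the classical \emph{minimal bad sequence} technique of Nash-Williams, afterwards upgrading ``no bad sequence'' to ``an infinite increasing subsequence exists'' through an appeal to the infinite Ramsey theorem. To fix terminology, write $\mathfrak u \hookrightarrow \mathfrak v$ to mean that $\mathfrak u$ is a scattered subword of $\mathfrak v$; note that $\hookrightarrow$ is a preorder on $\mathscr F(X)$ (the composition of two strictly increasing functions is strictly increasing), that the empty word embeds into every word, and that $\mathfrak u \hookrightarrow \mathfrak v$ forces $\|\mathfrak u\|_X \le \|\mathfrak v\|_X$. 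Call an infinite sequence $(\mathfrak w_n)_{n \ge 1}$ of $X$-words \emph{good} if $\mathfrak w_i \hookrightarrow \mathfrak w_j$ for some $i < j$, and \emph{bad} otherwise. The crux of the argument is to show that \textbf{there is no bad sequence of $X$-words}.

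Suppose one exists. Using the well-ordering of lengths (and dependent choice), I would build a \emph{minimal} bad sequence $(\mathfrak w_n)_{n \ge 1}$ recursively: having chosen $\mathfrak w_1, \dots, \mathfrak w_{n-1}$, take $\mathfrak w_n$ to be a word of least possible length $\|\mathfrak w_n\|_X$ for which $\mathfrak w_1, \dots, \mathfrak w_{n-1}, \mathfrak w_n$ still extends to \emph{some} bad sequence. No $\mathfrak w_n$ can be the empty word (an empty term embeds into every later term, making the sequence good), so each $\mathfrak w_n$ factors as $\mathfrak w_n = x_n \ast \mathfrak w_n'$ with first letter $x_n \in X$ and tail $\mathfrak w_n'$ of length $\|\mathfrak w_n\|_X - 1$. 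Because $X$ is \emph{finite}, the pigeonhole principle yields an infinite index set $N = \{n_1 < n_2 < \cdots\}$ on which $x_n$ is constant, say equal to $x$.

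Now I would consider the spliced sequence $\mathfrak w_1, \dots, \mathfrak w_{n_1 - 1}, \mathfrak w_{n_1}', \mathfrak w_{n_2}', \dots$ obtained by keeping the first $n_1 - 1$ terms and then listing the tails indexed by $N$. Since $\mathfrak w_{n_1}'$ is strictly shorter than $\mathfrak w_{n_1}$, minimality forces this spliced sequence to be good; I would then show the witnessing good pair must lie among the tails. It cannot lie inside $\mathfrak w_1, \dots, \mathfrak w_{n_1-1}$ (that would make the original sequence good), and it cannot be of the form $\mathfrak w_i \hookrightarrow \mathfrak w_{n_j}'$ with $i < n_1$ (else $\mathfrak w_i \hookrightarrow \mathfrak w_{n_j}' \hookrightarrow \mathfrak w_{n_j}$, again contradicting badness). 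Hence $\mathfrak w_{n_i}' \hookrightarrow \mathfrak w_{n_j}'$ for some $i < j$; prepending the common first letter $x$ gives $\mathfrak w_{n_i} = x \ast \mathfrak w_{n_i}' \hookrightarrow x \ast \mathfrak w_{n_j}' = \mathfrak w_{n_j}$, contradicting the badness of $(\mathfrak w_n)$ and establishing the claim.

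Finally, to pass from the claim to the stated conclusion, given any infinite sequence of $X$-words I would $2$-colour each pair $\{i < j\}$ according to whether $\mathfrak w_i \hookrightarrow \mathfrak w_j$ or not; the infinite Ramsey theorem produces an infinite homogeneous subset, and its monochromatic colour cannot be ``$\not\hookrightarrow$'' (that would give a bad subsequence), so it is ``$\hookrightarrow$'', i.e.\ an infinite subsequence each of whose terms is a scattered subword of the next. The main obstacle is the minimal bad sequence step: one must set up the recursive choice so that ``least length compatible with remaining bad-extendability'' is well defined, and then argue correctly both that inserting the shorter tail $\mathfrak w_{n_1}'$ destroys badness and that the recovered good pair lifts back through the shared first letter $x$. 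Everything else—the preorder properties of $\hookrightarrow$, the pigeonhole extraction of a constant first letter, and the concluding Ramsey step—is routine.
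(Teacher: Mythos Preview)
Your proof is correct. The minimal bad sequence construction is set up properly (least length compatible with bad-extendability is well defined because lengths are non-negative integers and at each stage there \emph{is} some extendable choice, namely the one coming from the assumed bad sequence), the spliced-sequence argument correctly exploits minimality at position $n_1$, and the case analysis on the location of the good pair is complete; the final Ramsey step is the standard upgrade from ``no bad sequence'' to ``every sequence has an infinite ascending subsequence''.

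The paper, by contrast, does not prove the lemma at all: its proof is a one-line citation of Higman's original 1952 paper \cite{Hi52}, observing that the result follows by specializing Higman's Theorems~2.1 and~4.3 to the finite discrete poset $(X,=_X)$. So your approach is genuinely different in that it is self-contained. What you gain is independence from the literature and an argument (Nash-Williams' minimal bad sequence) that is arguably more transparent than Higman's original inductive proof; what the paper gains is brevity and, by citing Higman's general well-quasi-order machinery, it also sets up the ``Higman's full lemma'' invoked later before Theorem~\ref{thm:lfgu-left-duo-satisfies-ACCP}, where the underlying alphabet is itself only quasi-ordered rather than finite-discrete. Your argument as written handles only the finite-alphabet case, though the same minimal bad sequence template extends to that more general setting with essentially no extra work.
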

\begin{proof}
This is an immediate corollary of \cite[Theorems 2.1 and 4.3]{Hi52} applied to the finite poset $(X, =_X)$, where $=_X$ is the discrete order on $X$ (meaning that $x =_X y$ if and only if $x = y \in X$).
\end{proof}

As already mentioned in the introduction, Higman's lemma is a non-commutative generalization of Dickson's lemma \cite{Di13}. But while Dickson's lemma has been long known to play a key role in the study of the arithmetic of integral domains and ``nearly cancellative'' commutative monoids (see \cite[Theorem 2.9.13]{GeHK06}, \cite[Proposition 7.3]{Ge16c}, and \cite[Proposition 3.4]{FGKT} for some representative results in this direction), Higman's has never found application in factorization theory until now.

\begin{theorem}\label{thm:loft-BmF-is-FmF}
The following hold for a loft premonoid $\mathcal H = (H, \preceq)$:
	\begin{enumerate}[label=\textup{(\roman{*})}]
	\item\label{thm:loft-BmF-is-FmF(i)} $\mathcal{H}$ is \textup{BF}-factorable (resp., BmF-factorable) if and only if it is \textup{FF}-factorable (resp., \textup{FmF}-factorable).
	\item\label{thm:loft-BmF-is-FmF(ii)} Up to $\sqeq_\mathcal{H}$-equivalence, every $\preceq$-non-unit has finitely many minimal $\preceq$-factorizations.
	\end{enumerate}
\end{theorem}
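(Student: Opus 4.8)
The plan is to prove part \ref{thm:loft-BmF-is-FmF(ii)} first, as it carries the essential content; the minimal half of part \ref{thm:loft-BmF-is-FmF(i)} will then follow almost for free. For part \ref{thm:loft-BmF-is-FmF(ii)}, fix a $\preceq$-non-unit $x$ and argue by contradiction, assuming it admits infinitely many pairwise $\sqeq_\mathcal H$-inequivalent minimal $\preceq$-factorizations $\mathfrak a_1, \mathfrak a_2, \ldots \in \mathcal Z^{\sf m}_\mathcal H(x)$. By loftness there is a \emph{single} finite set $A_x \subseteq \mathscr I(\mathcal H)$ such that each $\mathfrak a_n$ is $\sqeq_\mathcal H$-equivalent to some $A_x$-word $\mathfrak b_n$. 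Applying Higman's lemma (Theorem \ref{thm:higman}) to the infinite sequence $(\mathfrak b_n)$ over the finite alphabet $A_x$ produces indices $n_1 < n_2$ with $\mathfrak b_{n_1}$ a scattered subword of $\mathfrak b_{n_2}$.

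The key observation is that a scattered subword $\mathfrak u$ of a word $\mathfrak v$ always satisfies $\mathfrak u \sqeq_\mathcal H \mathfrak v$: the strictly increasing witness $\sigma$ with $\mathfrak u[i] = \mathfrak v[\sigma(i)]$ is injective and, by reflexivity of $\preceq$, gives $\mathfrak u[i] \preceq \mathfrak v[\sigma(i)] \preceq \mathfrak u[i]$. Chaining, $\mathfrak a_{n_1} \sqeq_\mathcal H \mathfrak b_{n_1} \sqeq_\mathcal H \mathfrak b_{n_2} \sqeq_\mathcal H \mathfrak a_{n_2}$, whence $\mathfrak a_{n_1} \sqeq_\mathcal H \mathfrak a_{n_2}$. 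Since $\mathfrak a_{n_1} \in \mathcal Z_\mathcal H(x)$ while $\mathfrak a_{n_2}$ is $\sqeq_\mathcal H$-minimal in $\mathcal Z_\mathcal H(x)$, this relation cannot be strict; hence $\mathfrak a_{n_1}$ and $\mathfrak a_{n_2}$ are $\sqeq_\mathcal H$-equivalent, contradicting the choice of the sequence. This proves \ref{thm:loft-BmF-is-FmF(ii)}.

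For part \ref{thm:loft-BmF-is-FmF(i)}, the implications FF $\Rightarrow$ BF and FmF $\Rightarrow$ BmF hold for every premonoid (Remark \ref{rem:diagram}), so only the converses need loftness. For BF $\Rightarrow$ FF, BF-factorability bounds the lengths of the $\preceq$-factorizations of a fixed $\preceq$-non-unit $x$ by some $N$ and guarantees at least one; loftness attaches to each factorization $\mathfrak a$ a $\sqeq_\mathcal H$-equivalent $A_x$-word $\mathfrak b$, and since $\sqeq_\mathcal H$-equivalence preserves length we have $\|\mathfrak b\|_H = \|\mathfrak a\|_H \le N$. As there are only finitely many $A_x$-words of length $\le N$, assigning to each $\sqeq_\mathcal H$-class of $\mathcal Z_\mathcal H(x)$ a representative $A_x$-word of length $\le N$ yields an injection into a finite set, so that quotient is finite and non-empty, i.e.\ $\mathcal H$ is FF-factorable.

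For the minimal versions, part \ref{thm:loft-BmF-is-FmF(ii)} already says that, in a loft premonoid, the set of $\sqeq_\mathcal H$-classes of $\mathcal Z^{\sf m}_\mathcal H(x)$ is finite for every $\preceq$-non-unit $x$; and as $\sqeq_\mathcal H$-equivalent words share a common length (Remark \ref{rem:diagram}\ref{rem:diagram(2)}), $\mathsf L^{\sf m}_\mathcal H(x)$ is finite as well. Thus both BmF- and FmF-factorability reduce, under loftness, to the single demand that every $\preceq$-non-unit have at least one minimal $\preceq$-factorization, that is, to factorability (Remark \ref{rem:diagram}\ref{rem:diagram(1)}), so that BmF-factorable $\Leftrightarrow$ factorable $\Leftrightarrow$ FmF-factorable, completing \ref{thm:loft-BmF-is-FmF(i)}. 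The crux of the whole argument is part \ref{thm:loft-BmF-is-FmF(ii)}: because minimal factorizations carry no a priori length bound (contrast Example \ref{exa:properties-of-factorizations}\ref{exa:properties-of-factorizations(1)}), plain counting fails and one genuinely needs a well-quasi-ordering input; Higman's lemma supplies it, once one recognizes that the scattered-subword relation refines $\sqeq_\mathcal H$ and that loftness transports everything onto a fixed finite alphabet.
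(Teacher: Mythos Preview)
Your proof is correct and follows essentially the same approach as the paper: loftness to transport factorizations onto a fixed finite alphabet, Higman's lemma for part \ref{thm:loft-BmF-is-FmF(ii)}, and a pigeonhole count over $A_x$-words of bounded length for BF $\Rightarrow$ FF. The only difference is organizational: you prove \ref{thm:loft-BmF-is-FmF(ii)} first and then deduce the minimal half of \ref{thm:loft-BmF-is-FmF(i)} as a corollary (under loftness, BmF $\Leftrightarrow$ factorable $\Leftrightarrow$ FmF), whereas the paper handles BmF $\Rightarrow$ FmF by rerunning the BF $\Rightarrow$ FF argument on minimal factorizations; your route is marginally cleaner but the content is the same.
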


\begin{proof}
    Fix a $\preceq$-non-unit $x \in H$. The loftness of $\mathcal H$ implies the existence of a finite set $A_x \subseteq \mathscr I(\mathcal H)$ such that every word in the set $\mathcal Z_\mathcal{H}(x)$ of $\preceq$-factorizations of $x$ is $\sqeq_\mathcal{H}$-equivalent to an $A_x$-word.
    
    \vskip 0.05cm
    
	\ref{thm:loft-BmF-is-FmF(i)} We have already observed in Remark \ref{rem:diagram}\ref{rem:diagram(2)} that an FF-factorable premonoid is BF-factorable. So, let $\mathcal H$ be a BF-factorable premonoid. The set of lengths $\mathsf L_\mathcal{H}(x)$ of $x$ is then finite and non-empty. Assume for a contradiction that $\mathcal Z_\mathcal{H}(x)$ contains infinitely many $\sqeq_\mathcal{H}$-inequivalent elements (note that $\mathcal Z_\mathcal{H}(x)$ is non-empty, because $\mathcal H$ is BF-factorable). The same is then true for the set
    \begin{equation}
    \Lambda(x) := \{\bar{\mathfrak a} \in \mathscr F(A_x) \colon \bar{\mathfrak a} \sqeq_\mathcal{H} \mathfrak a \sqeq_\mathcal{H} \bar{\mathfrak a}, \text{ for some }\mathfrak a \in \mathcal Z_\mathcal{H} (x)\}
    \end{equation}
    Since $\Lambda(x)$ is a subset of $\mathscr F(A_x)$ and $A_x$ is finite, it follows that there is a sequence $\bar{\mathfrak a}_1, \bar{\mathfrak a}_2, \ldots$ of non-empty $A_x$-words in $\Lambda(x)$ with $\|\bar{\mathfrak{a}}_i\|_H < \|\bar{\mathfrak{a}}_{i+1}\|_H$ for all $i \in \mathbb N^+$. But this implies that the set $\mathsf L_\mathcal{H}(x)$ is infinite (absurd), for each $A_x$-word $\bar{\mathfrak a} \in \Lambda(x)$ is $\sqeq_\mathcal{H}$-equivalent to a $\preceq$-factorization $\mathfrak a$ of $x$ and hence $\|\bar{\mathfrak a}\|_H = \|\mathfrak a\|_H$. 
    
    An analogous argument applied to minimal $\preceq$-factorizations of $x$ shows that $\mathcal{H}$ is BmF-factorable only if it is FmF-factorable, and the converse follows from Remark \ref{rem:diagram}\ref{rem:diagram(2)}.
	
	\vskip 0.1cm
	\ref{thm:loft-BmF-is-FmF(ii)} Suppose to the contrary that there is a  $\preceq$-non-unit $x$ such that $\mathcal Z_\mathcal{H}^{\sf m}(x)$ contains infinitely many $\sqeq_\mathcal{H}$-inequivalent elements. Then, also the set
    \begin{equation}\label{equ:Lambda}
    \Lambda^{\rm m}(x) := \{\bar{\mathfrak a} \in \mathscr F(A_x) \colon \bar{\mathfrak a} \sqeq_\mathcal{H} \mathfrak a \sqeq_\mathcal{H} \bar{\mathfrak a}, \text{ for some }\mathfrak a \in \mathcal Z^{\rm m}_\mathcal{H} (x)\}
    \end{equation}
    is infinite. So, arguing as in the proof of item \ref{thm:loft-BmF-is-FmF(i)}, we get a sequence $\bar{\mathfrak a}_1, \bar{\mathfrak a}_2, \ldots$ of non-empty $A_x$-words in $\Lambda^{\rm m}(x)$ with $\|\bar{\mathfrak{a}}_i\|_H < \|\bar{\mathfrak{a}}_{i+1}\|_H$ for every $i \in \mathbb N^+$; and by Higman's lemma, there is no loss of generality in assuming that $\bar{\mathfrak{a}}_i$ is a scat\-tered sub\-word of $\bar{\mathfrak{a}}_{i+1}$. But then, by the very definition of the pre\-order $\sqeq_\mathcal{H}$, we get $\bar{\mathfrak a}_1 \sqneq_\mathcal{H} \bar{\mathfrak a}_2$ and conclude from Eq.~\eqref{equ:Lambda} that $\mathfrak a_1 \sqeq_\mathcal{H} \bar{\mathfrak a}_1 \sqneq_\mathcal{H} \bar{\mathfrak a}_2 \sqeq_\mathcal{H} \mathfrak a_2$ for some $\mathfrak a_1, \mathfrak a_2 \in \mathcal Z^{\rm m}_\mathcal{H} (x)$, which is impossible since a minimal $\preceq$-factorization of $x$ is a $\sqeq_\mathcal{H}$-minimal word in the set $\pi_H^{-1}(x) \cap \mathscr F(\mathscr{I}(\mathcal{H}))$.
\end{proof}

We note in passing that the use of Higman's lemma in the proof of Theorem \ref{thm:loft-BmF-is-FmF}\ref{thm:loft-BmF-is-FmF(ii)} is not really nec\-es\-sar\-y: Dickson's lemma would work just fine, at the cost of making the proof slightly longer.

\begin{theorem}\label{thm: sufficient for loftness}
	Every weakly l.f.g.u.~weakly positive monoid $\mathcal H = (H, \preceq)$ such that the $\preceq$-irreducibles are $\preceq$-atoms, is loft and hence FmF-atomic.
\end{theorem}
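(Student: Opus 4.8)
The plan is to prove loftness first and then read off FmF-atomicity almost for free. Throughout, fix a $\preceq$-non-unit $x$ and let $\mathcal K = (K, \preceq_K) := \llangle x \rrangle_{\mathcal H}$ be the germ of $\mathcal H$ at $x$, so that $K = \llangle x \rrangle_H$ contains every divisor of $x$ in $H$. Since $\mathcal H$ is weakly l.f.g.u., the germ $\mathcal K$ is f.g.u.; and since $\mathcal H$ is weakly positive, Remark \ref{rem:premonoids}\ref{rem:premonoids(4)} guarantees that $\mathcal K$ is weakly positive as well. The crucial extra input is the hypothesis $\mathscr I(\mathcal H) = \mathscr A(\mathcal H)$ (recall that $\preceq$-atoms are always $\preceq$-irreducibles, cf.\ Example \ref{exa:irrds-atoms-quarks}\ref{exa:irrds-atoms-quarks(1)}), which, combined with Proposition \ref{prop:x-closedness}\ref{prop:x-closedness(i)}, yields $\mathscr I_x(\mathcal K) = \mathscr I_x(\mathcal H) = \mathscr A_x(\mathcal H) = \mathscr A_x(\mathcal K)$; in words, \emph{every $\preceq_K$-irreducible divisor of $x$ is a $\preceq_K$-atom of $\mathcal K$}.

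For loftness I would apply Theorem \ref{thm:fgu weakly positive} to $\mathcal K$, obtaining a finite set $A \subseteq \mathscr I(\mathcal K)$ such that every $\preceq_K$-non-unit is a product of elements of $I_A := \mathscr I(\mathcal K) \cap \mathcal K^\times A\, \mathcal K^\times$. Now take any $a \in \mathscr I_x(\mathcal K)$ and write $a = c_1 \cdots c_m$ with $c_1, \ldots, c_m \in I_A$ and $m \ge 1$. Because the $\preceq_K$-non-units form a two-sided ideal of $K$ (Remark \ref{rem:premonoids}\ref{rem:premonoids(3)}, applicable since $\mathcal K$ is weakly positive), if $m \ge 2$ then $a$ would split as the product of the two $\preceq_K$-non-units $c_1$ and $c_2 \cdots c_m$, contradicting that $a$ is a $\preceq_K$-atom. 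Hence $m = 1$ and $a \in I_A \subseteq \mathcal K^\times A\,\mathcal K^\times$; writing $a = u\alpha v$ with $u, v \in \mathcal K^\times$ and $\alpha \in A$, weak positivity of $\mathcal K$ gives $a = u\alpha v \preceq_K \alpha$ and, since $a \in K\alpha K$, also $\alpha \preceq_K a$, so $a$ is $\preceq_K$-equivalent to $\alpha$. Consequently $\mathscr I_x(\mathcal K)$ meets at most $|A|$ many $\preceq_K$-equivalence classes, and I may choose a finite set $A_x$ of representatives \emph{inside} $\mathscr I_x(\mathcal K)$; since $\mathscr I_x(\mathcal K) = \mathscr I_x(\mathcal H) \subseteq \mathscr I(\mathcal H)$, this $A_x$ is a legitimate finite subset of $\mathscr I(\mathcal H)$.

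It then remains to verify the defining property of loftness for $A_x$. Any $\preceq$-factorization $\mathfrak a = a_1 \ast \cdots \ast a_k$ of $x$ has all of its letters in $\mathscr I_x(\mathcal H) = \mathscr I_x(\mathcal K)$ (each letter divides $x$), so by construction each $a_i$ is $\preceq_K$-equivalent to some $\alpha_i \in A_x$; matching letters through the identity injection $i \mapsto i$ shows that $\mathfrak a$ is $\sqeq_{\mathcal K}$-equivalent to the $A_x$-word $\alpha_1 \ast \cdots \ast \alpha_k$. Since $\sqeq_{\mathcal K}$ is exactly the restriction of $\sqeq_{\mathcal H}$ to $\mathscr F(K)$ (Proposition \ref{prop:x-closedness}\ref{prop:x-closedness(iii)}) and both words lie in $\mathscr F(K)$, it follows that $\mathfrak a$ is $\sqeq_{\mathcal H}$-equivalent to an $A_x$-word. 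As $x$ was an arbitrary $\preceq$-non-unit, $\mathcal H$ is loft.

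Finally, FmF-atomicity would be deduced as follows: by Corollary \ref{cor:wlfgu weakly positive is factorable} the monoid $\mathcal H$ is factorable, hence (Remark \ref{rem:diagram}\ref{rem:diagram(1)}) every $\preceq$-non-unit admits a minimal $\preceq$-factorization, so $\mathcal Z_{\mathcal H}^{\sf m}(x)$ is non-empty; and because $\mathscr I(\mathcal H) = \mathscr A(\mathcal H)$ forces $\mathcal Z_{\mathcal H}^{\sf m}(x\,;\mathscr A(\mathcal H)) = \mathcal Z_{\mathcal H}^{\sf m}(x)$, these minimal factorizations are automatically atomic. Loftness together with Theorem \ref{thm:loft-BmF-is-FmF}\ref{thm:loft-BmF-is-FmF(ii)} then bounds the number of minimal $\preceq$-factorizations of $x$ up to $\sqeq_{\mathcal H}$-equivalence, giving finiteness of the relevant quotient; combined with non-emptiness, this is precisely FmF-atomicity. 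I expect the main obstacle to be the point glossed over by a naive ``$\mathcal K$ is oft'' argument: a $\preceq_K$-irreducible of the germ need not be $\preceq$-irreducible in $H$ (the germ is \emph{generated by}, but does not consist of, the divisors of $x$), so one cannot simply invoke Remark \ref{rem:atoms-irreducibles}\ref{rem:atoms-irreducibles(2)} globally on $\mathcal K$. The remedy is to stay localized at the divisors of $x$, where Proposition \ref{prop:x-closedness}\ref{prop:x-closedness(i)} does let the hypothesis $\mathscr I(\mathcal H) = \mathscr A(\mathcal H)$ descend to $\mathcal K$, and to be careful to draw the representatives $A_x$ from $\mathscr I_x(\mathcal K)$ rather than from $A$, so that they land in $\mathscr I(\mathcal H)$.
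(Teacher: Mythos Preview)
Your proof is correct and follows essentially the same route as the paper's: pass to the germ $\mathcal K$, apply Theorem~\ref{thm:fgu weakly positive} there, use $\mathscr I_x(\mathcal H)=\mathscr A_x(\mathcal H)=\mathscr A_x(\mathcal K)$ via Proposition~\ref{prop:x-closedness}\ref{prop:x-closedness(i)} to conclude that each letter of a $\preceq$-factorization of $x$ is a $\preceq_K$-atom and hence lies in $\mathcal K^\times A\,\mathcal K^\times$, and finish with weak positivity. The only cosmetic difference is in how $A_x$ is chosen: the paper takes $A_x:=A\cap\mathscr I_x(\mathcal H)$ and shows directly that the $b_i$'s from $a_i=u_ib_iv_i$ land there (since $b_i\mid_K a_i\mid_K x$), whereas you pick representatives of the finitely many $\preceq_K$-classes meeting $\mathscr I_x(\mathcal K)$; both yield a finite subset of $\mathscr I(\mathcal H)$ doing the job. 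One small remark on your closing paragraph: the paper \emph{does} invoke Remark~\ref{rem:atoms-irreducibles}\ref{rem:atoms-irreducibles(2)} on $\mathcal K$ (for the inclusion $\mathscr A(\mathcal K)\subseteq\mathcal K^\times A\,\mathcal K^\times$), so the obstacle you flag is not with that remark per se but, exactly as you say, with ensuring the resulting letters lie in $\mathscr I(\mathcal H)$ rather than merely in $\mathscr I(\mathcal K)$.
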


\begin{proof}
By Corollary \ref{cor:wlfgu weakly positive is factorable} and Theorem \ref{thm:loft-BmF-is-FmF}\ref{thm:loft-BmF-is-FmF(ii)}, it suffices to show that $\mathcal{H}$ is loft. Let $x$ be a $\preceq$-non-unit and consider the germ $\llangle x\rrangle_\mathcal{H}$ of $\mathcal H$ at $x$. To ease the notation, we put $K:= \llangle x\rrangle_H$ and $\mathcal{K}:=\llangle x\rrangle_\mathcal{H}$. Since $\mathcal{H}$ is a weakly l.f.g.u.~premonoid, $\mathcal{K}$ is f.g.u.; and since $\mathcal H$ is weakly positive, we get from Remark \ref{rem:premonoids}\ref{rem:premonoids(4)} that $\mathcal K$ is itself weakly positive. It then follows from Theorem \ref{thm:fgu weakly positive} that there exists a finite $A \subseteq \mathscr{I}(\mathcal{K})$ such that $K\setminus \mathcal K^\times \subseteq \allowbreak \langle \mathcal{K}^\times A\, \mathcal{K}^\times\rangle_K=\langle \mathcal{K}^\times A\, \mathcal{K}^\times\rangle_H$. We claim that every $\preceq$-factorization of $x$ is $\sqeq_\mathcal{H}$-equivalent to an $A_x$-word, where $A_x := A\cap \mathscr{I}_x(\mathcal{H})$ is a finite subset of $\mathscr I(\mathcal H)$. 

For, let $\mathfrak a = a_1\ast \dots \ast a_n$ be a $\preceq$-factorization of $x$ (note that $\mathfrak a$ cannot be the empty word). Since $x = \allowbreak a_1\cdots a_n$ and, by hypothesis, $\mathscr{I}(\mathcal{H}) = \mathscr{A}(\mathcal{H})$, we have from Proposition \ref{prop:x-closedness}\ref{prop:x-closedness(i)} that, for every $i \in \llb 1,n\rrb$, $a_i \in \mathscr{I}_x (\mathcal{H}) = \mathscr{A}_x(\mathcal{H}) = \allowbreak \mathscr{A}_x(\mathcal{K}) \subseteq \mathscr{A}(\mathcal{K})$. Thus, since $\mathscr{A}(\mathcal{K})\subseteq \mathcal{K}^\times A \,\mathcal{K}^\times$ by Remark \ref{rem:atoms-irreducibles}\ref{rem:atoms-irreducibles(2)},  $a_i=u_ib_iv_i$ for some $u_i, v_i \in \allowbreak \mathcal{K}^\times$ and $b_i\in A$, which in turn shows that $b_i \in \allowbreak A \cap \mathscr{I}_x(\mathcal{K}) = \allowbreak A \cap \mathscr{I}_x(\mathcal{H})$. So, arguing as in Remark \ref{rem:atoms-irreducibles}\ref{rem:atoms-irreducibles(1)}, we conclude (as wished) that $a_1 \ast \dots \ast a_n$ is $\sqeq_\mathcal{H}$-equivalent to $b_1 \ast \allowbreak \dots \ast \allowbreak b_n$ (recall from Remark \ref{rem:preorders}\ref{rem:preorders(2)} that $\mathcal{K}^\times \subseteq\mathcal{H}^\times$).
\end{proof}

\begin{corollary}
\label{cor:weakly-lfgu-strongly-pos-is-FF-atom}
Every weakly l.f.g.u.~strongly positive monoid is FF-atomic.
\end{corollary}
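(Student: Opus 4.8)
The plan is to reduce the statement to a boundedness assertion and then exploit strong positivity together with Higman's lemma. First I would record two consequences of strong positivity: by Remark \ref{rem:premonoids}\ref{rem:premonoids(1)} and \ref{rem:premonoids(5)}, a strongly positive monoid is positive and hence weakly positive, so that Corollary \ref{cor:wlfgu weakly positive is factorable} applies and $\mathcal{H}$ is factorable; and by Remark \ref{rem:premonoids}\ref{rem:premonoids(2)} every $\preceq$-irreducible is a $\preceq$-atom, i.e. $\mathscr{I}(\mathcal{H}) = \mathscr{A}(\mathcal{H})$, so that the ``atomic'' and the plain ``factorization'' notions coincide and it is the same to prove that $\mathcal{H}$ is FF-atomic or FF-factorable. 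Since the $\preceq$-irreducibles are $\preceq$-atoms, Theorem \ref{thm: sufficient for loftness} shows that $\mathcal{H}$ is loft, and then Theorem \ref{thm:loft-BmF-is-FmF}\ref{thm:loft-BmF-is-FmF(i)} tells me that $\mathcal{H}$ is FF-factorable as soon as it is BF-factorable. As factorability already gives $\mathsf{L}_\mathcal{H}(x) \ne \emptyset$, everything comes down to showing that, for each $\preceq$-non-unit $x$, the set of lengths $\mathsf{L}_\mathcal{H}(x)$ is \emph{finite}, i.e. the lengths of the $\preceq$-factorizations of $x$ are bounded.

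For the boundedness I would pass to the germ $\mathcal{K} := \llangle x \rrangle_\mathcal{H}$, which by Proposition \ref{prop:x-closedness}\ref{prop:x-closedness(ii)} has the same $\preceq$-factorizations of $x$ as $\mathcal{H}$. The germ is f.g.u.~(by weak l.f.g.u.-ness), weakly positive and even strongly positive (Remark \ref{rem:premonoids}\ref{rem:premonoids(4)}), so Theorem \ref{thm:fgu weakly positive} together with Remark \ref{rem:atoms-irreducibles}\ref{rem:atoms-irreducibles(2)} supplies a \emph{finite} set $A \subseteq \mathscr{A}(\mathcal{K})$ with $\mathscr{A}(\mathcal{K}) = \mathscr{I}(\mathcal{K}) \subseteq \mathcal{K}^\times A\,\mathcal{K}^\times$. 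Arguing by contradiction, suppose $x$ admits $\preceq$-factorizations $\mathfrak a_1, \mathfrak a_2, \dots$ of strictly increasing length. Writing $\mathfrak a_j = \alpha_{j,1} \ast \cdots \ast \alpha_{j,n_j}$ and each atom as $\alpha_{j,l} = u_{j,l} b_{j,l} v_{j,l}$ with $u_{j,l}, v_{j,l} \in \mathcal{K}^\times$ and $b_{j,l} \in A$, I form the $A$-word $\mathfrak w_j := b_{j,1} \ast \cdots \ast b_{j,n_j}$. The crucial point is that this replacement is performed \emph{position by position}: since $\alpha_{j,l}$ is $\preceq$-equivalent to $b_{j,l}$ by weak positivity (exactly as in Remark \ref{rem:atoms-irreducibles}\ref{rem:atoms-irreducibles(1)}), multiplying these equivalences in order via Remark \ref{rem:premonoids}\ref{rem:premonoids(2)} gives that $\pi_K(\mathfrak w_j)$ is $\preceq$-equivalent to $\pi_K(\mathfrak a_j) = x$ for every $j$. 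This is what lets me sidestep the fact that $\sqeq_\mathcal{H}$-equivalence need not respect products in a non-commutative monoid.

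Finally I would feed the sequence $\mathfrak w_1, \mathfrak w_2, \dots$ of $A$-words into Higman's lemma (Theorem \ref{thm:higman}): as $A$ is finite, after passing to a subsequence I may assume $\mathfrak w_i$ is a scattered subword of $\mathfrak w_{i+1}$, and because the lengths increase this subword is \emph{proper}. Applying the strict half of Remark \ref{rem:premonoids}\ref{rem:premonoids(2)} to the proper scattered subword $\mathfrak w_1$ of $\mathfrak w_2$ --- all letters being $\preceq$-non-units and $\mathcal{K}$ strongly positive --- yields $\pi_K(\mathfrak w_1) \prec \pi_K(\mathfrak w_2)$, contradicting that both products are $\preceq$-equivalent to $x$. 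Hence the lengths are bounded, $\mathcal{H}$ is BF-factorable and therefore FF-factorable, i.e. FF-atomic. The main obstacle, and the step I would be most careful about, is precisely the non-commutativity issue in the previous paragraph: the whole argument hinges on choosing the $A$-words $\mathfrak w_j$ so that their products stay $\preceq$-equivalent to $x$, which is guaranteed only by the order-preserving (position-wise) substitution and \emph{not} by bare $\sqeq_\mathcal{H}$-equivalence, which may permute $\preceq$-equivalent letters and thereby alter the product.
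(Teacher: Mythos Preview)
Your proof is correct and follows essentially the same route as the paper's own argument: reduce FF to BF via loftness and Theorem \ref{thm:loft-BmF-is-FmF}\ref{thm:loft-BmF-is-FmF(i)}, pass to the germ to obtain a finite set $A$ of atoms with $\mathscr{I}(\mathcal K)\subseteq \mathcal K^\times A\,\mathcal K^\times$, replace each factorization position-wise by an $A$-word whose product stays $\preceq$-equivalent to $x$, and then use Higman's lemma together with the strict inequality from strong positivity to derive a contradiction. The paper does exactly this (it phrases the germ step as ``the same argument used to prove loftness in Theorem \ref{thm: sufficient for loftness}''), and your emphasis on the order-preserving substitution is precisely the point the paper encodes by requiring $\mathfrak a[j]\approx\mathfrak b[j]$ for every $j$.
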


\begin{proof}
Let $\mathcal H = (H, \preceq)$ be a weakly l.f.g.u.~strongly positive monoid and write $\approx$ for the relation of $\preceq$-equivalence. By Remark \ref{rem:premonoids}\ref{rem:premonoids(2)}, every $\preceq$-irreducible is a $\preceq$-atom; and by Theorem \ref{thm: sufficient for loftness}, $\mathcal H$ is then loft and FmF-atomic (recall from Remark \ref{rem:premonoids}\ref{rem:premonoids(5)} that every strongly positive monoid is weakly positive). 

Suppose for a contradiction that $\mathcal H$ is not FF-atomic. 
Then, by Theorem \ref{thm:loft-BmF-is-FmF}\ref{thm:loft-BmF-is-FmF(i)}, $\mathcal H$ is not BF-atomic. So, there is a sequence $\mathfrak a_1, \mathfrak a_2, \ldots$ of non-empty atomic $\preceq$-factorizations of a certain $\preceq$-non-unit $x \in H$ with $\|\mathfrak a_i\|_H < \|\mathfrak a_{i+1}\|_H$ for each $i \in \mathbb N^+$. On the other hand, the same argument used to prove loftness in Theorem \ref{thm: sufficient for loftness} shows that there is a finite set $A_x \subseteq \allowbreak \mathscr I_x(\mathcal H) = \allowbreak \mathscr A_x(\mathcal H)$ such that every $\preceq$-factorization $\mathfrak a$ of $x$ is $\sqeq_\mathcal{H}$-equivalent to an $A_x$-word $\mathfrak b$ with $\|\mathfrak a\|_H = \|\mathfrak b\|_H$ and $\mathfrak a[j]\approx \mathfrak b[j]$ for all $j \in \allowbreak \llb 1, \|\mathfrak a\|_H \rrb$. Therefore, for each $i \in \mathbb N^+$, there exists an $A_x$-word $\mathfrak b_i$ of the same length as $\mathfrak a_i$ such that $\mathfrak a_i[j] \approx \allowbreak \mathfrak b_i[j]$ for all $j \in \llb 1, \allowbreak \|\mathfrak a_i\|_H \rrb$, which shows that $x = \pi_H(\mathfrak a_i) \approx \pi_H(\mathfrak b_i)$ by Remark \ref{rem:premonoids}\ref{rem:premonoids(2)} and the fact that $\mathcal H$ is a preordered monoid. But the finiteness of $A_x$ implies, by Higman's lemma, that ${\mathfrak b}_i$ is a scattered subword of $\mathfrak b_j$ for some $i, j \in \mathbb N^+$ with $i < j$ and hence $\|{\mathfrak b}_i\|_H < \|{\mathfrak b}_j\|_H$. Since $\mathcal H$ is strongly positive, this however means by Remark \ref{rem:premonoids}\ref{rem:premonoids(2)} that $x \approx \allowbreak \pi_H({\mathfrak b}_i) \prec \allowbreak \pi_H({\mathfrak b}_j) \approx \allowbreak x$, which is impossible.
\end{proof}

\begin{remark}\label{rem:4.14}
As a complement to Corollary \ref{cor:weakly-lfgu-strongly-pos-is-FF-atom}, note that the premonoid $\mathcal H = (H, \preceq)$ in Remark \ref{rem:atoms-irreducibles}\ref{rem:atoms-irreducibles(3)} (where $H$ is the additive monoid of the non-negative integers) is positive, weakly l.f.g.u, and UF-atomic (and hence FF-atomic), in spite of it not being strongly positive: Every positive integer $n$ can be uniquely written as the sum of $n$ ones and $1$ is the only $\preceq$-atom of $H$. On the other hand, it is clear that $\mathcal H$ is FF-factorable but not UF-factorable, since there are finitely many ways to write a positive integer as a sum of positive integers and every positive integer is, in fact, a $\preceq$-irreducible.
\end{remark}

Strongly positive monoids abound in nature: Some of them have already been discussed in items \ref{exa:preord-mons(2)} and \ref{exa:preord-mons(3)} of Examples \ref{exa:preord-mons}, and a few more can be found in \cite{Tr15}. The range of application of Corollary \ref{cor:weakly-lfgu-strongly-pos-is-FF-atom} is therefore wide.

\section{Back to the classical theory}
\label{sec:duo-mons}
Below, we discuss some applications of the main results from the previous sections to premonoids of the form $(H, \mid_H)$ in which $H$ is a Dedekind-finite monoid and hence to the classical theory of factorizations. For, it is perhaps worth recalling from Definition \ref{def:factorizations}\ref{def:factorizations(6)} that we denote by $\mathscr{I}(H)$ the set of ir\-red\-u\-ci\-bles (that is, $\mid_H$-ir\-red\-u\-ci\-bles) and by $\mathscr{A}(H)$ the set of $\mid_H$-atoms; and from Remark \ref{rem:preorders}\ref{rem:preorders(1)} that $\mathscr{A}(H)$ coincides with the set of (ordinary) atoms of $H$ whenever $H$ is Dedekind-finite.

\begin{theorem}\label{thm:Dedekind-finite wlfgu FmF-atomic}
Every Dedekind-finite weakly l.f.g.u.~monoid $H$ is factorable. If, in addition, every ir\-red\-u\-ci\-ble of $H$ is an (ordinary) atom, then $H$ is FmF-atomic.
\end{theorem}

\begin{proof}
By Definition \ref{def:lfgu}\ref{def:lfgu(4)}, $H$ is a weakly l.f.g.u.~monoid if and only if $(H, \mid_H)$ is a weakly l.f.g.u.~premonoid. On the other hand, we have from Example \ref{exa:preord-mons}\ref{exa:preord-mons(1)} and the Dedekind-finiteness of $H$ that $(H, \mid_H)$ is a weakly positive monoid. So, $H$ is a factorable monoid by Corollary \ref{cor:wlfgu weakly positive is factorable}. If, in addition, $\mathscr{I}(H) = \mathscr{A}(H)$, then the hypotheses of Theorem \ref{thm: sufficient for loftness} are satisfied and hence $H$ is FmF-atomic.
\end{proof}

\begin{corollary}\label{cor:acyclic-is-FmF-atomic}
Every acyclic weakly l.f.g.u.~monoid is FmF-atomic.
\end{corollary}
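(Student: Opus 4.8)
The plan is to derive the corollary directly from Theorem \ref{thm:Dedekind-finite wlfgu FmF-atomic}. Let $H$ be an acyclic weakly l.f.g.u.\ monoid. To invoke the second assertion of that theorem I need to verify its two standing hypotheses: that $H$ is Dedekind-finite, and that every irreducible of $H$ is an (ordinary) atom. The weak l.f.g.u.\ assumption is already in force, so the entire argument reduces to extracting these two consequences from acyclicity.

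For Dedekind-finiteness I would argue straight from the definition of acyclicity ($uxv \neq x$ whenever $u$ or $v$ is a non-unit). Suppose toward a contradiction that $a, b \in H \setminus H^\times$ while $ab = w \in H^\times$. Specializing the acyclicity condition to $x = 1_H$, $u = a$, and $v = bw^{-1}$ yields $uxv = a(bw^{-1}) = (ab)w^{-1} = ww^{-1} = 1_H = x$, which is forbidden because $u = a$ is a non-unit. Hence the product of any two non-units is a non-unit, i.e.\ $H$ is Dedekind-finite. (Alternatively, one may recall from Example \ref{exa:irrds-atoms-quarks}\ref{exa:irrds-atoms-quarks(1)} that an acyclic monoid is unit-cancellative and then check that unit-cancellativity already forces Dedekind-finiteness; the direct route above is shorter.)

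For the coincidence of irreducibles and atoms I would simply appeal to the observation recorded in Example \ref{exa:irrds-atoms-quarks}\ref{exa:irrds-atoms-quarks(1)}: when $H$ is acyclic, the $\mid_H$-irreducibles, the $\mid_H$-atoms, the $\mid_H$-quarks, and the ordinary atoms all coincide, so in particular $\mathscr I(H) = \mathscr A(H)$ (and, by Remark \ref{rem:preorders}\ref{rem:preorders(1)}, the $\mid_H$-atoms are the ordinary atoms once Dedekind-finiteness is known). With $H$ now established to be Dedekind-finite, weakly l.f.g.u., and to satisfy $\mathscr I(H) = \mathscr A(H)$, the second part of Theorem \ref{thm:Dedekind-finite wlfgu FmF-atomic} gives at once that $H$ is FmF-atomic, which finishes the proof. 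I do not expect any serious obstacle here: the corollary is essentially a repackaging of acyclicity into the hypotheses of the preceding theorem, and the only point demanding an explicit line of verification is the passage from acyclicity to Dedekind-finiteness carried out above.
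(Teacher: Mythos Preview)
Your proposal is correct and follows essentially the same route as the paper: reduce to Theorem \ref{thm:Dedekind-finite wlfgu FmF-atomic} by checking Dedekind-finiteness and $\mathscr I(H)=\mathscr A(H)$, the latter via Example \ref{exa:irrds-atoms-quarks}\ref{exa:irrds-atoms-quarks(1)}. The only difference is cosmetic: the paper derives Dedekind-finiteness through the chain acyclic $\Rightarrow$ unit-cancellative $\Rightarrow$ Dedekind-finite (citing \cite[Proposition~2.30]{Fa-Tr18}), whereas you give a direct one-line argument from the definition of acyclicity; you even note the paper's route as an alternative, so the two proofs are interchangeable.
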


\begin{proof}
Every acyclic monoid is unit-cancellative and hence, by \cite[Proposition~2.30]{Fa-Tr18}, Dedekind-finite. Moreover, we noted in Example \ref{exa:irrds-atoms-quarks}\ref{exa:irrds-atoms-quarks(1)} that the irreducibles of an acyclic monoid are no different than the (ordinary) atoms. So, the statement follows from Theorem \ref{thm:Dedekind-finite wlfgu FmF-atomic}. 
\end{proof} 

In particular, Corollary \ref{cor:acyclic-is-FmF-atomic} leads to the following generalization of \cite[Proposition 2.7.8.4]{GeHK04} and \cite[Proposition 3.4]{FGKT}, where it is \emph{essentially} proved (though in a different terminology) that every commutative, unit-cancellative, l.f.g.u.~monoid is FF-atomic.

\begin{corollary}\label{cor:comm-unit.canc-weakly.lfgu-is-FFatom}
Every unit-cancellative, weakly l.f.g.u., commutative monoid is FF-atomic.
\end{corollary}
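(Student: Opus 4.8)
The plan is to read the statement off from Corollary \ref{cor:weakly-lfgu-strongly-pos-is-FF-atom} once the premonoid $\mathcal H := (H, \mid_H)$ is recognized as strongly positive. By Definition \ref{def:factorizations}\ref{def:factorizations(6)}, the monoid $H$ is FF-atomic precisely when $\mathcal H$ is FF-atomic, so it suffices to check the hypotheses of that corollary for $\mathcal H$. First I would recall that every commutative monoid is duo, so Example \ref{exa:preord-mons}\ref{exa:preord-mons(2)} gives that $\mathcal H$ is a preordered (hence positive) monoid; and since $H$ is in addition commutative and unit-cancellative, the same item of Example \ref{exa:preord-mons} yields that $\mathcal H$ is strongly preordered, and therefore strongly positive. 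On the other hand, by Definition \ref{def:lfgu}\ref{def:lfgu(4)} the assumption that $H$ is weakly l.f.g.u.\ is exactly the assertion that $\mathcal H$ is a weakly l.f.g.u.\ premonoid. Applying Corollary \ref{cor:weakly-lfgu-strongly-pos-is-FF-atom} to the weakly l.f.g.u.\ strongly positive monoid $\mathcal H$ then shows that $\mathcal H$, and hence $H$, is FF-atomic.

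For the record, and to justify the cross-reference made in Remark \ref{rem:diagram}, I would also note that in this setting every $\mid_H$-factorization is in fact a minimal atomic $\mid_H$-factorization. Indeed, strong positivity forces every $\mid_H$-irreducible to be a $\mid_H$-atom by Remark \ref{rem:premonoids}\ref{rem:premonoids(2)}, so each $\mid_H$-factorization is atomic. If some atomic factorization $a_1 \ast \cdots \ast a_n$ of an element $x$ were not $\sqeq_\mathcal H$-minimal, there would be a factorization $\mathfrak b \sqneq_\mathcal H a_1 \ast \cdots \ast a_n$, which (by the definition of $\sqeq_\mathcal H$ and the fact that a strict inequality decreases length) is strictly shorter and whose letters are, up to associates, a proper sub-multiset of $\{a_1, \ldots, a_n\}$. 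Writing $x$ in the two ways, cancelling the common part, and using commutativity together with unit-cancellativity in the form $yz = y \Rightarrow z \in H^\times$ would then force the complementary, non-empty product of atoms to be a unit; but in a commutative monoid a divisor of a unit is a unit, so one of the $a_i$ would be a unit, a contradiction. Hence $\mathcal Z_\mathcal H(x) = \mathcal Z_\mathcal H^{\mathsf m}(x\,;\mathscr A(\mathcal H))$ for every $\preceq$-non-unit $x$.

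The argument presents no real obstacle. The only point requiring genuine care is the verification that $\mathcal H$ is strongly positive, and this has already been established (and left to the reader) in Example \ref{exa:preord-mons}\ref{exa:preord-mons(2)}; once that input is granted, FF-atomicity is immediate from Corollary \ref{cor:weakly-lfgu-strongly-pos-is-FF-atom}, while the supplementary observation that every factorization is minimal atomic is an elementary consequence of unit-cancellativity and commutativity.
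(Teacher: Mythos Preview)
Your main argument is correct but follows a different path from the paper. The paper first observes that a commutative unit-cancellative monoid is acyclic, applies Corollary~\ref{cor:acyclic-is-FmF-atomic} to obtain FmF-atomicity, and then upgrades this to FF-atomicity by invoking \cite[Proposition~4.7(v)]{An-Tr18} to the effect that every $\mid_H$-factorization is already a minimal atomic $\mid_H$-factorization. You instead use Example~\ref{exa:preord-mons}\ref{exa:preord-mons(2)} to recognise $(H,\mid_H)$ as a strongly positive monoid and then appeal directly to Corollary~\ref{cor:weakly-lfgu-strongly-pos-is-FF-atom}. Your route is shorter and stays entirely within the paper, avoiding the external citation; the paper's route, on the other hand, illustrates the FmF-to-FF upgrade mechanism and makes the corollary appear as a specialisation of the acyclic result.

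Your supplementary paragraph recovering that every $\mid_H$-factorization is minimal atomic is essentially the content the paper outsources to \cite{An-Tr18}, and your argument is sound; just note that the phrase ``cancelling the common part'' is slightly misleading in a monoid that is only unit-cancellative. What you actually do (and should say) is: writing $y$ for the product of the $a_{\sigma(i)}$ and $z$ for the complementary product of atoms, commutativity gives $yz = uy$ for some unit $u$, hence $y(u^{-1}z)=y$, and unit-cancellativity forces $u^{-1}z\in H^\times$; since $H$ is Dedekind-finite this contradicts $z$ being a non-empty product of non-units. No genuine cancellation of non-units is needed.
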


\begin{proof}
It is obvious that a commutative monoid is acyclic if and only if it is unit-cancellative (cf.~Example \ref{exa:acyclic-monoids}\ref{exa:acyclic-monoids(4)}). On the other hand, if $H$ is a unit-cancellative commutative monoid, then we have from Example \ref{exa:irrds-atoms-quarks}\ref{exa:irrds-atoms-quarks(1)} that an irreducible of $H$ is an (ordinary) atom and hence from \cite[Proposition 4.7(v)]{An-Tr18} that every $\mid_H$-factorization is actually a minimal atomic $\mid_H$-factorization. Stitching the pieces together, we thus see from Corollary \ref{cor:acyclic-is-FmF-atomic} that a unit-cancellative, weakly l.f.g.u., commutative monoid is FF-atomic. 
\end{proof}

Further examples of acyclic monoids to which one can apply Corollary \ref{cor:acyclic-is-FmF-atomic} are listed below:

\begin{examples}\label{exa:acyclic-monoids}
\begin{enumerate*}[label=\textup{(\arabic{*})}, mode=unboxed]
\item\label{exa:acyclic-monoids(1)} Let $f \colon H \to K$ be a monoid homomorphism with $f^{-1}(K^\times)\subseteq H^\times$ and $K$ acyclic. If $x = uxv$ for some $u, v, x \in H$, then $f(u), f(v) \in K^\times$ and hence $u, v \in H^\times$, i.e., $H$ is itself acyclic. Suppose, on the other hand, that $K$ is also commutative (and hence unit-cancellative). \\ 
    
\indent{}Given a non-unit $x \in H$, we claim that every word in the set $\pi_H^{-1}(x) \cap \mathscr F(H \setminus H^\times)$ is $\sqeq_\mathcal{H}$-minimal, where $\mathcal{H} := (H, \mid_H)$. For, assume to the contrary that $\mathfrak b \sqneq_\mathcal{H} \mathfrak a$ for some words $\mathfrak a, \mathfrak b \in \mathscr F(H \setminus H^\times)$ such that $\pi_H(\mathfrak a) = \pi_H(\mathfrak b)=x$ and, to ease notation, put $n := \|\mathfrak a\|_H$ and $k := \|\mathfrak b\|_H$. Then $1 \le k < n$ and there is an injective function $\sigma \colon \llb  1, k \rrb \to \llb 1, n \rrb$ such that $\mathfrak b[i]$ is $\mid_H$-equivalent to  $\mathfrak a[\sigma(i)]$ for every $i \in \llb 1, k \rrb$. Since $H$ is acyclic (from the above), this means that $\mathfrak b[i] \in H^\times \mathfrak a[\sigma(i)] H^\times$ for each $i \in \llb 1, k \rrb$; and since $K$ is commutative and monoid homomorphisms send units to units, there is then $u \in H^\times$ such that
\[
f(\mathfrak a[1]) \cdots f(\mathfrak a[n]) = f(\pi_H(\mathfrak a)) = f(x) = f(\pi_H(\mathfrak b)) = f(\mathfrak b[1]) \cdots f(\mathfrak b[k])=u f(\mathfrak a[\sigma(1)]) \cdots f(\mathfrak a[\sigma (k)]). 
\]
By unit-cancellativity of $K$, we thus find that $f(\mathfrak a[j]) \in \allowbreak K^\times$ for some $j \in \llb 1, n \rrb \setminus \{\sigma(1), \ldots, \sigma(k)\}$ (recall that $k < n$), which is impossible because $\mathfrak a[j]$ is a non-unit of $H$ and $f$ maps non-units to non-units.
\end{enumerate*}

\vskip 0.05cm

\begin{enumerate*}[label=\textup{(\arabic{*})}, mode=unboxed, resume]
\item\label{exa:acyclic-monoids(2)} In the notation of Example \ref{exa:lfgu-premonoids}\ref{exa:lfgu-premonoids(4)}, the function $f \colon \mathcal M_n(R) \to R \colon A \mapsto \det A$ yields a monoid homomorphism from $H$ to the monoid $K$ of non-zero elements of $R$. Since $f^{-1}(K^\times) \subseteq H^\times$ and $K$ is commutative and cancellative, it then follows from Corollary \ref{cor:acyclic-is-FmF-atomic} and item \ref{exa:acyclic-monoids(1)} that $H$ is FF-atomic (recall the diagram in Remark \ref{rem:diagram}\ref{rem:diagram(2)} and cf.~the block ``Matrix rings'' on p.~531 of \cite{Ba-Sm15}, where Baeth and Smertnig show that $f$ is a transfer homomorphism as per \cite[Definition 2.1(1)]{Ba-Sm15} and hence $H$ is HF-atomic).
\end{enumerate*}

\vskip 0.05cm

\begin{enumerate*}[label=\textup{(\arabic{*})}, mode=unboxed, resume]
\item\label{exa:acyclic-monoids(3)} Let $\mathcal H = (H, \preceq)$ be a strongly positive monoid with $\mathcal{H}^\times=\{1_H\}$, and pick $u, v, x \in H$. If $1_H \prec u$ or $1_H \prec v$, then we get from Remark \ref{rem:premonoids}\ref{rem:premonoids(2)} that $x \prec uxv$, which is enough to conclude that $H$ is acyclic when considering that $1_H \preceq H$ (and the only $\preceq$-unit of $H$ is the identity).
\end{enumerate*}

\vskip 0.05cm

\begin{enumerate*}[label=\textup{(\arabic{*})}, mode=unboxed, resume]
\item\label{exa:acyclic-monoids(4)} We have already observed in Example \ref{exa:irrds-atoms-quarks}\ref{exa:irrds-atoms-quarks(1)} that, in general, an acyclic monoid is unit-cancellative but not the other way around. Assume, however, that $H$ is a unit-cancellative duo monoid (see Example \ref{exa:preord-mons}\ref{exa:preord-mons(2)} for the terminology). If $x = uxv$ for some $u, v, x \in H$, then $x = \allowbreak xu'v = uv'x$ for certain $u', v' \in H$ (since $H$ is duo) and hence $u, v \in H^\times$ (because $H$ unit-cancellative and unit-cancellative monoids are Dedekind-finite): This shows that $H$ is acyclic. 
\end{enumerate*}

\vskip 0.05cm

\begin{enumerate*}[label=\textup{(\arabic{*})}, mode=unboxed, resume]
\item\label{exa:acyclic-monoids(5)}
We have from \cite[Corollary 4.6]{Tr20(c)} that a unit-cancellative monoid $H$ satisfying both the ACC on principal left ideals and the ACC on principal right ideals is acyclic and satisfies the ACC on principal two-sided ideals (cf.~Remark \ref{rem:ACCP} and references therein).
\end{enumerate*}

\vskip 0.05cm

\begin{enumerate*}[label=\textup{(\arabic{*})}, mode=unboxed, resume]
\item\label{exa:acyclic-monoids(6)}
A Dedekind-finite BF-factorable monoid $H$ is acyclic. For, assume to the contrary that there exist $u, v, x \in H$ with $u \notin H^\times$ (resp., $v \notin H^\times$) such that $uxv = \allowbreak x$. Then a routine induction shows that $u^n x v^n = \allowbreak x$ for all $n \in \mathbb N$; and since $H$ is Dedekind-finite, we find that $x$ is a non-unit (or else $uxv \in \allowbreak H^\times$ and therefore $u, v \in H^\times$) and hence so are $u^n$ and $x v^n$ (resp., $u^n x$ and $v^n$). Considering that $H$ is factorable (and each non-unit is thus a product of irreducibles), it follows that $x = u^n xv^n$ has a fac\-tor\-i\-za\-tion into $n+1$ or more irreducibles for every $n \in \mathbb N^+$. To wit, $H$ is not BF-factorable (absurd).
\end{enumerate*}
\end{examples}

\begin{remark} 
The condition that the irreducibles of a Dedekind-finite weakly l.f.g.u.~monoid $H$ are (ordinary) atoms, is sufficient but not necessary for $H$ to be FmF-factorable. 

In fact, we know from Example \ref{exa:lfgu-premonoids}\ref{exa:lfgu-premonoids(2)} that the reduced power monoid $\mathcal P_{\mathrm{fin},1}(M)$ of a monoid $M$ is loft and weakly l.f.g.u., and becomes a weakly positive monoid under the divisibility preorder. It thus follows from Theorems \ref{thm:fgu weakly positive} and  \ref{thm:loft-BmF-is-FmF}\ref{thm:loft-BmF-is-FmF(ii)} applied to $(\mathcal{P}_{{\rm fin},1}(M),\mid_{\mathcal{P}_{{\rm fin},1}(M)})$ that $\mathcal{P}_{{\rm fin},1}(M)$ is an FmF-fac\-tor\-able monoid (which, by the way, complements the conclusions of \cite[Theorem 4.13]{An-Tr18} and \cite[Sect.~4.2]{Tr19a} on the atomicity of reduced power monoids). Yet, we noted in the same Example \ref{exa:lfgu-premonoids}\ref{exa:lfgu-premonoids(2)} that $\mathcal{P}_{{\rm fin},1}(M)$ is a Dedekind-finite, weakly l.f.g.u.~monoid whose irreducibles are not in general atoms.
\end{remark}

Next, we construct an FmF-atomic, f.g., reduced monoid $H$ that (i) has the property that every ir\-red\-u\-ci\-ble is an atom, and (ii) does not satisfy the ACCP (Remark \ref{rem:ACCP}) and hence is not BF-atomic (to the contrary of what happens in the classical case with cancellative, f.g., commutative monoids \cite[Proposition 2.7.8.4]{GeHK06}).

\begin{example}
\label{exa:non-atomic-2-generator-1-relator-canc-mon}
Given a set $X$ and a (binary) relation $R$ on the free monoid $\mathscr F(X)$, we define $R^\sharp$ as the smallest monoid congruence on $\mathscr F(X)$ containing $R$.
This means that $\mathfrak u \equiv \mathfrak v \bmod R^\sharp$, for some $X$-words $\mathfrak u$ and $\mathfrak v$, if and only if there are $\mathfrak z_0,\, \mathfrak z_1, \, \ldots,\, \mathfrak z_n \in \mathscr F(X)$ with $\mathfrak z_0 = \mathfrak u$ and $\mathfrak z_n = \mathfrak v$ such that, for each $i \in \allowbreak \llb 0, n-1 \rrb$, there exist $X$-words $\mathfrak p_i$, $\mathfrak q_i$, $\mathfrak q_i^\prime$, and $\mathfrak r_i$ with the following properties:
\begin{center}
(i) either $\mathfrak q_i = \mathfrak q_i^\prime$, or $\mathfrak q_i \RR \mathfrak q_i^\prime$, or $\mathfrak q_i^\prime \RR \mathfrak q_i$; \hskip 1cm (ii) $\mathfrak z_i = \mathfrak p_i \ast \mathfrak q_i \ast \mathfrak r_i$ and $\mathfrak z_{i+1} = \mathfrak p_i \ast \mathfrak q_i^\prime \ast \mathfrak r_i$.
\end{center}
We denote by $\mathrm{Mon}\langle X \mid R \rangle$ the monoid obtained by taking the quotient of $\mathscr F(X)$ by the congruence $R^\sharp$,
write it multiplicatively, and call it a (\evid{monoid}) \evid{presentation}. 

Now, fix an integer $n \ge 2$ and let $H$ be the monoid defined by the presentation $\mathrm{Mon}\langle X \mid R \rangle$, where $X$ is the $2$-element set $\{x, y\}$ and $R := \{(x^{\ast n}, y * x^{\ast n} * y)\} \subseteq \mathscr F(X) \times \mathscr F(X)$. By \cite[Example 4.8]{Tr20(c)}, $H$ is an atomic, reduced, cancellative monoid that does not satisfy the ACCP and each of whose irreducibles is an atom. On the other hand, $H$ is f.g. So, we gather from Theorem \ref{thm:Dedekind-finite wlfgu FmF-atomic} that $H$ is FmF-atomic. Yet, $H$ is not BF-atomic, or else it would satisfy the ACCP by \cite[Theorem 2.28(iii) and Corollary 2.29]{Fa-Tr18}.
(Incidentally, the same presentation (with $n = 1$ or $2$) was considered by A.~Geroldinger in \cite[p.~969]{Ge16c}.) 
\end{example}

Example \ref{exa:non-atomic-2-generator-1-relator-canc-mon} shows not only that Theorem \ref{thm:Dedekind-finite wlfgu FmF-atomic} is, in a sense, sharp; but also that the results obtained in this paper make it possible to draw (non-trivial) arithmetic conclusions on the \emph{existence} of certain fac\-tor\-i\-za\-tions in cases where there is no obvious way of resorting to Theorem \ref{thm:abstract-factorization}. As we are about to see, things are different if we restrict ourselves to left (or right) duo monoids (Example \ref{exa:preord-mons}\ref{exa:preord-mons(2)}). 

\begin{lemma}\label{lem:pseudo-commutativity-in-duo-monoids}
Given a left duo monoid $H$, we have $
Hx_1H \cdots Hx_n H \subseteq \allowbreak Hx_{\sigma(1)} \cdots \allowbreak x_{\sigma(m)}$
for all $x_1, \allowbreak \ldots, \allowbreak x_n \in H$ and every \textup{(}strictly\textup{)} increasing function $\sigma \colon \llb 1, m \rrb \to \llb 1, n \rrb$.
\end{lemma}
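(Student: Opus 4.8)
The plan is to reduce the statement to two elementary facts about left duo monoids and then induct on $n$. The first fact, which I would record as a preliminary observation, is a \emph{product-absorption rule}: for every finite list $y_1, \dots, y_k \in H$ one has $y_1 \cdots y_k\, H \subseteq H\, y_1 \cdots y_k$. This follows by a short induction on $k$, the base case $k = 1$ being exactly the defining inclusion $y_1 H \subseteq H y_1$ of a left duo monoid, and the inductive step being the chain $y_1 (y_2 \cdots y_k H) \subseteq y_1 (H y_2 \cdots y_k) = (y_1 H) y_2 \cdots y_k \subseteq (H y_1) y_2 \cdots y_k$. An immediate consequence, which is what I will actually use, is that $H P H = H P$ for every product $P = y_1 \cdots y_k$, since $H P H \subseteq H (P H) \subseteq H (H P) = H P$ while the reverse inclusion is trivial.

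With this in hand, I would prove the lemma by induction on the number $n$ of factors $x_i$. When $n = 0$ the map $\sigma$ is empty, so $m = 0$ and both sides reduce to $H$. For the inductive step I would split into two cases according to whether the top index $n$ is one of the values $\sigma(1), \dots, \sigma(m)$. If $n$ is \emph{not} a value of $\sigma$, then $\sigma$ already takes all its values in $\llb 1, n-1 \rrb$ and stays strictly increasing there, so I may simply discard the last factor: since $H x_n H \subseteq H$, I get $H x_1 H \cdots x_{n-1} H\, x_n H \subseteq H x_1 H \cdots x_{n-1} H$, and the inductive hypothesis applied to $x_1, \dots, x_{n-1}$ with the same $\sigma$ closes this case.

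If instead $n$ is a value of $\sigma$, then necessarily $\sigma(m) = n$ because $\sigma$ is strictly increasing. Here I would restrict $\sigma$ to $\llb 1, m-1 \rrb$, apply the inductive hypothesis to $x_1, \dots, x_{n-1}$ to obtain $H x_1 H \cdots x_{n-1} H \subseteq H x_{\sigma(1)} \cdots x_{\sigma(m-1)}$, and then multiply on the right by $x_n H$. Setting $P := x_{\sigma(1)} \cdots x_{\sigma(m-1)} x_n = x_{\sigma(1)} \cdots x_{\sigma(m)}$, the absorption identity $H P H = H P$ yields $H x_{\sigma(1)} \cdots x_{\sigma(m-1)}\, x_n H = H P H = H P$, which is exactly the desired right-hand side.

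The only step that is not pure bookkeeping — and the one I expect to be the crux — is the removal of the trailing copy of $H$ in the second case: one must transport that right-hand factor all the way to the left across the retained word $x_{\sigma(1)} \cdots x_{\sigma(m)}$, and this is precisely what the preliminary product-absorption rule (an iteration of the left duo axiom) delivers. Everything else amounts to tracking indices and invoking the strict monotonicity of $\sigma$.
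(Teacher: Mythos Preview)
Your argument is correct. Both your proof and the paper's rest on the same key observation --- the iterated left-duo inclusion $y_1\cdots y_k\,H \subseteq H\,y_1\cdots y_k$, which lets one absorb a trailing copy of $H$ across a retained product word --- and both proceed by a routine induction once that is in hand. The only difference is organisational: the paper inducts on $m$ (the length of the selected subword), splitting the ambient product at position $\sigma(\mu)$ into two blocks and applying the hypothesis to the left block and the base case $m=1$ to the right; you induct on $n$ (the length of the ambient word), peeling off $x_n$ and branching on whether $n$ lies in the image of $\sigma$. Your setup has the mild advantage that the base case $n=0$ is vacuous and the absorption rule is isolated once as a preliminary, whereas the paper folds that rule into both its base case and its inductive step; conversely, the paper avoids a case distinction. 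Neither gains anything of substance over the other.
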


\begin{proof}
We proceed by induction on $m$. The base case $m = 1$ is trivial: For all $x_1, \ldots, x_n \in H$ and each $i \in \llb 1, n \rrb$, we have $x_iH \subseteq Hx_i$ (by the fact that $H$ is a left duo monoid) and hence 
\[
Hx_1H \cdots Hx_nH \subseteq Hx_i H \subseteq H^2 x_i = Hx_i.
\]
Now pick $\mu \in \mathbb N^+$, assume inductively that the claim holds for every integer $m$ between $1$ and $\mu$, and fix $x_1, \ldots, x_n \in H$ and an increasing function $\sigma \colon \llb 1, \mu+1 \rrb \to \llb 1, n \rrb$.
It is clear that 
$\sigma(\mu)$ is a positive integer (strictly) smaller than $n$. We can therefore consider the (well-defined) increasing func\-tion $
\llb 1, \mu \rrb \to \llb 1, \sigma(\mu) \rrb \colon i \mapsto \sigma(i)$ and derive from the inductive hypothesis that
\begin{equation*}
Hx_1H \cdots Hx_nH 
= (Hx_1H \cdots Hx_{\sigma(\mu)}H) (Hx_{\sigma(\mu)+1}H \cdots Hx_n) 
\subseteq (H x_{\sigma(1)} \cdots x_{\sigma(\mu)}) Hx_{\sigma(\mu+1)}.
\end{equation*}
So, using that $x_{\sigma(1)} \cdots x_{\sigma(\mu)} H \subseteq Hx_{\sigma(1)} \cdots x_{\sigma(\mu)}$, we conclude that
\[
Hx_1H \cdots Hx_nH \subseteq H^2 x_{\sigma(1)} \cdots x_{\sigma(\mu)} x_{\sigma(\mu+1)}  = H x_{\sigma(1)} \cdots x_{\sigma(\mu+1)},
\]
which, by induction on $m$, suffices to finish the proof (since $\mu$ was an arbitrary positive integer).
\end{proof}

For the next theorem we say after \cite[Sect.~2, p.~328]{Hi52} that a preset $(X, \preceq)$ satisfies the \evid{Erd\H{o}s-Rado condition} if, for every sequence $x_1, x_2, \ldots$ of elements of $X$, there exist $i, j \in \mathbb N^+$ with $i < j$ and $x_i \preceq x_j$ (we use ``preset'' as a shortening of ``preordered set''). 
We gather from \cite[Theorems 2.1 and 4.3]{Hi52} that, if $(X, \preceq)$ satisfies the Erd\H{o}s-Rado condition, then so does the preset $(\mathscr F(X), \preceq_X)$, where $\preceq_X$ is the preorder on (the carrier set of) the free monoid $\mathscr F(X)$ defined by $\mathfrak u \preceq_X \mathfrak v$ if and only if $\mathfrak u$ and $\mathfrak v$ are $X$-words for which there is a (strictly) increasing function $\sigma \colon \llb 1, \|\mathfrak u\|_X \rrb \to \llb 1, \|\mathfrak v\|_X \rrb$ such that $\mathfrak u[i] \preceq \allowbreak \mathfrak v[\sigma(i)]$ for each $i \in \llb 1, \|\mathfrak u\|_X \rrb$. This is a gen\-er\-al\-i\-za\-tion of Higman's lemma (i.e., Theorem \ref{thm:higman}), herein referred to as \emph{Higman's full lemma}.

\begin{theorem}\label{thm:lfgu-left-duo-satisfies-ACCP}
Every left duo, l.f.g.u.~monoid  satisfies the \textup{ACCP}.
\end{theorem}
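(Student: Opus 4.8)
The plan is to reduce, via Remark \ref{rem:ACCP}, to the artinianity of the divisibility preorder on a suitable divisor-closed submonoid, and then to contradict a hypothetical infinite strictly descending chain by combining the ``pseudo-commutativity'' of Lemma \ref{lem:pseudo-commutativity-in-duo-monoids} with Higman's lemma (Theorem \ref{thm:higman}). First I would recall from Remark \ref{rem:ACCP} that $H$ satisfies the \textup{ACCP} if and only if $\mid_H$ is artinian, and suppose toward a contradiction that there is a sequence $x_1, x_2, \ldots$ with $x_{i+1} \mid_H x_i$ but $x_i \nmid_H x_{i+1}$ for all $i$. A short argument shows that $x_1$ must be a $\mid_H$-non-unit (otherwise every $x_i$ would be $\mid_H$-equivalent to $1_H$, which is incompatible with $x_1 \nmid_H x_2$). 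Setting $K := \llb x_1 \rrb_H$, every $x_i$ divides $x_1$ and hence lies in $K$; by Remark \ref{rem:preorders}\ref{rem:preorders(3)} the restriction of $\mid_H$ to $K$ is $\mid_K$, so the chain is an infinite strictly $\mid_K$-descending sequence in $K$.

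Because $H$ is l.f.g.u., $\llb x_1 \rrb_\mathcal{H} = (K, \mid_K)$ is an f.g.u.\ premonoid, i.e.\ $K$ is an f.g.u.\ monoid; and a quick check (if $a, k \in K$ then $ak \in aH \subseteq Ha$, say $ak = ha$, and $h \mid_H ak \in K$ forces $h \in K$) shows that the divisor-closed submonoid $K$ of the left duo monoid $H$ is itself left duo. Thus the whole problem reduces to deriving a contradiction from an infinite strictly $\mid_K$-descending chain in a left duo, f.g.u.\ monoid $K$. Since $K$ is left duo, $(K, \mid_K)$ is a preordered monoid by Example \ref{exa:preord-mons}\ref{exa:preord-mons(2)}. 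I would then fix a finite $A \subseteq K$ with $K = \langle \mathcal K^\times A\, \mathcal K^\times \rangle_K$ and, for each $i$, write $x_i$ as a product of factors of the form $uav$ with $u, v \in \mathcal K^\times$ and $a \in A$, recording in order the $A$-letters occurring as a nonempty word $\mathfrak w_i \in \mathscr F(A)$ (nonempty since $x_i$ is a $\mid_K$-non-unit). Each $\mid_K$-unit is $\mid_K$-equivalent to $1_K$, and since $(K, \mid_K)$ is preordered, $\mid_K$-equivalence is multiplicative by Remark \ref{rem:premonoids}\ref{rem:premonoids(2)}; replacing every unit factor by $1_K$ therefore shows that $x_i$ is $\mid_K$-equivalent to $y_i := \pi_K(\mathfrak w_i)$.

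The crux is the claim that, in a left duo monoid, if $\mathfrak u$ is a scattered subword of $\mathfrak v$ then $\pi_K(\mathfrak u) \mid_K \pi_K(\mathfrak v)$: writing $\mathfrak v = a_1 \ast \cdots \ast a_n$ and $\mathfrak u = a_{\sigma(1)} \ast \cdots \ast a_{\sigma(m)}$ for a strictly increasing $\sigma$, Lemma \ref{lem:pseudo-commutativity-in-duo-monoids} gives $\pi_K(\mathfrak v) = a_1 \cdots a_n \in K a_1 K \cdots K a_n K \subseteq K\, a_{\sigma(1)} \cdots a_{\sigma(m)} = K\, \pi_K(\mathfrak u)$, whence $\pi_K(\mathfrak v) \in K\, \pi_K(\mathfrak u)\, K$, i.e.\ $\pi_K(\mathfrak u) \mid_K \pi_K(\mathfrak v)$. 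Applying Higman's lemma (Theorem \ref{thm:higman}) to the sequence $\mathfrak w_1, \mathfrak w_2, \ldots$ of $A$-words over the finite set $A$ yields indices $i < j$ with $\mathfrak w_i$ a scattered subword of $\mathfrak w_j$, so $y_i \mid_K y_j$ and hence $x_i \mid_K x_j$ (using $x_i$ is $\mid_K$-equivalent to $y_i$, $x_j$ to $y_j$, and transitivity of $\mid_K$). But $i < j$ forces $x_j \mid_K x_i$ together with $x_i \nmid_K x_j$, which is the desired contradiction.

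I expect the main obstacle to be precisely the passage from word-combinatorics to honest divisibility in $K$: that is, establishing the scattered-subword-implies-divisibility step through Lemma \ref{lem:pseudo-commutativity-in-duo-monoids} (which is where left-duoness is essential), and correctly absorbing the unit factors via $\mid_K$-equivalence so that Higman's lemma applies to the stripped words $\mathfrak w_i$ rather than to the $x_i$ themselves. The reduction to a single divisor-closed submonoid $K$ and the stability of ``left duo'' and ``f.g.u.'' under passing to $\llb x_1 \rrb_H$ are routine by comparison.
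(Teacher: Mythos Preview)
Your argument is correct and follows the same overall architecture as the paper's proof: reduce to the f.g.u.\ case via a divisor-closed submonoid (checking that left-duoness survives), write each $x_i$ through a fixed finite generating set, invoke Higman, and use Lemma~\ref{lem:pseudo-commutativity-in-duo-monoids} to turn a scattered-subword relation into a divisibility relation that contradicts strict descent.

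The one genuine difference is in how the unit factors are absorbed. The paper keeps the factors in $A' := H^\times A\, H^\times$ (a possibly infinite alphabet) and applies \emph{Higman's full lemma} to the preset $(A', \preceq)$ with $a \preceq b$ iff $b \in H^\times a\, H^\times$; the Erd\H{o}s--Rado condition holds because $A'$ has only finitely many $\preceq$-equivalence classes. You instead observe that, since $K$ is left duo, $(K, \mid_K)$ is a preordered monoid (Example~\ref{exa:preord-mons}\ref{exa:preord-mons(2)}), so $\mid_K$-equivalence is multiplicative (Remark~\ref{rem:premonoids}\ref{rem:premonoids(2)}); this lets you strip the units up front, replace each $x_i$ by a $\mid_K$-equivalent product $y_i$ of letters from the \emph{finite} alphabet $A$, and get away with the simpler finite-alphabet Higman's lemma (Theorem~\ref{thm:higman}). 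Your route is thus slightly more elementary on the combinatorial side, at the cost of using left-duoness twice (once for Lemma~\ref{lem:pseudo-commutativity-in-duo-monoids}, once for preorderedness), whereas the paper only needs left-duoness for Lemma~\ref{lem:pseudo-commutativity-in-duo-monoids}. One small point: you should make explicit that \emph{every} $x_i$ (not just $x_1$) is a $\mid_K$-non-unit---this follows at once since a $\mid_K$-unit has no strict $\mid_K$-predecessor, so no term of an infinite strictly descending chain can be a unit.
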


\begin{proof}
Let $H$ be a left duo, l.f.g.u.~monoid. By Remark \ref{rem:ACCP}, $H$ satisfies the \textup{ACCP} if and only if so does $\llb x \rrb_H$ for every $x \in H$. Moreover, every divisor-closed submonoid $K$ of $H$ is itself left duo: For all $a, b \in K$, we have $ab = ca$ for some $c \in H$ (because $aK \subseteq aH \subseteq Ha$), which yields $c \in K$ and hence $aK \subseteq Ka$ (because $K$ is divisor-closed in $H$ and we have $c \mid_H ab \in K$). So, there is no loss of generality in assuming (as we do) that $H$ is f.g.u.

Accordingly, let $a_1, \ldots, a_n$ be an enumeration of a non-empty finite $A \subseteq H$ such that $H = \langle A^\prime \rangle_H$, where $A^\prime := H^\times A\, H^\times$; and suppose for a contradiction that $H$ does not have the \textup{ACCP}, viz., there exists an infinite sequence $x_1, x_2, \ldots$ of non-units of $H$ that is (strictly) decreasing with respect to the divisibility preorder $\mid_H$. 
For each $k \in \mathbb N^+,$ there is then a non-empty $A'$-word $\mathfrak a_k$ 
such that $x_k = \pi_H(\mathfrak a_k)$, where $\pi_H$ is the factorization homomorphism of $H$;
and the finiteness of $A$ guarantees that the preset $(A', \preceq)$ satisfies the Erd\H{o}s-Rado condition, where $\preceq$ is the preorder on $A'$ defined by $a \preceq b$ if and only if $a \in A'$ and $b \in \allowbreak H^\times a H^\times$. By Higman's full lemma applied to $(A', \preceq)$, we thus find that there are $h, k \in \mathbb N^+$ with $h < k$ and a (strictly) increasing function $\sigma \colon \llb 1, m \rrb \to \llb 1, n \rrb$ such that $\mathfrak a_k[\sigma(i)] \in H^\times \mathfrak a_h[i] H^\times \subseteq H \mathfrak a_h[i] H$ for each $i \in \llb 1, m \rrb$, where $m := \|\mathfrak a_h\|_H$ and $n := \|\mathfrak a_k\|_H$. So, we get from Lemma \ref{lem:pseudo-commutativity-in-duo-monoids} (applied twice) that
\[
\begin{split}
\mathfrak a_k[1] \cdots \mathfrak a_k[n] & \in H \mathfrak a_k[\sigma(1)] \cdots \mathfrak a_k[\sigma(m)] \subseteq H \mathfrak a_h[1] H \cdots H \mathfrak a_h[m] H \subseteq H \mathfrak a_h[1] \cdots \mathfrak a_h[m],
\end{split}
\]
which means that $x_k = \pi_H(\mathfrak a_k) \in H \pi_H(\mathfrak a_h) = x_h$ and hence 
$x_h \mid_H x_k$. This is however impossible, as we have assumed that $x_1, x_2, \ldots$ is a $\mid_H$-decreasing sequence.
\end{proof}

\begin{corollary}\label{cor:left-duo-l.f.g.u.-is-factorable}
In a left duo, l.f.g.u.~monoid, every non-unit factors as a product of irreducibles.
\end{corollary}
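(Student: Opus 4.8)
The plan is to read this off the immediately preceding Theorem \ref{thm:lfgu-left-duo-satisfies-ACCP} together with the abstract factorization theorem (Theorem \ref{thm:abstract-factorization}); the only genuine subtlety is to match the $\mid_H$-non-units of the premonoid $(H,\mid_H)$ with the ordinary non-units of $H$.

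First I would invoke Theorem \ref{thm:lfgu-left-duo-satisfies-ACCP}: a left duo, l.f.g.u.~monoid $H$ satisfies the ACCP. By Remark \ref{rem:ACCP}, this is equivalent to the divisibility preorder $\mid_H$ being artinian, so that $H$ is a $\mid_H$-artinian monoid. Applying Theorem \ref{thm:abstract-factorization} to the premonoid $(H,\mid_H)$ with $s=2$, I then obtain that every $\mid_H$-non-unit factors as a (finite, non-empty) product of $\mid_H$-irreducibles of degree $2$, i.e.~of irreducibles of $H$ in the sense of Section \ref{sec:premonoids}.

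It remains to identify the ordinary non-units of $H$ with its $\mid_H$-non-units, and this is the place where left duo-ness (rather than merely l.f.g.u.) is used. Here I would observe that a left duo monoid is Dedekind-finite: if $ab = 1_H$, then $ab \in aH \subseteq Ha$, so $ca = 1_H$ for some $c \in H$, whence $b = (ca)b = c(ab) = c$ and therefore $ba = ca = 1_H$; thus every right-invertible element is invertible, which is one of the standard reformulations of Dedekind-finiteness. Granting this, Remark \ref{rem:preorders}\ref{rem:preorders(1)} gives that the $\mid_H$-units of $H$ are exactly the ordinary units in $H^\times$, so the $\mid_H$-non-units are precisely the ordinary non-units. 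Combining with the previous paragraph yields that every non-unit of $H$ factors as a product of irreducibles, as desired.

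I expect the only mildly delicate point to be this Dedekind-finiteness step and the attendant bookkeeping between the premonoid terminology ($\mid_H$-[non-]unit, $\mid_H$-irreducible) and the classical one; everything else is a direct appeal to the cited results. As a sanity check, the same conclusion can be reached without Theorem \ref{thm:abstract-factorization} by routing through Theorem \ref{thm:Dedekind-finite wlfgu FmF-atomic} (or Corollary \ref{cor:wlfgu weakly positive is factorable}), using Proposition \ref{pro:f.g.u. is l.f.g.u. is w.l.f.g.u.} to pass from l.f.g.u.~to weakly l.f.g.u.~and Example \ref{exa:preord-mons}\ref{exa:preord-mons(2)} to see that $(H,\mid_H)$ is a positive, hence weakly positive, monoid; the Dedekind-finiteness of left duo monoids is needed in this route as well, which reinforces that it is the real content beyond the preceding theorem.
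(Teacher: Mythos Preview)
Your proof is correct and follows essentially the same approach as the paper: invoke Theorem \ref{thm:lfgu-left-duo-satisfies-ACCP} to get the ACCP, translate this into $\mid_H$-artinianity via Remark \ref{rem:ACCP}, apply Theorem \ref{thm:abstract-factorization}, and then use the Dedekind-finiteness of left duo monoids together with Remark \ref{rem:preorders}\ref{rem:preorders(1)} to pass from $\mid_H$-non-units to ordinary non-units. The paper's Dedekind-finiteness argument is phrased marginally differently (from $1_H = uv$ it notes $1_H \in uH \subseteq Hu$, hence $u \in H^\times$), but it is the same computation you carry out.
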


\begin{proof}
By Theorem \ref{thm:abstract-factorization} and Remarks \ref{rem:preorders}\ref{rem:preorders(1)} and \ref{rem:ACCP}, every non-unit of a Dedekind-finite monoid satisfying the \textup{ACCP} factors as a product of irreducibles. On the other hand, it is easily checked that every left duo monoid $H$ is Dedekind-finite (if $1_H = uv$ for some $u, v \in H$, then $1_H \in Hu$ and hence $u \in H^\times$). So, the conclusion follows at once from Theorem \ref{thm:lfgu-left-duo-satisfies-ACCP}.
\end{proof}

\section*{Acknowledgments}
L.\,C.~was supported by the European Union's Horizon 2020 research and innovation programme under the Marie Sk\l{}odowska-Curie grant agreement No.~101021791. The same grant also financed S.\,T.'s visit at University of Graz in summer-fall 2022, during which the paper was written. The authors are both members of the National Group for Algebraic and Geometric Structures and their Applications (GNSAGA), a department of the Italian Mathematics Research Institute (INdAM). They are grateful to Victor Fadinger (University of Graz) for the construction in the last paragraph of Example \ref{exa:lfgu-premonoids}\ref{exa:lfgu-premonoids(3)}.

\nocite{*}


\begin{thebibliography}{99}
%
\bibitem{And97} D.\,D.~Anderson (ed.), \emph{Factorization in Integral Domains}, Lect.~Notes Pure Appl.~Math. \textbf{189}, Marcel Dekker, 1997.
%
\bibitem{AnAnZa92} D.\,D.~Anderson, D.\,F.~Anderson, and M.~Zafrullah, \emph{Atomic domains in which almost all atoms are prime}, Comm.~Algebra \textbf{20} (1992), 1447--1462.
%
\bibitem{AndMo90} D.\,D.~Anderson and J.\,L.~Mott, \emph{Cohen-Kaplansky Domains: Integral Domains with a Finite Number of Irreducible Elements}, J.~Algebra
\textbf{148} (1992), 17--41.
%
\bibitem{AnVL96} D.\,D.~Anderson and S.~Valdes-Leon, \emph{Factorization in Commutative Rings with Zero Divisors}, Rocky Mountain J.~Math.~\textbf{26} (1996), No.~2, 439--480.
%
\bibitem{An-Tr18} A.\,A.~Antoniou and S.~Tringali, \emph{On the Arithmetic of Power Monoids and Sumsets in Cyclic Groups}, Pacific J.~Math.~\textbf{312} (2021), No.~2, 279--308.
%
\bibitem{As-GaSa16} A.~Assi and P.\,A.~Garc\'{i}a-S\'anchez, \emph{Numerical Semigroups and Applications}, RSME Springer Series \textbf{1}, Springer, 2016.
%
\bibitem{BaBaGo14} D.~Bachman, N.\,R.~Baeth, and J.~Gossell, \emph{Factorizations of upper triangular matrices}, Linear Algebra Appl.~\textbf{450} (2014), 138--157.
%
\bibitem{Ba-Sm15} N.\,R.~Baeth and D.~Smertnig, \emph{Factorization theory: From commutative to noncommutative settings}, J.~Algebra \textbf{441} (2015), 475--551.
%
\bibitem{Bae-Sme21} N.\,R.~Baeth and D.~Smertnig, \emph{Lattices over Bass rings and graph agglomerations},  Algebr.~Represent.~Theory \textbf{25} (2022), 669--704.
%
\bibitem{BaWi13} N.\,R.~Baeth and R.~Wiegand, \emph{Factorization Theory and Decompositions of Modules}, Amer.~Math.~Monthly \textbf{120} (2013), No.~1, 3--34.
%
\bibitem{Be-Br-Na-Sm22} J.\,P.~Bell, K.~Brown, Z.~Nazemian, and D.~Smertnig, \emph{On noncommutative bounded factorization domains and prime rings}, J.~Algebra \textbf{622} (May 2023), 404--449.
%
\bibitem{Bie-Ger-22} P.-Y.~Bienvenu and A.~Geroldinger, \emph{On algebraic properties of power monoids of numerical monoids}, to appear in Israel J.~Math.~(\url{https://arxiv.org/abs/2205.00982}). 
%
\bibitem{Bl-GaSa-Ge11} V.~Blanco, P.\,A.~Garcia-Sanchez, and A.~Geroldinger, \emph{Semigroup theoretical characterizations of
arithmetical invariants with applications to numerical monoids and Krull monoids}, Illinois~J.~Math.~\textbf{55} (2011), 1385--1414.
%
\bibitem{Bouv74b} A.~Bouvier, \emph{Structure des anneaux \'a factorisation unique}, Pub.~D\'ept.~Math.~Lyon \textbf{11} (1974), No.~3, 39--49.
%
\bibitem{Cha-Fad-Win22} G.\,W.~Chang, V.~Fadinger, and D.~Windisch, \emph{Semigroup rings as weakly Krull domains}, Pacific J.~Math.~\textbf{318} (2022), No.~2, 433--452.
%
\bibitem{ChFoGeOb16} S.~Chapman, M.~Fontana, A.~Geroldinger, and B.~Olberding (eds.), \emph{Multiplicative Ideal Theory and Factorization Theory: Commutative and Non-Commutative Perspectives}, Springer Proc.~Math.~Stat.~\textbf{170}, Springer, 2016.
%
\bibitem{Ch-Ge97} S.\,T.~Chapman and A.~Geroldinger, ``Krull domains and monoids, their sets of lengths and associated combinatorial problems'', pp.~73--112 in \cite{And97}. 
%
\bibitem{Ch-Go-Go19} S.\,T.~Chapman, F.~Gotti, and M.~Gotti, \emph{When is a Puiseux monoid atomic?}, Amer.~Math.~Monthly \textbf{128} (2021), 302--321.
%
\bibitem{ChAnVLe11} S.~Chun, D.\,D.~Anderson, and S.~Valdes-Leon, \emph{Reduced factorizations in commutative rings}, Comm.~Algebra \textbf{39} (2011), No.~5, 1583--1594.
%
\bibitem{CoKa46} I.\,S.~Cohen and I.~Kaplansky, \emph{Rings with a finite number of primes, I}, Trans.~Amer.~Math.~Soc.~\textbf{60} (1946), 468--477.
%
\bibitem{Co63} P.\,M.~Cohn, \emph{Noncommutative unique factorization domains}, Trans.~Amer.~Math.~Soc.~\textbf{109} (1963), 313--331.
%
\bibitem{Co69} P.\,M.~Cohn, \emph{Torsion modules over free ideal rings}, Proc.~London Math.~Soc.~III.~Ser.~\textbf{17} (1967), 577--599.
%
\bibitem{Co06} P.\,M.~Cohn, \emph{Free Ideal Rings and Localization in General Rings}, New Math.~Monogr.~\textbf{3}, Cambridge Univ.~Press, 2006.
%
\bibitem{Co-Tr-21(a)} L.~Cossu and S.~Tringali, \emph{Abstract Factorization Theorems with Applications to Idempotent Factorizations}, to appear in Israel J.~Math.~(\url{https://arxiv.org/abs/2108.12379}). 
%
\bibitem{Di13} L.\,E.~Dickson, \emph{Finiteness of the Odd Perfect and Primitive Abundant Numbers with $n$ Distinct Prime Factors}, Amer.~J.~Math.~\textbf{35} (1913), No.~4, 413--422.
%
\bibitem{Da81} R.\,J.\,H.~Dawlings, \emph{Products of idempotents in the semigroup of singular endomorphisms of a finite dimensional vector space}, Proc.~Roy.~Soc.~Edinburgh Sect.~A \textbf{91} (1981), 123--133.
%
\bibitem{Er68} J.\,A.~Erdos, \emph{On products of i\-dem\-po\-tent matrices}, Glasg.~Math.~J.~\textbf{8} (1967), 118--122.
%
\bibitem{Fa02} A.~Facchini, \emph{Direct sum decomposition of modules, semilocal endomorphism rings, and Krull monoids}, J.~Algebra \textbf{256} (2002), No.~1, 280--307.
%
\bibitem{FaFa18} A.~Facchini and M.~Fassina, \emph{Factorization of elements in noncommutative rings, II}, Comm.~Algebra \textbf{46} (2018), No.~7, 2928--2946.
%
\bibitem{Fad-Fri-Win22} V.~Fadinger, S.~Frisch, and D.~Windisch, \emph{Integer-valued polynomials on discrete valuation rings of global fields with prescribed lengths of factorizations}, preprint (\url{https://arxiv.org/abs/2206.11003}). 
%
\bibitem{Fa-Wi22} V.~Fadinger and D.~Windisch, \emph{A characterization of weakly Krull monoid algebras}, J.~Algebra \textbf{590} (2022), 277--292.
%
\bibitem{Fad-Zho22} V.~Fadinger and Q.~Zhong, \emph{On product-one sequences over subsets of groups}, to appear in Period.~Math.~Hungar.~(\url{https://arxiv.org/abs/2012.04600}).
%
\bibitem{FGKT} Y.~Fan, A.~Geroldinger, F.~Kainrath, and S.~Tringali, \emph{Arithmetic of commutative semigroups with a focus on  semigroups  of ideals and modules}, J.~Algebra Appl.~\textbf{16} (2017), No.~12, 1750234.
%
\bibitem{Fa-Tr18} Y.~Fan and S.~Tringali, \emph{Power monoids: A bridge between Factorization Theory and Arithmetic Combinatorics}, J.~Algebra \textbf{512} (2018), 252--294.
%
\bibitem{Fl69} C.\,R.~Fletcher, \emph{Unique Factorization Rings}, Proc.~Camb.~Phil.~Soc.~\textbf{65} (1969), 579--583.
%
\bibitem{Fo91} J.~Fountain, \emph{Products of i\-dem\-po\-tent integer matrices}, Math.~Proc.~Camb.~Phil.~Soc.~\textbf{110} (1991), No.~3, 431--441.
%
\bibitem{Fr-Na-Ri19} S.~Frisch, S.~Nakato, and R.~Rissner, \emph{Sets of lengths of factorizations of integer-valued polynomials on Dedekind domains with finite residue fields}, J.~Algebra \textbf{528} (2019), 231--249.
%
\bibitem{Ge16c} A.~Geroldinger, \emph{Sets of lengths}, Amer.~Math.~Monthly \textbf{123} (2016), No.~10, 960--988.
%
\bibitem{Ge13} A.~Geroldinger, \emph{Non-commutative Krull monoids: A divisor theoretic approach and their arithmetic}, Osaka J.~Mathematics \textbf{50} (2013), 503--539.
%
\bibitem{Ge-Gr09} A.~Geroldinger and D.\,J.~Grynkiewicz, \emph{On the arithmetic of Krull monoids with finite Davenport constant}, J.~Algebra \textbf{321} (2009), 1256--1284.
%
\bibitem{GeHK04} A.~Geroldinger and F.~Halter-Koch, \emph{Congruence monoids}, Acta Arith.~\textbf{112} (2004), 263--296.
%
\bibitem{GeHK06} A.~Geroldinger and F.~Halter-Koch, \emph{Non-Unique Factorizations. Algebraic, Combinatorial and Analytic Theory}, Pure Appl.~Math.~\textbf{278}, Chapman \& Hall/CRC, Boca Raton (FL), 2006.
%
\bibitem{Ger-Kha22} A.~Geroldinger and M.\,A.~Khadam, \emph{On the arithmetic of monoids of ideals}, Arch.~Math.~(Basel) \textbf{60} (2022), 67--106.
%
\bibitem{Ge-Schm19} A.~Geroldinger and W.\,A.~Schmid, \emph{Which sets are sets of lengths in all numerical monoids?}, Banach Center Publ.~\textbf{118} (2019), 181--192. 
%
\bibitem{Ge-Schm18} A.~Geroldinger and W.\,A.~Schmid, \emph{A realization theorem for sets of lengths in numerical monoids}, Forum Math.~\textbf{30} (2018), 1111--1118.
%
\bibitem{Ge-Schw18} A.~Geroldinger and E.\,D.~Schwab, \emph{Sets of lengths in atomic unit-cancellative finitely presented monoids}, Colloq.~Math.~\textbf{151} (2018), No.~1, 171--187.
%
\bibitem{GeZh21} A.~Geroldinger and Q.~Zhong. \emph{A characterization of length-factorial Krull monoids}, New York J.~Math.~\textbf{27} (2021), 1347--1374.
%
\bibitem{Go18} F.~Gotti, \emph{Puiseux monoids and transfer homomorphisms}, 
J.~Algebra \textbf{516} (2018), 95--114.
%
\bibitem{Gra-War75} A.~Grams and H.~Warner, \emph{Irreducible divisors in domains of finite character}, Duke Math.~J.~\textbf{42} (1975), 271--284.
%
\bibitem{HK98} F.~Halter-Koch, \emph{Ideal Systems. An Introduction to Multiplicative Ideal Theory}, Monogr.~Textb.~Pure Appl.~Math.~\textbf{211}, Marcel Dekker, Inc., Basel, 1998.
%
\bibitem{Hi52} G.~Higman, \emph{Ordering by Divisibility in Abstract Algebras}, Proc.~London Math.~Soc.~(3), \textbf{2} (1952), No.~7, 326--336.
%
\bibitem{Ho95} J.\,M.~Howie, \emph{Fundamentals of Semigroup Theory}, London Math.~Soc.~Monogr.~Ser.~\textbf{12}, Oxford Univ.~Press, 1995.
%
\bibitem{Jac85} N.~Jacobson, \emph{Basic Algebra I}, Dover Publications, Inc., 2009 (reprint of the 1985 ed.~by Freeman and Co.).
%
\bibitem{Laf83} T.\,J.~Laffey, \emph{Products of idempotent matrices}, Linear Multilinear Algebra \textbf{14} (1983), 309--314.
%
\bibitem{Lin-Ret-Rol09} P.\,A.~Linnell, A.~Rhemtulla, and D.\,P.\,O.~Rolfsen, \emph{Discretely ordered groups}, Algebra \& Number Theory \textbf{3} (2009), No.~7, 797--807.
%
\bibitem{Mark04} G.~Marks, \emph{Duo rings and Ore extensions}, J.~Algebra \textbf{280} (2004), 463--471.
%
\bibitem{Rei14} A.~Reinhart, \emph{On monoids and domains whose monadic submonoids are Krull}, pp.~307--330 in: M.~Fontana, S.~Frisch, and S.~Glaz (eds.), \emph{Commutative Algebra. Recent Advances in Commutative Rings, Integer-Valued Polynomials, and Polynomial Functions}, Springer, 2014.
%
\bibitem{RoGaSaGaGa04} J.\,C.~Rosales, P.\,A.~Garc\'{\i}a-S\'{a}nchez, and J.\,I.~Garc\'{\i}a-Garc\'{\i}a, \emph{Atomic commutative monoids and their elasticity}, {Semigroup Forum} \textbf{68} (2004), No.~1, 64--86.
%
\bibitem{Tr15} S.~Tringali, \emph{Small doubling in ordered semigroups}, Semigroup Forum \textbf{90} (2015), 135--148.
%
\bibitem{Tr19a} S.~Tringali, \emph{Structural properties of subadditive families with applications to factorization theory}, Israel J.~Math.~\textbf{234}, (2019), No.~1, 1--35.
%
\bibitem{Tr20(c)} S.~Tringali, \emph{An abstract factorization theorem and some applications}, J.~Algebra \textbf{602} (July 2022), 352--380.
%
\bibitem{Wi-Wi09} R.~Wiegand and S.~Wiegand, ``Semigroups of Modules: A Survey'', pp.~335--349 in N.\,V.~Dung, F.~Guerriero, L.~Hammoudi, P.~Kanwar (eds.), \emph{Rings, Modules, and Representations}, Contemp.~Math.~\textbf{480}, Amer.~Math.~Soc., 2009.
%
\bibitem{Zh19} Q.~Zhong, \emph{On elasticities of locally finitely generated monoids}, J.~Algebra \textbf{534} (2019), 145--167. 
%
\end{thebibliography}
\end{document}